%% file: 2026-03-25_-_ArXiV_version.tex
\documentclass[a4paper, 11pt, reqno]{amsart}

\include{Packages}
\include{Commands}

\title[Low Regularity Potentials]{Low regularity potentials in heterogeneous Cahn--Hilliard functionals}

\author{Riccardo Cristoferi, Jakob Deutsch, Luca Pignatelli}

\DeclareGraphicsExtensions{.pdf}

\begin{document}

\begin{abstract}
In this paper, we study the prototypical model of liquid-liquid phase separation, the Cahn-Hilliard functional, in a highly irregular setting.
    Specifically, we analyze potentials with low regularity vanishing on space-dependent wells.
    Under remarkably weak hypotheses, we establish a robust compactness result.
    Strengthening the regularity of the wells and of the growth of the potential close to the wells only slightly, we completely characterize the asymptotic behavior of the associated family of functionals through a $\Gamma$-convergence analysis.
    As a notable technical result, we prove the existence of geodesics for a degenerate metric and establish a uniform bound on their Euclidean length.
\end{abstract}

\maketitle

\tableofcontents

\section{Introduction}

The Cahn--Hilliard functional is the prototypical model for liquid-liquid phase separation.
It was initially proposed by van der Waals in \cite{vanDW}, and later independently rediscovered by Cahn and Hilliard in \cite{CH}.

We start by describing the physical situation that we are dealing with.
We consider a physical system inside a container $\o \subset \R^N$.
We assume that the system is described by a phase (or order) parameter $u \colon \o \to \R^M$.
For instance, in the original derivation of the model, the system under investigation was a mixture of two fluids. In such a case, the phase parameter represents the density.
We are interested in describing stable equilibrium configurations of the system.
Using a variational point of view, these correspond to local minimizers of the Gibbs free energy.
Under \emph{homogeneous conditions}, and after a rescaling, this is given by a functional of the form
\[
F_\varepsilon(u) \coloneqq \int_\Omega \left[ \frac{1}{\varepsilon}W(u(x)) + \varepsilon|\nabla u(x)|^2 \right] \dd x.
\]
Here, $W\colon \R^M\to[0,\infty)$ is the free energy density.
Assuming for simplicity that the physical system has two stable phases, modeled by $a,b,\in\R^M$, we impose that the potential $W$ vanishes only at $a$ and $b$.
The second term of the functional penalizes sharp oscillations of the phase variable.
Finally, the parameter $\varepsilon>0$ is related to the scale of the transition region between bulk regions where the phase variable is close to the stable phases $a$ and $b$, and it is usually very small.

The mathematical study of the functional $F_\varepsilon$ was, and still is, a source of many deep and interesting problems.
In particular, here we focus on the study of its asymptotic behavior as $\varepsilon\to0$.
In the community of calculus of variations, such studies make use of $\Gamma$-convergence, the notion of variational convergence introduced in \cite{DGFra}, that allows to capture the asymptotic behavior of minimizers of the family of functionals $\{F_\varepsilon\}_{\varepsilon>0}$, as well as the asymptotic of the minimal energy.
A similar analysis can be performed also in the case where a \emph{mass constraint} is imposed. Namely, given $m\in\R^M$, we consider the problem
\begin{equation}\label{eq:min_pb_intro}
\min\left\{ F_\varepsilon(u) : u\in H^1(\Omega;\R^M), \, \int_\Omega u(x)\dd x = m  \right\}.
\end{equation}
Here, we recall some contributions, without claiming to be exhaustive.
We first focus on the case where the phase variable is scalar-valued, namely when $M=1$.
Gurtin (see \cite{Gur}) conjectured that in the limit as $\varepsilon\to0$, any family of minimizers of $\{F_\varepsilon\}_{\varepsilon>0}$ converge to a piecewise constant function that partitions $\Omega$ into two regions separated by an interface with minimal surface area.
This conjecture was proved rigorously 
by Carr, Gurtin, and Slemrod for $N = 1$ (see \cite{CarGurSle}), and by Modica (see \cite{modica87}) and by Sternberg (see \cite{Ste_Sing}) for $N \ge 2$ (see also \cite{ModMor_Limite, ModMor_Esempio}).

The case of vector-valued wells, namely $M\geq 1$, was considered in \cite{FonTar} by Fonseca and Tartar.
What the author proved is a compactness result with respect to the strong $L^1(\Omega;\R^M)$ topology for sequences of uniformly bounded energy, and that the $\Gamma$-limit of the sequence $\{F_\varepsilon\}_\varepsilon$ is given by
\[
F_0(u)\coloneqq \sigma\,\mathrm{Per}(\{u=a\};\Omega),
\]
for functions $u\in BV(\Omega;\R^M)$ taking values only in $\{a,b\}$, and $+\infty$ else in $L^1(\Omega;\R^M)$.
Here, $\mathrm{Per}(\{u=a\};\Omega)$ denotes the perimeter of the set $\{u=a\}$ in $\Omega$ (see Section \ref{sec:BV} for more details on sets of finite perimeter), and the constant $\sigma>0$ is given by
\begin{equation}\label{eq:sigma_intro}
\sigma\coloneqq \inf\left\{\int_{-1}^1 2\sqrt{W(\gamma(t))}|\gamma'(t)|\dd t \,:\, \gamma\in W^{1,1}([-1,1];\R^M), \gamma(-1)=a, \gamma(1)=b \right\}.
\end{equation}
Such a quantity represents the minimal energy needed to transition from one well to the other.
An optimal profile, namely a solution to the above minimization problem, approximates the behavior of a solution to \eqref{eq:min_pb_intro}.
Note that the minimization problem is a degenerate geodesic problem, where the degeneracy comes from the fact that the weight $\sqrt{W}$ vanishes at the endpoints of the curves.
In scalar case $M=1$, the change of variables $s=\gamma(t)$ gives that
\begin{equation}\label{eq:sigma_scalar_intro}
\sigma = \int_a^b 2\sqrt{W(s)}\dd s,
\end{equation}
for all admissible curves.

Since it is of particular interest for the goal of this paper, we remark that in \cite{FonTar} the authors assumed that the potential $W$ behaves quadratically near the wells $a$ and $b$.

There are several extensions and variants of the above mentioned results.
For a more comprehensive overview of such literature, the reader can consult the introduction of the paper \cite{cristoferi2025phaseseparationmultiplyperiodic}.\\

The study of non-homogeneous Cahn--Hilliard functionals initiated with the paper \cite{Bou} by Bouchitt\`{e}, where the author studied the fully coupled case with two space-dependent scalar wells.
Namely, for $u\in H^1(\Omega)$, Bouchitt\`{e} considered the functional
\[
G_\varepsilon(u) \coloneqq \frac{1}{\varepsilon}\int_\Omega g(x, u(x), \varepsilon\nabla u(x))\dd x,
\]
where $g:\Omega\times\R\times\R^N\to[0,\infty)$ is a continuous function with $g(x,u,0)=0$ if and only if $u\in\{a(x), b(x)\}$ for all $x\in\Omega$, and $p\mapsto g(x,u,p)$ is a convex function achieving its strict minimum at $p=0$, where it is also assumed to be differentiable.
The space-dependent wells $a,b:\Omega\to\R$ are assumed to be Lipschitz continuous.
The author identified the limiting functional in the sense of $\Gamma$-convergence. Since we do not want to enter into the technicalities needed in order to treat the general case, we consider here the special case
\[
g(x,u,p) = W(x,u) + |p|^2.
\]
In such a special case, the limiting functional writes as
\[
G_0(u)\coloneqq \int_{J_u\cap\Omega} \sigma(x; a(x), b(x))\dhno(x),
\]
for functions $u\in BV(\Omega)$ such that $u(x)\in\{a(x),b(x)\}$ for a.e. $x\in\Omega$, and $+\infty$ else in $L^1(\Omega)$.
Here, $J_u$ denotes the jump set of the function $u$ (see Section \ref{sec:BV}) for all $x\in\Omega$, and $p,q\in\R^M$, we set
\begin{equation}\label{eq:sigma_moving_scalar}
\sigma(x;p,q)\coloneqq \int_p^q 2\sqrt{W(x,s)}\dd s.
\end{equation}
Roughly speaking, the limiting energy density is space-dependent, and is given, at any fixed point $x\in \Omega$ by the same degenerate geodesic problem \eqref{eq:sigma_scalar_intro} where the space variable is frozen.

Then, in \cite{CriGra} the first author together with Gravina considered the case of multiple vector-valued wells for the uncoupled functional.
Namely, the authors considered the functional
\[
H_\varepsilon(u) \coloneqq \int_\Omega \left[ \frac{1}{\varepsilon}W(x,u(x)) + \varepsilon|\nabla u(x)|^2 \right] \dd x,
\]
for $u\in H^1(\Omega;\R^M)$, where $W(x,u)=0$ if and only if $u\in\{z_1(x),\dots,z_k(x)\}$ for all $x\in\Omega$.
Being the first result in that direction for vector-valued wells, strong assumptions were needed.
Indeed, the wells $z_1,\dots,z_k\colon\Omega\to\R^M$ were required to be Lipschitz continuous, and the potential $W\colon\Omega\times\R^M\to[0,\infty)$ to be Lipschitz continuous in the first variable, and of class $C^2$ in the second. Moreover, it was assumed that the potential was \emph{exactly} quadratic near the wells.
In such a case, the authors proved a compactness result, and a $\Gamma$-convergence result with respect to the strong $L^1(\Omega;\R^M)$ topology. In particular, it holds that the limiting functional is given by
\begin{equation}\label{eq:functional_H0}
H_0(u)\coloneqq \int_{J_u\cap\Omega} \sigma(x; a(x), b(x))\dhno(x),
\end{equation}
where
\begin{equation}\label{eq:sigma_intro_moving}
\sigma(x;p,q)\coloneqq \inf\left\{\int_{-1}^1 2\sqrt{W(x,\gamma(t))}|\gamma'(t)|\dd t : \gamma\in W^{1,1}([-1,1];\R^M), \gamma(-1)=p, \gamma(1)=q \right\}.
\end{equation}
Note that this is the same problem as in \eqref{eq:sigma_intro} where the space variable is frozen, similarly to what happens with \eqref{eq:sigma_moving_scalar} and \eqref{eq:sigma_scalar_intro}. 
The latter assumption has been weakened in the paper \cite{CriFonGan_moving_sub} by the first author, Ganedi and Fonseca, by requiring the potential to be \emph{controlled} from above and from below by two quadratic functions near the wells.
Moreover, the authors were able to also consider the case where the  wells and the potential are discontinuous on the boundary of a polyhedral partition of the domain $\Omega$.\\

We would like to remark that, in the case of scalar-valued wells, like in \cite{Bou}, there is no need to assume any particular behavior of the potential near the wells. The technical reason is that the optimal profile at a point $x$ will be a suitable reparametrization of the interval $[a(x), b(x)]$. In particular, using the continuity of the wells, the length of all of these intervals is uniformly bounded from above.
On the other hand, in the vector-valued case, the optimal profile is a \emph{curve} solving the minimization problem in \eqref{eq:sigma_intro}.
In such a case, the behavior of the potential $W$ around the wells is essential to ensure that such a curve has an \emph{Euclidean length} that is uniformly bounded from above. This requirement is needed in order to prove the $\Gamma$-convergence result.
This is the main difference from the technical point of view between the scalar and the vector-valued case.\\

The goal of the paper is to prove that it is possible to significantly weaken the requirements on the regularity of the potential and on its behavior close to the wells and still be able to obtain compactness and the $\Gamma$-limit result.

In particular, we consider the functional
\[
\mathcal{F}_\varepsilon(u) \coloneqq \int_\Omega \left[ \frac{1}{\varepsilon}W(x,u(x)) + \varepsilon|\nabla u(x)|^2 \right] \dd x,
\]
where, for each $x\in\Omega$, the function $u\mapsto W(x,u)$ vanishes at $\{a(x), b(x)\}$ for functions $a,b:\Omega\to\R^M$.
We prove that compactness holds under extremely weak assumptions (see \ref{H1}-\ref{H5}): we consider a Carath\'{e}odory potential with mild bounds close to the wells and at least linear growth at infinity, and bounded wells of bounded variation (see Theorem \ref{thm:compactness}).
We are able to obtain such a result by exploiting the theory of Young measures, and by carefully using the chain rule for functions of bounded variation.

The $\Gamma$-convergence result (see Theorem \ref{thm:Gamma_conv}) requires strengthening these assumptions.
Indeed, we consider wells that are allowed to jump on a piecewise $C^2$ partition of the domain $\Omega$, and that are of class $H^1\cap C^0$ inside each set of such a partition (see \ref{H6}).
The potential is assumed to also satisfies a relative continuity condition (see \ref{H7}) and a more quantitative control close to the wells (see \ref{H8}).
This set of assumptions is extremely weak compared to the one used in the literature.

Under these assumptions, we are able to prove that the limiting energy is given by \eqref{eq:functional_H0}, where the limiting surface energy density is given by \eqref{eq:sigma_intro_moving}.
The proof of the $\Gamma$-convergence result requires to face non-trivial technical challenges. On the one hand, we need to prove a uniform bound on the Euclidean length of geodesics of a class of degenerate metrics under very weak assumptions on the metric (see Theorem \ref{existence-of-geodesics-with-bounded-path-length}).
Moreover, we are able to prove that such a bound on the Euclidean length of geodesics fails if our assumptions are not satisfied (see Example \ref{ex:geodesics_infinite_Euclidean_length}).
Finally, in the liminf inequality, we need to \emph{freeze} the first variable of the potential.
This is done by introducing a suitable adjustment of the potential (see \eqref{eq:adjustment}), which allows us to work \emph{as if} the wells were not space-dependent, but only the potential is.
Finally, the construction of the optimal profile requires multiple careful estimates of energies related to changing the optimal profile from point to point.

We also show that the strategies we implement are solid enough to also cover the case where a mass constraint is imposed (see Theorem \ref{thm:Gamma_conv_mass_constraint}).\\

It is the goal of several forthcoming papers to investigate the case where the wells enjoy even lower regularity assumptions.


\section{Assumptions and main results}

In this section we collect all the several assumptions we used in establishing the results of the manuscript.
We will comment each of them, highlighting the reason why we require it, the comparison with similar assumptions in previous works in the literature, and the possibility to weaken it.\\

Let $\Omega\subset\R^N$, where $N\geq 1$, be an open bounded set with Lipschitz boundary.
Consider a function $W \colon \Omega\times\R^M\to[0,\infty)$, with $M\geq 1$, satisfying the following assumptions:
 \begin{enumerate}[label=(H\arabic*), ref=(H\arabic*)]
            \item \label{H1} $W$ is a Carath\'{e}odory function;
            \item \label{H2} There exist functions $a,b\in BV(\Omega;\R^M)\cap L^\infty(\Omega;\R^M)$ such that
            \[
                W(x,u) = 0 \,\,\,\text{ if and only if }\,\,\, u\in\{a(x), b(x)\},
            \]
            for a.e. $x\in\Omega$;
            \item \label{H3} There exists $\delta>0$ such that
            \[
                |a(x) - b(x)| \geq \delta,
            \]
            for a.e. $x\in\Omega$;
            \item \label{H4} There exist a continuous non-decreasing function $f\colon[0,\infty)\to[0,\infty)$ with $f(t)=0$ if and only if $t=0$, and $C_1>0$ such that
            \[
            \frac{1}{C_1} f\left(\min\{ |u-a(x)|, |u-b(x)| \}\right)
            \leq W(x,u)
                \leq C_1 f\left(\min\{ |u-a(x)|, |u-b(x)| \}\right),
            \]
            for a.e. $x\in\Omega$, and all $u\in \R^M$;
            \item \label{H5} There exists $C_2>0$ such that
            \[
            W(x,u) \geq \frac{1}{C_2}|u|,
            \]
            for a.e. $x \in \Omega$ and all $u\in\R^M$ with $|u|\geq C_2$.\\
    \end{enumerate}

We refer to the above assumptions as the \emph{structural} assumptions on the potential $W$.

\begin{remark}
The continuity of the potential $W$ in the second variable is needed in order to make the composition $x\mapsto W(x,u(x))$ measurable for functions $u\in H^1(\Omega;\R^M)$.
It can be relaxed to Borel measurable in the second variable.
Nevertheless, note that for the Gamma-convergence result (see Theorem \ref{thm:Gamma_conv}) we will need the potential to be piecewise continuous.
\end{remark}

\begin{remark}
We decided to work with only two wells just for simplicity of notation and in order to focus on the main novel ideas of the manuscript.
The extension to multiple wells requires a careful definition of the adjustment (see \eqref{eq:adjustment}) and a strong approximation result.
These extensions will be treated in a forthcoming paper.
\end{remark}
 
\begin{remark}
The assumption that the wells are in $L^\infty(\Omega;\R^M)$ is required in order to have \ref{H5}.
Indeed, consider the case where $a,b\in BV(\Omega;\R^M)\setminus L^\infty(\Omega;\R^M)$, and the potential is given by 
\[
W(x,u) \coloneqq \min\{ |u-a(x)|^2, |u-b(x)|^2 \}.
\]
Then, it is not possible to find $C_2>0$ such that \ref{H5} holds uniformly for a.e. $x \in \Omega$.
\end{remark}

\begin{remark}
Assumption \ref{H3} on the separation of wells is used for technical convenience. Indeed, by using techniques similar to those of \cite{Bou} and \cite{CriGra}, it is possible to extend our results, \emph{mutatis mutandis}, to the case where \ref{H3} is dropped.
Note that, in our case, we also consider the case where the wells are allowed to jump. Therefore, a bit of care will be needed in order to characterize the space where the limiting energy is finite.
\end{remark}

\begin{remark}
Condition \ref{H4} is a mild assumption that ensures non-degeneracy of the potential $W$. 
The lower bound is essentially needed in order to ensure compactness (see Theorem \ref{thm:compactness}).
The bounds are needed in order to ensure that geodesics of the optimal profile problem have uniform Euclidean length (see Lemma \ref{lemma:almost_minimizers_have_locality_prop}).
This result is essential in the construction of the recovery sequence, but it is not used in the liminf inequality.
\end{remark}

\begin{remark}
The linear growth at infinity, namely \ref{H5}, is a standard condition in the literature, and it is required to get compactness as well as a uniform $L^\infty$ bound on the optimal profiles. It goes back to the work \cite{FonTar} by Fonseca and Tartar.
In case a mass constraint is imposed, Leoni showed in \cite{Leo} that this growth at infinity can be relaxed to a non-degeneracy of the potential at infinity.
\end{remark}

The above set of assumptions will be the ones that we will need in order to establish the compactness result (see Theorem \ref{thm:compactness}).
We now introduce what we will refer to as the \emph{regularity} assumptions on the wells and on the potential. These will be needed in establishing the $\Gamma$-convergence result (see Theorem \ref{thm:Gamma_conv}).
First, we restrict our attention to a specific subclass of wells and potentials, where their jumps as well as their regularity are controlled.

Here, we use the terminology that the boundary of a set is \emph{piecewise $C^2$} to mean that it is contained in a finite union of $C^2$ hypersurfaces.
\begin{enumerate}[resume, label=(H\arabic*), ref=(H\arabic*)]
            \item \label{H6} There exists an open partition $\Omega_1,\ldots,\Omega_k$ of $\Omega$, where each $\partial\Omega_i \cap \Omega$ is piecewise $C^2$, such that
            \[
            W(x,u) = \sum_{i=1}^k W_i(x,u)\ca_{\Omega_i}(x),
            \]
            \[
            a(x) = \sum_{i=1}^k a_i(x)\ca_{\Omega_i}(x),\quad\quad\quad\quad
            b(x) = \sum_{i=1}^k b_i(x)\ca_{\Omega_i}(x),
            \]
            where $W_i\in C^0(\overline{\Omega}\times\R^M)$ and $a_i, b_i\in W^{1,2}(\Omega;\R^M) \cap C^0(\overline{\Omega};\R^M)$.\\
\end{enumerate}

Next, we introduce a relative continuity condition on the potential.
We define the \emph{adjustment}
$T \colon \Omega\times\R^M\to\R^M$ as
\begin{equation}\label{eq:adjustment}
T(x,w) \coloneqq  \frac{a(x)+b(x)}{2} + \left(b(x)-a(x), v(x)\right)\cdot w,
\end{equation}
where $v(x)\in \R^M\times\R^{M-1}$ is such that
\begin{itemize}
\item[(i)] For each $x\in \Omega$, the matrix
\[
\frac{1}{|b(x)-a(x)|}\left(b(x)-a(x), v(x)\right)
\]
is orthonormal;
\item[(ii)] The map $x\mapsto v(x)$ has the same regularity of the wells $a$ and $b$.
\end{itemize}
The \emph{raison d'\^{e}tre} of the map $T$ is the following: the \emph{adjusted potential}
\[
(x,u) \mapsto W(x, T(x,u))
\]
is such that $W(x, T(x,u))=0$ if and only if $u\in\{\pm e_1\}$ for all $x\in\Omega$.
This follows directly from the fact that $T(x,-e_1)=a(x)$ and $T(x,e_1)=b(x)$.
Namely, the adjustment $T$ allows us to reduce to the case of fixed wells.

We are now in a position to state our next assumption.

\begin{enumerate}[resume, label=(H\arabic*), ref=(H\arabic*)]
            \item \label{H7} There exists a non-decreasing modulus of continuity $\omega\colon[0,\infty)\to[0,\infty)$ such that
            \[
            | W(x, T(x,u)) - W(y, T(y,u)) | \leq \omega(|x-y|)  W(x, T(x,u)),
            \]
            for all $x,y \in \Omega$ and $u\in\R^M$.
\end{enumerate}

\begin{remark}\label{rem:T_symmetric}
In the rest of the paper, we will assume, for simplicity, that
\[
a(x) = -b(x)
\]
for a.e. $x\in\Omega$.
Note that, in this case, assumption \ref{H3} writes as
\[
    |a(x)|\geq \frac{\delta}{2},
\]
for a.e. $x\in\Omega$. For the simplicity of notation, we will write the right-hand side as $\delta$.
Moreover, denoting $a^\perp = v/2$ (cf.\ \eqref{eq:adjustment}) we will use the simplified adjustment
\[
    T_a(w)\coloneqq T(x,w) = ( a(x), a(x)^\perp ) \cdot w,
\]
for each $w\in\R^M$. Note that we can write
\[
    T_a(w) = |a(x)| R_a(x),
\]
where, for each $x\in\Omega$, the map $R_a(x):\R^M\to\R^M$ is orthonormal. In this case, since $a_i \in W^{1,2}(\Omega;\R^M) \cap C^0(\overline{\Omega};\R^M)$, therefore bounded, and $R_a$ is orthonormal, it follows that 
\begin{equation}\label{eq:T_a_properties}
    \| \nabla T_{a_i} \|_{L^2} \leq C_1 \| \nabla a_i \|_{L_2} \leq C_2, \qquad \| T_{a_i} \|_\infty = \| a_i \|_\infty \leq C, \qquad \| T_{a_i}^{-1} \|_\infty = \| a_i \|_\infty^{-1} \leq \frac{1}{\delta}.
\end{equation}
\end{remark}

\begin{remark}
Condition \ref{H7} has been used in similar investigations on models for phase transition (see, for instance \cite{FonPopo}).
Its main role is to allow us to \emph{freeze} the potential at a given point, with an error proportional to the potential itself.
We note that in our case we need to compose the potential with the map $T$ because we consider the case of \emph{moving} wells.
Otherwise, the condition
\[
| W(x, u) - W(y, u) | \leq \omega(|x-y|)  W(x, u),
\]
for all $x,y\in\Omega$ and $u\in\R^M$, would imply that the functions $a$ and $b$ are constant.
\end{remark}

Despite its technical nature, we want to show that assumption \ref{H7} is satisfied by several classes of potentials of interest. Here, we present some of them.

\begin{example}
The first example that we consider is the prototypical case of the potential for the Cahn-Hillard functional with moving wells.
Let $W \colon \Omega\times\R\to[0,\infty)$ be defined as
\[
    W(x,u) \coloneqq (u^2-a^2(x))^2,
\]
where $a\in C^0([-1,1])$, with $a(x)\geq \delta$ for all $x\in [-1,1]$. Then, it holds that
\[
T(x,u) = a(x)u.
\]
Therefore, we have that
\[
W(x,T(x,u)) = a^4(x)(u^2-1)^2.
\]
We now check that \ref{H7} holds:
\begin{align*}
| W(x, T(x,u)) - W(y, T(y,u)) | &= |a^4(x) - a^4(y)| (u^2-1) \\
    &\leq \widetilde{\omega}(|x-y|) (u^2-1)^2 \\
    &\leq \frac{\widetilde{\omega}(|x-y|)}{\delta^4} \min\{a^4(x), a^4(y)\} (u^2-1)^2 \\
    &= \frac{\widetilde{\omega}(|x-y|)}{\delta^4} \min\{ W(x, T(x,u)), W(y, T(y,u)) \},
\end{align*}
where $\widetilde{\omega} \colon [0,\infty)\to[0,\infty)$ is the modulus of continuity of the function $x\mapsto a^4(x)$.
\end{example}

Next, we generalize the above to a similar class of potentials.

\begin{example}
Let $q\in[1,\infty)$.
Define the potential $W\colon\Omega\times\R^M\to[0,\infty)$ as
    \[
        W(x, u) = \min \{ |u - a(x)|^q, |u + a(x)|^q \},
    \]
    where $a \in C^0(\Omega;\R^M)$ with $|a(x)|\geq \delta$, for all $x\in\Omega$.
    Then, we have that
    \[
        W(x, T(x,u)) = |a(x)|^q\min \{ |u - e_1|^q, |u + e_1|^q \}.
    \]
    Therefore, using similar computations as above, we have that
    \[
        |W(x, T(x,u)) - W(y, T(y,u))|
        \leq \frac{2^q}{\delta^p}\widetilde{\omega}(|x - y|)\min \{W(x, T(x,u)), W(y, T(y,u)) \},
    \]
    where $\widetilde{\omega}\colon[0,\infty)\to[0,\infty)$ is the modulus of continuity of the function $x\mapsto |a|^q(x)$.
\end{example}

Next, we show that assumption \ref{H7} is verified even by potentials with very low regularity.

\begin{example}
Let $V\colon\R^M\to[0,\infty)$ be a Borel function, and let $a\colon\Omega\to\R^M$ be a Borel function. Set $b\coloneqq -a$.
Define $W\colon\Omega\times\R^M\to[0,\infty)$ as
\[
W(x,u)\coloneqq V(T(x,u)^{-1}),
\]
where for each $x \in \Omega$, the map $T(x, \cdot)^{-1}$ is the inverse of the map defined in \eqref{eq:adjustment}.
Then, $W$ satisfies assumption \ref{H7}, since
\[
|W(x, T(x,u)) - W(y, T(y,u))|=0
\]
for all $x,y\in\Omega$ and all $u\in\R^M$.
Note that, in order for $W$ to satisfy assumption \ref{H1}, we need $V$ to be continuous.
\end{example}

Finally, we introduce a very mild condition on the function controlling the growth around the wells. This will be needed in order to ensure that the geodesics of the limiting distance function have uniformly bounded Euclidean length (see Theorem \ref{existence-of-geodesics-with-bounded-path-length}).

\begin{enumerate}[resume, label=(H\arabic*), ref=(H\arabic*)]
            \item \label{H8} There exists $C_3>0$ such that
            \[
            f(2t) \leq C_3 f(t),
            \]
            for all $t>0$, where $f\colon[0,\infty)\to[0,\infty)$ is the function given by \ref{H4}.
\end{enumerate}

\begin{remark}
We would like to stress that this assumption is very mild.
Indeed, in previous works, stronger conditions were imposed in order to have a uniform bound on the Euclidean length of certain geodesic problems.
In \cite{CriGra} it was required that the potential behaves as a quadratic function close to the wells, while in \cite{CriFonGan_moving_sub}, that it is controlled by quadratic functions from above and from below close to the wells.

In particular, note that our assumption also covers the case where the potential $W$ is \emph{concave near the wells}, that was excluded by the assumptions in the literature.
Indeed, for any $\alpha>0$, the function $f(t)\coloneqq t^\alpha$ satisfies assumption \ref{H8}.
\end{remark}

Finally, we introduce the family of functionals that we consider.

\begin{definition}
For $\varepsilon>0$, we define the functional $\mathcal{F}_\varepsilon\colon L^1(\Omega;\R^M)\to[0,+\infty]$ as
\[
\mathcal{F}_\varepsilon(u) \coloneqq \int_\Omega \left[ \frac{1}{\varepsilon} W(x,u(x))
    + \varepsilon|\nabla u(x)|^2 \right] \dd x,
\]
for $u\in H^1(\Omega;\R^M)$, and $+\infty$ else in $L^1(\Omega;\R^M)$.
\end{definition}

\begin{definition}
    We define the space $BV(\Omega;\{a,b\})$ as the space of functions $u\in BV(\Omega;\R^M)$ such that $u(x)\in\{ a(x), b(x) \}$ for a.e. $x\in\Omega$.
\end{definition}
	
\begin{definition}
    We denote the \emph{geodesic distance}
    \[
        \dd_W(x;p,q) \coloneqq \inf\left\{ \int_{-1}^1 2\sqrt{W(x,\gamma(t))}|\gamma'(t)| \,:\, \gamma\in W^{1,1}([-1,1];\R^M), \gamma(-1)=p, \gamma(1)=q  \right\},
    \]
    for all $x\in\Omega$ and all $p,q\in\R^M$. Moreover, we set the functional $\mathcal{F}_\infty: L^1(\Omega;\R^M)\to[0,+\infty]$ as
    \[
        \mathcal{F}_\infty(u) \coloneqq \int_{J_u\cap \Omega} \dd_W(x; u^-(x), u^+(x)) \dhno(x),
    \]
    for $u\in BV(\Omega;\{a,b\})$, and $+\infty$ else in $L^1(\Omega;\R^M)$.
    Here, $J_u$ denotes the jump set of the function $u$, $u^-$ and $u^+$ the two traces of $u$ (see Section \ref{sec:BV} for more details).
\end{definition}

We are now in position to state the first two main results of the manuscript. We start with the compactness result, that holds under very mild assumptions on the potential and on the wells.

\begin{theorem}\label{thm:compactness}
Assume \ref{H1}-\ref{H5} hold.
Let $\{\e_n\}_n$ be an infinitesimal sequence.
Let $\{u_n\}_n \subset H^1(\Omega;\R^M)$ be such that
\[
\sup_{n\in\N} \mathcal{F}_{\varepsilon_n}(u_n) < +\infty.
\]
Then, there exists a subsequence (not relabeled) such that $u_{n}\to u$ strongly in $L^1(\Omega;\R^M)$, for some $u\in BV(\Omega; \{a,b\})$.
\end{theorem}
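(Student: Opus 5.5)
The plan is to adapt the Modica--Mortola/Fonseca--Tartar argument to moving wells. I will use the adjustment idea, or more simply a distance-type function to the wells that is BV in $x$, and get $BV$ bounds via the chain rule. I will use Young measures to identify the limit.

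\emph{Step 1 (equi-integrability and Young measure).} From \ref{H5}, $\int_{\{|u_n|\ge C_2\}}|u_n|\,dx \le C_2\int_\Omega W(x,u_n)\,dx \le C C_2\varepsilon_n \to 0$. Hence $\{u_n\}_n$ is bounded in $L^1(\Omega;\R^M)$ and equi-integrable. Passing to a subsequence, $u_n$ generates a Young measure $\{\nu_x\}_{x\in\Omega}$ of probability measures on $\R^M$. Since $W$ is Carath\'eodory and nonnegative, the fundamental theorem on Young measures gives
\[
\int_\Omega\int_{\R^M}W(x,\xi)\,d\nu_x(\xi)\,dx\le\liminf_n\int_\Omega W(x,u_n)\,dx=0 .
\]
By \ref{H2}, $\nu_x=\theta(x)\delta_{a(x)}+(1-\theta(x))\delta_{b(x)}$ for a.e. $x$. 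It therefore remains to show $\theta\in\{0,1\}$ a.e.; then $u_n\to u$ in measure, and with equi-integrability strongly in $L^1$. I will obtain this from strong $L^1$ compactness of a suitable scalar quantity.

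\emph{Step 2 (scalar Modica--Mortola quantity).} Set $d_a(x,u)\coloneqq\min\{|u-a(x)|,\delta/2\}$ and let $\phi(s)\coloneqq\int_0^s\sqrt{f(t)}\,dt$. Consider $w_n(x)\coloneqq\phi(d_a(x,u_n(x)))$. The composition $(x,u)\mapsto \phi(\min\{|u-a(x)|,\delta/2\})$ is Lipschitz in $u$ and BV in $x$, since $a\in BV\cap L^\infty$. Because $a$ is only BV, I will mollify: take $a^\eta\in C^\infty$ with $a^\eta\to a$ in $L^1$ and $|Da^\eta|(\Omega)\le |Da|(\Omega)+\eta$, and apply the classical chain rule to $\phi(\min\{|u_n-a^\eta|,\delta/2\})$. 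On the region $|u_n-a|<\delta/2$, \ref{H3} gives $\min\{|u_n-a|,|u_n-b|\}=|u_n-a|$. There the lower bound in \ref{H4} yields $\sqrt{f(d_a)}\le\sqrt{C_1W}$, and Young's inequality gives
\[
|\nabla_u \text{-part}| \le \sqrt{C_1}\,\sqrt{W(x,u_n)}|\nabla u_n|\le \tfrac{\sqrt{C_1}}{2}\Big(\tfrac1{\varepsilon_n}W+\varepsilon_n|\nabla u_n|^2\Big).
\]
The $x$-derivative part is bounded by $\phi'(\delta/2)\,|Da^\eta|$. Letting $\eta\to0$ and using lower semicontinuity of total variation, I obtain $\sup_n|Dw_n|(\Omega)<\infty$. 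Since $\phi$ is bounded on $[0,\delta/2]$, the $w_n$ are also bounded in $L^\infty$. By compact embedding, up to a subsequence $w_n\to w$ in $L^1$.

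\emph{Step 3 (conclusion).} The limit $w$ is also described by the Young measure: $w(x)=\int\phi(d_a(x,\xi))\,d\nu_x=(1-\theta(x))\,\phi(\delta/2)$, because $d_a(x,b(x))=\delta/2$ by \ref{H3}. Strong convergence means that the Young measure generated by $w_n$ is a Dirac mass. On the other hand, that Young measure is the pushforward of $\nu_x$ under $\xi\mapsto\phi(d_a(x,\xi))$, namely $\theta\delta_0+(1-\theta)\delta_{\phi(\delta/2)}$. Since $\phi(\delta/2)>0$, it is a Dirac mass only if $\theta(x)\in\{0,1\}$. Hence $\nu_x=\delta_{u(x)}$ with $u=a\ca_E+b\ca_{\Omega\setminus E}$, where $E=\{\theta=1\}$, and $u_n\to u$ in $L^1$.

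Finally, $u\in BV$: since $w\in BV$, the set $E=\{w=0\}$ has finite perimeter, and $u=a\ca_E+b(1-\ca_E)$ with $a,b\in BV\cap L^\infty$ is BV by the product rule for bounded BV functions. Thus $u\in BV(\Omega;\{a,b\})$.

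\emph{Main obstacle.} The delicate point is the chain rule in Step 2. The composition with the merely BV, possibly discontinuous $a$ cannot be handled by the smooth chain rule directly, and the mollification must keep the pointwise Young-inequality bound, which uses $W$ at the true well $a$ and not at $a^\eta$. I expect to handle this by estimating the difference $|\phi(d_{a^\eta})-\phi(d_a)|\le \phi'(\delta/2)|a^\eta-a|$ in $L^1$, and using lower semicontinuity only at the end. If that route fails, I would instead use the BV chain rule of Ambrosio--Dal Maso for Lipschitz functions of $(a,u_n)$, applied to the $BV\times H^1$ pair.
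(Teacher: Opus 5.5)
Your proposal is correct. Its skeleton matches the paper's: a Young measure concentrated on the wells, a scalar quantity bounded in $BV$, and identification of that quantity's Young measure as a Dirac mass, which forces $\theta\in\{0,1\}$. The difference lies in which scalar quantity is used and how its $BV$ bound is obtained.

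\emph{The paper's route.} The paper first reduces to fixed wells.
\begin{itemize}
\item Assuming $b=-a$, it builds a BV orthonormal frame and the adjustment $T_a$, and works with $v_n=T_a^{-1}u_n$, whose Young measure is carried by $\{\pm e_1\}$.
\item It truncates componentwise and composes with the truncated geodesic distance $\mathrm{d}_{\tilde W}(-e_1,\cdot)$.
\item The BV chain rule then yields jump and Cantor contributions controlled by $\mathcal{H}^{N-1}(J_a)$ and $|D^ca|(\Omega)$, plus a product-rule term from $\nabla T_a^{-1}$.
\item Strong convergence is recovered through norm convergence in $L^q$ and the inverse transformation.
\end{itemize}

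\emph{Your route.} You keep the moving wells and use the single bounded, globally Lipschitz scalar $\phi(\min\{|u-a(x)|,\delta/2\})$. It separates $a(x)$ from $b(x)$ by \ref{H3} and needs only the lower bound of \ref{H4} near one well. This avoids the symmetry reduction, the frame, products of BV coefficients with unbounded $H^1$ functions, truncations, and geodesic distances. Your pushforward identification of the Young measure also gives $\theta\in\{0,1\}$ and, with Vitali, strong convergence in one step.

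\emph{What each buys.} Your argument is more elementary and more general for compactness alone. The paper's route gains coherence with the rest of the article, since $T_a$ and the fixed-well geodesic distance are exactly the tools reused in the blow-up of the liminf inequality.

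\emph{On the step you flagged.} The estimate $|\phi(d_{a^\eta})-\phi(d_a)|\le\sqrt{f(\delta/2)}\,|a^\eta-a|$ only gives $w_n^\eta\to w_n$ in $L^1$ for fixed $n$. That is what lower semicontinuity needs. The gradient bound must be passed to the limit separately, by dominated convergence with $n$ fixed, as follows.
\begin{itemize}
\item A.e.\ one has $|\nabla w_n^\eta|\le\sqrt{f(\min\{|u_n-a^\eta|,\delta/2\})}\,\ca_{\{|u_n-a^\eta|\le\delta/2\}}|\nabla u_n|+\sqrt{f(\delta/2)}\,|\nabla a^\eta|$.
\item The first term is dominated by $\sqrt{f(\delta/2)}\,|\nabla u_n|\in L^1(\Omega)$.
\item Along a subsequence with $a^\eta\to a$ a.e., its pointwise $\limsup$ is at most $\sqrt{f(|u_n-a|)}\,\ca_{\{|u_n-a|\le\delta/2\}}|\nabla u_n|\le\sqrt{C_1W(x,u_n)}\,|\nabla u_n|$. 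This holds because on the closed region $\{|u_n-a|\le\delta/2\}$ you still have $|u_n-b|\ge\delta/2\ge|u_n-a|$.
\end{itemize}
Together with $\int_\Omega|\nabla a^\eta|\,dx\to|Da|(\Omega)$, this gives $|Dw_n|(\Omega)\le\frac{\sqrt{C_1}}{2}\mathcal{F}_{\eps_n}(u_n)+\sqrt{f(\delta/2)}\,|Da|(\Omega)$, uniformly in $n$.

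Your fallback through the chain rule for the pair $(a,u_n)\in BV(\Omega;\R^{2M})$ also works. Since $u_n\in W^{1,1}$ contributes neither a jump nor a Cantor part, those parts of $Dw_n$ are controlled by $|Da|$.
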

    
We are now in position to state the $\Gamma$-convergence result.

\begin{theorem}\label{thm:Gamma_conv}
Assume \ref{H1}-\ref{H8} hold.
Let $\{\e_n\}_n$ be an infinitesimal sequence.
Then, the sequence of functionals $\{\mathcal{F}_{\varepsilon_n}\}_n$ $\Gamma$-converges to $\mathcal{F}_\infty$ with respect to the $L^1(\Omega;\R^M)$ topology.
\end{theorem}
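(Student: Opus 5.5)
The proof splits, as usual, into a liminf inequality and the construction of a recovery sequence. Compactness is already provided by Theorem \ref{thm:compactness}. Throughout we use the symmetric normalization $a=-b$ of Remark \ref{rem:T_symmetric} and work on each $\Omega_i$ of \ref{H6} separately. The interfaces $\partial\Omega_i\cap\Omega$ are piecewise $C^2$, hence $\mathcal H^{N-1}$-$\sigma$-finite and rectifiable, so they are handled by a blow-up at $\mathcal H^{N-1}$-a.e. point.

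\textbf{Liminf inequality.} Let $u_n\to u$ in $L^1$ with $\sup_n\mathcal F_{\varepsilon_n}(u_n)<\infty$. By Theorem \ref{thm:compactness} we have $u\in BV(\Omega;\{a,b\})$. Consider the energy measures
\[
\mu_n \coloneqq \Big(\tfrac1{\varepsilon_n}W(x,u_n)+\varepsilon_n|\nabla u_n|^2\Big)\mathcal L^N ,
\]
and let $\mu$ be a weak-$*$ limit. It suffices to show that $\frac{d\mu}{d\mathcal H^{N-1}\llcorner J_u}(x_0)\ge d_W(x_0;u^-(x_0),u^+(x_0))$ for $\mathcal H^{N-1}$-a.e. $x_0\in J_u$. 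We do this by the Fonseca--M\"uller blow-up method.

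Fix such an $x_0$, which is either interior to some $\Omega_i$ or a regular point of an interface. We rescale on cubes $Q_\rho(x_0)$ oriented by $\nu_u$. To freeze the potential, we set $w_n \coloneqq T(\cdot,\cdot)^{-1}\circ u_n$, i.e. $w_n(x)=T_{a(x)}^{-1}u_n(x)$. By \ref{H7}, on $Q_\rho$ we have
\[
W(x,u_n)=W(x,T(x,w_n))\ge (1-\omega(\rho))\,W(x_0,T(x_0,w_n)).
\]
The gradient changes as $\nabla u_n=T_a\nabla w_n+(\nabla T_a)w_n$. The error term $\varepsilon_n|\nabla T_a|^2|w_n|^2$ should be handled by the $L^2$ bound \eqref{eq:T_a_properties} together with truncation of $w_n$: by \ref{H5} we may truncate, and the truncation does not increase the energy up to $o(1)$. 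Then
\[
\varepsilon_n\int_{Q_\rho}|\nabla T_a|^2\,|w_n|^2\lesssim \varepsilon_n\|\nabla a\|_{L^2}^2\to0 .
\]
Near interface points, $T_a$ is continuous only up to the interface. In that case we apply this on each side separately, using the one-sided continuity of $a_i$ and $b_i$.

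Next comes the Modica--Mortola trick, $\tfrac1\varepsilon W+\varepsilon|\nabla v|^2\ge2\sqrt{W}\,|\nabla v|$, applied to $v=T(x_0,w_n)$ and the frozen potential $W(x_0,\cdot)$. Combined with the one-dimensional slicing argument of Fonseca--Tartar, this gives the lower bound $d_W(x_0;u^-(x_0),u^+(x_0))$ up to a factor $(1-\omega(\rho))$ and $o(1)$ errors. We also use that $T(x_0,w_n)\to u^\pm(x_0)$ in the blow-up, by continuity of $a_i$ at $x_0$ from each side. No length bound on geodesics is needed here.

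\textbf{Limsup inequality.} By a density argument, using a relaxation/lower semicontinuity property of $\mathcal F_\infty$ that we would verify from continuity of $x\mapsto d_W(x;a(x),b(x))$ on each $\overline{\Omega_i}$ (which follows from \ref{H7}), it suffices to treat $u$ whose jump set is, within each $\Omega_i$, a finite union of $C^2$ pieces transversal to the interfaces. For $x$ on the jump set, we take near-optimal curves $\gamma_x$ for the frozen geodesic problem of $W(x,T(x,\cdot))$ between $\pm e_1$, which by \ref{H7} is comparable to $d_W(x;a(x),b(x))$.

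The key ingredient is Theorem \ref{existence-of-geodesics-with-bounded-path-length}, which we invoke using \ref{H4} and \ref{H8}. It provides geodesics with Euclidean length bounded uniformly in $x$. This bound is what lets us reparametrize the curves into one-dimensional profiles $z_x$ with uniformly controlled transition width. We then define
\[
u_n(x) \coloneqq T\Big(x,\, z_{\pi(x)}\big(\mathrm{dist}_s(x,J_u)/\varepsilon_n\big)\Big),
\]
cut off at distance of order $\varepsilon_n\log$-scale or a fixed large multiple, where $\pi$ is a projection onto $J_u$. Since $T(x,\pm e_1)\in\{a(x),b(x)\}$, the function $u_n$ agrees with $u$ away from a thin layer.

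To make $x\mapsto z_{\pi(x)}$ regular, we partition $J_u$ into small patches and use a single profile on each patch. We glue neighbouring patches with an interpolation in a layer of width $\varepsilon_n$, whose cost is $O(\text{patch boundary}\times\varepsilon_n^{N-2}\cdot\varepsilon_n)$ per patch; this cost vanishes once we send $\varepsilon_n\to0$ before the patch size. The gradient of $T$ contributes $\varepsilon_n\int|\nabla a|^2\to0$, again by $a_i\in W^{1,2}$ and boundedness of the profiles.

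\textbf{Main obstacle.} We expect the main difficulty to be the recovery sequence near the interfaces $\partial\Omega_i$, where $a$ jumps and $T$ is discontinuous. There $u$ itself jumps across $\partial\Omega_i$ between values of $a_i$ and $a_j$. The transition must be built with the potential $W_i$ or $W_j$ on the correct side, and the limiting density is $d_W(x;u^-,u^+)$ computed with the potential of the side on which the jump is charged. We would handle this by placing the optimal transition on one side of the interface and using the uniform Euclidean length bound to keep all profiles in a fixed compact set. The errors coming from the modulus $\omega$ in \ref{H7} are then controlled uniformly.
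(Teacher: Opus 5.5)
Away from $\bigcup_i\partial\Omega_i$ your plan follows the paper. The liminf is the same Fonseca--M\"uller blow-up with freezing via \ref{H7}. The limsup uses the same bounded-length geodesics from Theorem~\ref{existence-of-geodesics-with-bounded-path-length}, frozen curves transported by $T(x,\cdot)$, and patch-wise gluing; keeping the patch size fixed and gluing in $\varepsilon_n$-layers is a harmless variant of the paper's coupled scales $r_n=\varepsilon_n^\alpha$, $\eta_n=\varepsilon_n^\beta$.

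The genuine gap is at jumps of $u$ across $\partial\Omega_i\cap\partial\Omega_j$, which are generically forced because the wells jump there. You take the density there to be the one-sided $d_{W_i}(x;u^-,u^+)$ and put the whole transition on one side. But your own liminf, run side by side, gives on each slice (with endpoint values $p,q$) only $d_{W_j}(x_0;p,L^ju_n(h))+d_{W_i}(x_0;L^iu_n(h),q)$ with $L^i,L^j\to\mathrm{Id}$. In the limit this is
\[
\inf_{r\in\R^M}\big[d_{W_j}(x_0;u^-(x_0),r)+d_{W_i}(x_0;r,u^+(x_0))\big],
\]
the adapted distance introduced before Lemma~\ref{lemma:adapted_geodesic_distance_is_locally_lipschitz}, which is exactly where the paper's proof of Theorem~\ref{thm:liminf} ends. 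This bound is sharp: a $W_j$-half-profile from $u^-$ to $r$ on the $\Omega_j$ side followed by a $W_i$-half-profile from $r$ to $u^+$ on the $\Omega_i$ side attains it. It is in general strictly smaller than both one-sided costs. Take $M=1$, $W(x,u)=(u^2-a(x)^2)^2/a(x)^4$ with $a\equiv1$ on $\Omega_j$ and $a\equiv2$ on $\Omega_i$; then \ref{H1}--\ref{H8} hold, with \ref{H7} for $\omega\equiv0$. For $u=a$ one gets $d_{W_j}(1,2)=8/3$ and $d_{W_i}(1,2)=5/6$, while splitting at $r=\sqrt{8/5}$ costs about $0.63$. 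So your one-sided recovery sequence does not meet the lower bound. The construction must instead split the transition at an almost optimal $r$, which stays in a fixed ball by Proposition~\ref{prop:infimum_attained_in_ball}. The two half-profiles must then be glued continuously at the interface. Since $T$ jumps there, $T_{a_j}(x)T_{a_j}(x_P)^{-1}r$ and $T_{a_i}(x)T_{a_i}(x_P)^{-1}r$ agree only at the patch centre, so a thin matching layer is needed, whose cost vanishes with the patch size.

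The interfaces also undermine your density step. It needs regular $u_k\to u$ with $\mathcal F_\infty(u_k)\to\mathcal F_\infty(u)$; lower semicontinuity of $\mathcal F_\infty$ gives the opposite inequality. Continuity of $x\mapsto d_W(x;a(x),b(x))$ on each $\overline{\Omega_i}$ only controls the part of $J_u$ inside the $\Omega_i$. On the interfaces the density depends on the traces of $\{u=a\}$ from both sides and is only lower semicontinuous. Pushing a piece of $\partial^*\{u=a\}$ lying on $\partial\Omega_i$ into $\Omega_i$ replaces the adapted cost of $(u^-,u^+)$ by the adapted cost of $(u^-,s)$ plus $d_{W_i}(x;s,u^+)$, with $s$ the other well of $W_i$. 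That sum is larger by the triangle inequality, strictly in general. This is why the paper needs Proposition~\ref{prop:approx_sets}, whose approximating sets keep relatively open pieces of $\partial\Omega_i$ in their boundaries.

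A minor point: ``truncation costs only $o(1)$'' is not justified, since projecting $u_n$ onto a ball can raise $W$ by a fixed factor on $\{|u_n|>L\}$. The paper instead truncates $W$ at a level $M$ at the same time, which makes the energy pointwise smaller, and then recovers $d_W$ via Corollary~\ref{corol:truncation_constant}.
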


In particular, the liminf and the limsup inequalities will be given by Theorem \ref{thm:liminf} and Theorem \ref{thm:limsup}, respectively.
We would like to remark here that we are able to prove the liminf inequality under a weaker assumption on the regularity of the partition of $\Omega$.\\

The construction of the recovery sequence we will provide in Theorem \ref{thm:limsup} can be easily adapted to also cover the case where a mass constraint is imposed.

\begin{definition}
For $\varepsilon>0$ and $m\in\R^M$, we define the functional $\mathcal{F}_\varepsilon^m\colon L^1(\Omega;\R^M)\to[0,+\infty]$ as
\begin{gather*}
    \mathcal{F}_\varepsilon^m(u) \coloneqq \begin{cases}
        \displaystyle\int_\Omega \left[ \frac{1}{\varepsilon} W(x,u(x)) + \varepsilon|\nabla u(x)|^2 \right] \dd x \qquad &u \in H^1 (\Omega; \R^M), \; \displaystyle\int_\Omega u \dd x = m,\\
        + \infty \qquad &\text{otherwise}.
    \end{cases}
\end{gather*}
\end{definition}

\begin{definition}
For $m\in\R^M$, we define the functional $\mathcal{F}_\infty^m\colon L^1(\Omega;\R^M)\to[0,+\infty]$ as
\begin{gather*}
    \mathcal{F}_\infty^m (u) \coloneqq \begin{cases}
        \displaystyle\int_{J_u\cap \Omega} \dd_W(x; u^-(x), u^+(x)) \dhno(x) \qquad &u \in BV(\Omega; \{a,b\}), \; \displaystyle\int_\Omega u \dd x = m,\\
        + \infty \qquad &\text{otherwise.}
    \end{cases}
\end{gather*}
\end{definition}

In this case, our main results are stated as follows.

\begin{theorem}\label{thm:Gamma_conv_mass_constraint}
Assume \ref{H1}-\ref{H8} hold, with the function $f$ from $\ref{H5}$ satisfying $f(t) \leq |t|^\alpha$ with $\alpha > 1$ close to the wells.
Let $\{ \e_n\}_n$ be an infinitesimal sequence.
Then, the following hold:
\begin{enumerate}
    \item If $\{ u_n \}_n \subset H^1 (\Omega; \R^M)$ is such that
    $$ \sup_n \mathcal{F}_{\varepsilon_n}^m (u_n) < \infty, $$
    then, up to a subsequence, we have that $u_n \to u$ in $L^1$, with $ u \in BV(\Omega; \{ a, b\})$.
    \item The sequence of functionals $\{ \mathcal{F}_{\varepsilon_n}^m \}_n $ $\Gamma$-converges to $\mathcal{F}_\infty^m$ with respect to the $L^1$ topology.
\end{enumerate}
\end{theorem}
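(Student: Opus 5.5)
Part (1) follows directly from Theorem \ref{thm:compactness}: since $\mathcal{F}^m_{\varepsilon_n}(u_n)=\mathcal{F}_{\varepsilon_n}(u_n)$ whenever the former is finite, we get a subsequence with $u_n\to u$ in $L^1(\Omega;\R^M)$ and $u\in BV(\Omega;\{a,b\})$. Because $\Omega$ is bounded, $L^1$ convergence passes to the mean, so $\int_\Omega u\,\dd x=m$. For part (2), the liminf inequality is also inherited. If $u_n\to u$ in $L^1$ with $\liminf_n \mathcal{F}^m_{\varepsilon_n}(u_n)<\infty$, then along a subsequence the constraint holds for every $n$, so the limit $u$ satisfies it as well. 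Moreover $\mathcal{F}^m_{\varepsilon_n}(u_n)=\mathcal{F}_{\varepsilon_n}(u_n)$, and Theorem \ref{thm:liminf} gives $\liminf_n\mathcal{F}_{\varepsilon_n}(u_n)\ge \mathcal{F}_\infty(u)=\mathcal{F}^m_\infty(u)$.

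The real work is the limsup inequality. Fix $u\in BV(\Omega;\{a,b\})$ with $\int_\Omega u=m$, and let $\{v_n\}_n$ be the recovery sequence from Theorem \ref{thm:limsup}, built from optimal profiles along $J_u$. The plan is to correct its mass with a small perturbation. Set $m_n\coloneqq \int_\Omega v_n\,\dd x - m$. The first step is to quantify $|m_n|\to 0$. From the construction, $v_n$ differs from $u$ only in an $O(\varepsilon_n)$ tubular neighbourhood of $J_u$ (and of the $\partial\Omega_i$), where the profiles are uniformly bounded in $L^\infty$ by \ref{H5} and Theorem \ref{existence-of-geodesics-with-bounded-path-length}. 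This should give $|m_n|\le C\varepsilon_n$, possibly up to a logarithmic factor coming from truncating the profiles at finite length.

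The second step is the correction itself. Choose $M$ balls $B(x_j,r)$ compactly contained in one region $\Omega_i$, away from $J_u$ and from the support of $v_n-u$, inside which $u$ is a.e.\ equal to one well, say $a$. Then define
\[
w_n\coloneqq v_n-\sum_{j=1}^M t_{n,j}\,\varphi_j,
\]
where each $\varphi_j\in C_c^\infty(B(x_j,r))$ is a fixed smooth bump. Using the continuity of $a$ on $\overline{\Omega_i}$, one can arrange $\int\varphi_j$ to span $\R^M$, so one picks $t_n\in\R^M$ with $|t_n|\le C|m_n|$ and $\int_\Omega w_n\,\dd x=m$. Since $w_n\to u$ in $L^1$, it remains to bound the extra energy. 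The gradient part is cheap: $\varepsilon_n|t_n|^2\to0$ and the cross term vanishes because $\nabla v_n=\nabla a$ there. For the potential part, \ref{H4} gives
\[
\frac{1}{\varepsilon_n}\int W\bigl(x,a(x)-t_n\cdot\varphi\bigr)\,\dd x\le \frac{C}{\varepsilon_n}f\bigl(C|m_n|\bigr).
\]
This is exactly where the hypothesis $f(t)\le |t|^\alpha$ with $\alpha>1$ enters: it gives $C\varepsilon_n^{\alpha-1}\to0$, provided $|m_n|=O(\varepsilon_n)$.

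I expect the main obstacle to be the sharp bound $|m_n|=O(\varepsilon_n)$, since any loss (for instance from how the profiles are cut off at distance $\varepsilon_n$ times a diverging length) must be beaten by $\alpha>1$. I would first revisit the construction in Theorem \ref{thm:limsup}: the transition layer has width $L\varepsilon_n$, where $L$ is the Euclidean-length-controlled parameter, and the curve is reparametrized so that $\gamma$ reaches the wells in finite time. Then $|m_n|\le C L\varepsilon_n\,\mathcal{H}^{N-1}(J_u)$ plus the error from approximating $J_u$ by smooth interfaces. The latter error can be made zero in mass by a diagonal argument, since one may approximate $u$ by polyhedral-type sets with the same mass after a small volume adjustment.
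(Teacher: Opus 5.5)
Your proposal follows the paper's proof: compactness and the liminf inequality are inherited unchanged from the unconstrained case, and the limsup is obtained by (i) approximating $\{u=a\}$ by piecewise $C^2$ sets with exactly the prescribed mass and diagonalizing (the paper does this explicitly by adding and removing small balls centred at density-$0$ and density-$1$ points, following Murray), and (ii) correcting the $O(\varepsilon_n)$ mass defect of the recovery sequence by a bump placed where the sequence equals a well, with $f(t)\le|t|^\alpha$, $\alpha>1$, making the potential cost vanish. The only difference is minor: the paper uses a single vector-valued cone-shaped bump in the direction $m_n-m$ on a shrinking ball $B(x_0,\varepsilon_n^\beta)$ with $\beta<1/N$, at a cost of order $(\varepsilon_n/r_n^{N})^{\alpha-1}$, whereas your fixed-size bumps cost only $O(\varepsilon_n^{\alpha-1})$ and tolerate a somewhat larger mass defect (e.g.\ a logarithmic loss).
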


The proof of the recovery sequence for the mass constrained functional requires a slight modification of the proof of Theorem \ref{thm:limsup}, by using a similar strategy to that employed in the proof of \cite[Theorem 15]{cristoferi2025phaseseparationmultiplyperiodic} and will be discussed at the end of Section \ref{sec:limsup}.


\section{Preliminaries}

In this section, we collect some definitions and tools used throughout this paper.

\subsection{Functions of bounded variation and sets of finite perimeter}\label{sec:BV}

We recall some basic definitions and facts about functions of bounded variation and sets of finite perimeter. We refer to \cite{AFP} for more details.

\begin{definition}[Functions of bounded variation] Let $u \in L^1 (\Omega; \R^M)$. We say that $u$ has bounded variation in $\Omega$, denoted by $u \in BV(\Omega; \R^M)$, if its distributional derivative $Du$ is a matrix-valued Radon measure on $\Omega$. In particular, its variation in $\Omega$, denoted by $|Du|(\Omega)$, is equal to 
\begin{equation}
    |Du|(\Omega) \coloneqq \sup \left\{ \sum_{i=1}^M \int_\Omega u_i \, \mathrm{div} \varphi_i \, \dd x : \varphi \in C_c^\infty(\Omega; \R^{M \times N}), \| \varphi \|_\infty \leq 1 \right\}.
\end{equation}
This can also be denoted by $V(u, \Omega)$.
\end{definition}

\begin{definition}[Jump set]
    Let $u \in L^1 (\Omega; \R^M)$. We define the jump set of $u$, denoted by $J_u$, as the set of points where there exist finite one-sided Lebesgue limits. In particular, for each $x \in J_u$ there exists distinct vectors $a,b \in \R^M$ and a normal $\nu \in \mathbb{S}^{N-1}$ such that
    \begin{equation}
        \lim_{\rho \to 0^+} \frac{1}{\rho^N} \int_{B^+ (x, \rho, \nu)} | u(y) - a | \, \dd y = 0, \qquad \lim_{\rho \to 0^+} \frac{1}{\rho^N} \int_{B^- (x, \rho, \nu)} | u(y) - b | \, \dd y = 0,
    \end{equation}
    where $B^+ (x, \rho, \nu)$ and $B^- (x, \rho, \nu)$ are defined as
    \begin{equation*}
        B^+ (x, \rho, \nu) = \{ y \in B(x,\rho) : (y - x) \cdot \nu \geq 0 \}, \quad B^- (x, \rho, \nu) = \{ y \in B(x,\rho) : (y - x) \cdot \nu \leq 0 \}.
    \end{equation*}
    If $x \in J_u$, we denote $(a,b,\nu)$ as $(u^+ (x), u^- (x), \nu_u (x))$.
\end{definition}

\begin{theorem}[Decomposition of distributional derivative]
    The jump set $J_u$ of a function $u \in BV(\Omega; \R^M)$ is countably $\mathcal{H}^{N-1}$-rectifiable. Moreover, 
    \begin{equation}
        Du = \nabla u\, \mathcal{L}^N + ( u^+ - u^- ) \otimes\nu_u \mathcal{H}^{N-1} \llcorner J_u + D^c u,
    \end{equation}
    where $D^c u$ denotes the Cantor part of the distributional derivative.
\end{theorem}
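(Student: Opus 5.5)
This is the classical Federer--Vol'pert decomposition, and I would follow the standard route of \cite{AFP}. First I would split $Du=D^au+D^su$ by Radon--Nikodym with respect to $\mathcal{L}^N$. I would then show that $u$ is approximately differentiable $\mathcal{L}^N$-a.e., using a Calder\'on--Zygmund type argument based on Poincar\'e inequalities on balls and Lebesgue differentiation of $|Du|$. The approximate gradient then coincides with the density of $D^au$, so $D^au=\nabla u\,\mathcal{L}^N$.

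Next I would introduce the approximate discontinuity set $S_u\supset J_u$. By the coarea formula applied componentwise, $Du_i=\int_{\R} D\ca_{\{u_i>t\}}\dd t$, and for a.e.\ $t$ the superlevel set $\{u_i>t\}$ has finite perimeter. By De Giorgi's structure theorem its reduced boundary is countably $\mathcal{H}^{N-1}$-rectifiable. Choosing a countable dense set of levels $t$ for each $i=1,\dots,M$, I would show that $S_u$ is contained, up to an $\mathcal{H}^{N-1}$-null set, in the union of these reduced boundaries. This gives the rectifiability of $J_u$. I would also record that $|Du|$ vanishes on $\mathcal{H}^{N-1}$-null sets and that $\mathcal{H}^{N-1}(S_u)=0$ implies $|Du|(S_u)=0$.

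The key step, and the one I expect to be the main obstacle, is the Federer--Vol'pert statement $\mathcal{H}^{N-1}(S_u\setminus J_u)=0$, together with the identification of $Du\llcorner J_u$. For this I would work at $\mathcal{H}^{N-1}$-a.e.\ point $x$ of the rectifiable set. At such points the reduced boundaries of the level sets have a common approximate tangent plane with normal $\nu$, and a blow-up $u(x+\rho y)$ as $\rho\to0$ converges in $L^1_{\mathrm{loc}}$ to a function taking two values on half-spaces. This exhibits the one-sided limits $u^\pm(x)$. The same blow-up, combined with the density estimate $|Du|(B_\rho(x))\lesssim\rho^{N-1}$ on rectifiable sets, gives
\[
Du\llcorner J_u=(u^+-u^-)\otimes\nu_u\,\mathcal{H}^{N-1}\llcorner J_u .
\]
The sign and orientation convention must match the definition of the traces stated above.

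Finally I would define $D^cu\coloneqq D^su\llcorner(\Omega\setminus S_u)$. This measure is singular with respect to $\mathcal{L}^N$, and by the dimension estimate it vanishes on sets of $\sigma$-finite $\mathcal{H}^{N-1}$ measure. Summing the three parts then yields the stated decomposition.
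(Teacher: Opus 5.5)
The paper does not prove this statement: it is recalled as a standard preliminary from \cite{AFP}. Your outline is a correct sketch of that standard proof: Calder\'on--Zygmund for $D^au=\nabla u\,\mathcal{L}^N$, coarea plus De Giorgi/Federer for the rectifiability of $S_u$, Federer--Vol'pert for the jump part, and $D^cu=D^su\llcorner(\Omega\setminus S_u)$.

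The one place you depart from \cite{AFP} is the jump step. There, one first establishes $L^1$ one-sided traces $\mathcal{H}^{N-1}$-a.e.\ along an arbitrary countably $\mathcal{H}^{N-1}$-rectifiable set $\Gamma$, together with $Du\llcorner\Gamma=(u^+_\Gamma-u^-_\Gamma)\otimes\nu_\Gamma\,\mathcal{H}^{N-1}\llcorner\Gamma$, and then takes $\Gamma=S_u$. You instead blow up the nested level sets, which is closer to the Evans--Gariepy argument. Your route shows the half-space limit and the common orientation very transparently. By itself, however, it only gives convergence \emph{in measure} of $u(x+\rho\,\cdot)$, while $J_u$ is defined through $L^1$ averages on half balls.

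You therefore need an additional lemma, which is standard but has to be stated. At $\mathcal{H}^{N-1}$-a.e.\ $x$, the approximate upper and lower limits of each component must be finite; this follows from a covering argument with the relative isoperimetric inequality, using $\liminf_{t\to\pm\infty}\mathrm{Per}(\{u_i>t\};\Omega)=0$. In addition, the rescalings must be equi-integrable on $B_1$. This follows from Sobolev--Poincar\'e combined with $\limsup_{\rho\to0}\rho^{1-N}|Du|(B_\rho(x))<\infty$, a bound that holds at $\mathcal{H}^{N-1}$-a.e.\ point of $\Omega$, not only on rectifiable sets.

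The same lemma is also what justifies your inclusion of $S_u$ in the union of the reduced boundaries up to an $\mathcal{H}^{N-1}$-null set. Points where $u$ converges in measure but not in $L^1$ average, or converges to $\pm\infty$, are invisible to the level sets.
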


\begin{theorem}[Chain rule in $BV$]
    Let $u \in BV(\Omega; \R^M)$, and let $f : \R^M \to \R^k$ be a Lipschitz continuous function. Then $v \coloneqq f \circ u$ belongs to $BV(\Omega; \R^k)$ and 
    \begin{equation*}
        D v = \nabla f(u) \nabla u\, \mathcal{L}^N + (f(u^+) - f(u^-)) \otimes \nu_u \mathcal{H}^{N-1} \llcorner J_u + \nabla f (\widetilde{u}) D^c u,
    \end{equation*}
    where $\widetilde{u}$ is the precise representative of $u$, defined everywhere except on $J_u$.
\end{theorem}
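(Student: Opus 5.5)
This is the classical chain rule (see \cite{AFP}), so the plan is to recall its standard proof. I would first treat $f\in C^1(\R^M;\R^k)$ Lipschitz, and only then pass to general Lipschitz $f$.

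\emph{Step 1: $v\in BV$ with $|Dv|\le \mathrm{Lip}(f)|Du|$.} Take $u_j\in C^\infty(\Omega;\R^M)$ with $u_j\to u$ in $L^1$ and $|Du_j|(\Omega)\to|Du|(\Omega)$, given by the Anzellotti--Giaquinta strict approximation. Then $f\circ u_j\to f\circ u$ in $L^1$ and $|\nabla(f\circ u_j)|\le \mathrm{Lip}(f)|\nabla u_j|$. Lower semicontinuity of the total variation, localized to open sets, gives $v\in BV(\Omega;\R^k)$ and $|Dv|\le \mathrm{Lip}(f)|Du|$ as measures. In particular $Dv\ll|Du|$, so it suffices to identify the density of $Dv$ on the three mutually singular pieces $\mathcal{L}^N$, $\mathcal{H}^{N-1}\llcorner J_u$ and $D^cu$.

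\emph{Step 2: absolutely continuous and jump parts.}
\begin{itemize}
\item At $\mathcal{L}^N$-a.e.\ point, $u$ is approximately differentiable with approximate gradient $\nabla u$. Composing with the $C^1$ map $f$ shows that $v$ is approximately differentiable with gradient $\nabla f(u)\nabla u$. Since for BV functions the approximate gradient coincides with the density of $D^a$, this gives the first term.
\item At $x\in J_u$, the one-sided Lebesgue limits of $u$ pass through the continuous $f$. Hence $v$ has one-sided limits $f(u^\pm)$ with the same normal $\nu_u$. Either $x\in J_v$, or $f(u^+)=f(u^-)$ and the formula is trivially consistent. The Federer--Vol'pert structure of $D^jv$ then gives $(f(u^+)-f(u^-))\otimes\nu_u\,\mathcal{H}^{N-1}\llcorner J_u$.
\end{itemize}

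\emph{Step 3: Cantor part.} Here I would blow up at $|D^cu|$-a.e.\ point $x$. Such points lie outside $S_u$, so $\widetilde u(x)$ is defined and $u$ approximately converges to $\widetilde u(x)$. Alberti's rank-one theorem and the structure of the blow-ups show that the rescaled functions $(u(x+\rho\cdot)-\widetilde u(x))/(\text{scale})$ converge to a one-dimensional monotone profile. Linearizing $f$ at $\widetilde u(x)$, which uses the continuity of $\nabla f$, the rescalings of $v$ converge to $\nabla f(\widetilde u(x))$ applied to that profile. This identifies $\frac{dD^cv}{d|D^cu|}(x)=\nabla f(\widetilde u(x))\frac{dD^cu}{d|D^cu|}(x)$.

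\emph{Step 4: general Lipschitz $f$.} I expect this to be the main obstacle. Once $f$ is only Lipschitz, $\nabla f(\widetilde u)$ need not be defined $|D^cu|$-a.e., since $\widetilde u$ may concentrate on the non-differentiability set of $f$. The correct statement, due to Ambrosio and Dal Maso, interprets $\nabla f(\widetilde u)D^cu$ as the tangential derivative of $f$ restricted to the affine space $\widetilde u(x)+\mathrm{span}\,\frac{dD^cu}{d|D^cu|}$, which exists $|D^cu|$-a.e.\ (and analogously for the absolutely continuous part). I would follow their argument. Mollify, $f_\eta=f*\rho_\eta$, and apply Steps~1--3 to $f_\eta$. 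Then pass to the limit using the uniform bound $|Df_\eta\circ u|\le \mathrm{Lip}(f)|Du|$ and the convergence of directional derivatives of $f_\eta$ along the rank-one directions. For the use made of this result in the paper (e.g.\ compactness via compositions like $\min\{|u-a|,|u-b|\}$-type maps), it is enough to know that $f\circ u\in BV$ with $|D(f\circ u)|\le\mathrm{Lip}(f)|Du|$, which Step~1 already provides.
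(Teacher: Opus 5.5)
The paper does not prove this statement. It only recalls it from \cite{AFP}, where the formula is stated for $f$ of class $C^1$ and Lipschitz. As you correctly point out, the merely Lipschitz case is the Ambrosio--Dal Maso chain rule, with tangential derivatives. Your Steps 1--3 are a sound outline of the $C^1$ case. In Step 3, Alberti's theorem is not even needed: linearizing $f$ at $\widetilde u(x)$, together with the $L^1$-precompactness of the normalized blow-ups, already gives $v_\rho-\nabla f(\widetilde u(x))u_\rho\to0$ in $L^1(B_1)$, and this identifies $\frac{dDv}{d|Du|}(x)$.

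The gap is in Step 4. After mollifying you get $D(f_\eta\circ u)\rightharpoonup^* D(f\circ u)$. To identify the limit you need $\nabla f_\eta(\widetilde u(x))\,\xi(x)$ to converge $|D^cu|$-a.e.\ to the tangential derivative of $f$ at $\widetilde u(x)$. This fails in general: the values $\widetilde u(x)$ typically fill an $\mathcal L^M$-null set, and on such a set mollified gradients carry no information.

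For a concrete example on $\R^2$, set $f(p)=\psi(2^k|p_2|)\,2^{-k}\sigma(2^kp_1)$ on each layer $2^{-k}<|p_2|<2^{1-k}$, $k\in\mathbb Z$, with $\sigma(\tau)=\mathrm{dist}(\tau,2\mathbb Z)$ and $\psi\in C^\infty_c((1,2))$. Then:
\begin{enumerate}
\item $f$ is Lipschitz and $|f(p)|\le|p_2|$, so $f(\cdot,0)\equiv0$.
\item The scaling $f(2^{-j}p)=2^{-j}f(p)$ gives $\partial_1f_{2^{-j}}(t,0)=G(2^jt)$ for a fixed $1$-periodic $G$, which is non-zero for a suitable $\psi$.
\item Since $2^jt$ is equidistributed mod $1$ for a.e.\ $t$, $G(2^jt)$ diverges for a.e.\ $t$.
\end{enumerate}
Now take $u=(c(x_1),0)$ with $c$ the Cantor function. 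The densities $\nabla f_\eta(\widetilde u)\frac{dD^cu}{d|D^cu|}$ diverge $|D^cu|$-a.e., although $f\circ u\equiv0$. Taking $u=(x_1,0)$ breaks the absolutely continuous part in the same way.

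Weak-$*$ compactness only returns some bounded density of $D(f\circ u)$ with respect to $|Du|$, which Step 1 already gives; identifying that density is the actual content of the theorem. The Ambrosio--Dal Maso approach does not approximate $f$; it reads the tangential derivative off $v$ itself. At $\mathcal L^N$-a.e.\ $x$ where both $u$ and $v=f\circ u$ are approximately differentiable, the Lipschitz continuity of $f$ transfers the first-order expansion of $v$ to $f$ on $u(x)+\mathrm{range}\,\nabla u(x)$. The Cantor part is treated analogously, for example by slicing to one dimension.

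Finally, your closing remark does not match how the paper uses the result. In the proof of Theorem~\ref{thm:compactness}, the absolutely continuous part of $Dz_n$, with $z_n=\dd_{\tilde W}(-e_1,w_n)$, is bounded by $\int_\Omega 2\sqrt{\widetilde W(w_n)}|\nabla w_n|\dd x$. That is a bound by the pointwise slope of $\dd_{\tilde W}(-e_1,\cdot)$ at $w_n(x)$. The global bound $|Dz_n|\le\mathrm{Lip}\,|Dw_n|$ would instead leave $C(1+2K)\int_\Omega|\nabla w_n|\dd x$, which the energy does not control. What is actually needed is
\[
|\nabla z_n(x)|\le\lim_{r\to0}\mathrm{Lip}\big(\dd_{\tilde W}(-e_1,\cdot);B_r(w_n(x))\big)\,|\nabla w_n(x)| \quad\text{for a.e. } x.
\]
Your Step 2 argument yields this once it is run with local Lipschitz constants in place of $\nabla f$.
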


\begin{definition}[Variation along a direction]
    Let $u \in L^1 (\Omega; \R^M)$ and $\nu \in \mathbb{S}^{N-1}$. We define the \emph{total variation} of $u$ along the direction $\nu$ as 
    \begin{equation*}
        V_\nu (u, \Omega) = \sup \left\{ \sum_{i=1}^M \int_\Omega u_i \frac{\partial \varphi_i}{\partial \nu} \, \dd x : \varphi \in C_c^\infty (\Omega; \R^{M \times N}), \| \varphi \|_\infty \leq 1 \right\}.
    \end{equation*}
\end{definition}

Given now $\nu \in \mathbb{S}^{N-1}$, we denote by $\pi_\nu$ the hyperplane orthogonal to $\nu$, and by $\Omega^\nu$ the projection of $\Omega$ onto $\pi_\nu$. Now, for any $y \in \Omega^\nu$, we define a slice of $\Omega$ as
\begin{equation*}
    \Omega^\nu_y \coloneqq \left\{ t \in \R : y + t \nu \in \Omega \right\}.
\end{equation*}
\begin{figure}[ht]
        \includegraphics[width=0.4\linewidth]{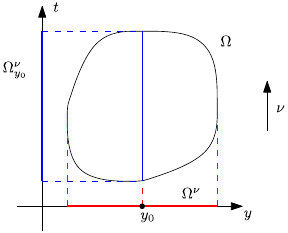}
    \caption{Example of slicing.}
    \label{fig:slicing}
\end{figure}

Analogously, for $u \colon \Omega \to \R^M$, for any $y \in \Omega^\nu$ we define a slice of $u$ as 
\begin{equation*}
    u_y^\nu \colon \Omega_y^\nu \to \R^M, \qquad u_y^\nu (t) \coloneqq u(y + t \nu).
\end{equation*}

\begin{theorem}[Characterization of $BV$ by sections]
    Let $u \in L^1 (\Omega; \R^M)$ and $\nu \in \mathbb{S}^{N-1}$. Then
    \begin{equation}
        V_\nu (u, \Omega) = \int_{\Omega^\nu} V( u_y^\nu, \Omega_y^\nu) \, \dd y.
    \end{equation}
    Moreover, $u \in BV(\Omega; \R^M)$ if and only if there exist $N$ linearly independent unit vectors $\nu_1, .., \nu_N$ such that $u_y^{\nu_j} \in BV(\Omega_y^{\nu_j}; 
    \R^M)$ for $\mathcal{H}^{N-1}$-a.e. $y \in \Omega^{\nu_j}$, and
    \begin{equation}
        \int_{\Omega^{\nu_j}} V (u_y^{\nu_j}, \Omega_y^{\nu_j}) \, \dd y < \infty \qquad \forall j = 1, \ldots, N.
    \end{equation}
\end{theorem}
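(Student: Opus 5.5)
This result is classical (cf.\ \cite{AFP}). I would prove it by Fubini's theorem for the easy inequality and by mollification plus lower semicontinuity for the reverse one. The structural part then follows from the fact that directional derivatives along a basis determine the full derivative.

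\emph{Inequality $V_\nu(u,\Omega)\le\int_{\Omega^\nu}V(u^\nu_y,\Omega^\nu_y)\,\dd y$.} Fix an admissible $\varphi\in C^\infty_c(\Omega;\R^{M\times N})$ with $\|\varphi\|_\infty\le1$. By Fubini,
\[
\sum_{i}\int_\Omega u_i\,\partial_\nu\varphi_i\,\dd x=\int_{\Omega^\nu}\sum_i\int_{\Omega^\nu_y}(u_i)^\nu_y(t)\,\frac{\dd}{\dd t}(\varphi_i)^\nu_y(t)\,\dd t\,\dd y .
\]
For every $y$, the slice $t\mapsto\varphi^\nu_y(t)$ is smooth, compactly supported in the open set $\Omega^\nu_y$, and bounded by $1$. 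Hence it is admissible in the one-dimensional definition of $V(u^\nu_y,\Omega^\nu_y)$, and the inner integral is at most $V(u^\nu_y,\Omega^\nu_y)$. Taking the supremum over $\varphi$ gives the inequality. Here I would tacitly use that $y\mapsto V(u^\nu_y,\Omega^\nu_y)$ is measurable. This holds because it is a supremum over a countable dense family of test functions of measurable functions of $y$; I would arrange this by exhausting $\Omega$ with compact sets.

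\emph{Reverse inequality.} We may assume $V_\nu(u,\Omega)<\infty$. Fix $\Omega'\subset\subset\Omega$ and set $u_\varepsilon\coloneqq u*\rho_\varepsilon$ on $\Omega'$. For smooth functions the identity is immediate: by duality $V_\nu(u_\varepsilon,\Omega')=\int_{\Omega'}|\partial_\nu u_\varepsilon|\,\dd x$, which by Fubini equals $\int V((u_\varepsilon)^\nu_y,\Omega'^\nu_y)\,\dd y$. The norm on the matrix values inside $|\partial_\nu u_\varepsilon|$ is the one dual to the constraint $\|\varphi\|_\infty\le1$. Moreover, translating test functions shows $V_\nu(u_\varepsilon,\Omega')\le V_\nu(u,\Omega)$ for $\varepsilon<\mathrm{dist}(\Omega',\partial\Omega)$.

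Since $u_\varepsilon\to u$ in $L^1(\Omega')$, Fubini lets us extract a subsequence with $(u_\varepsilon)^\nu_y\to u^\nu_y$ in $L^1(\Omega'^\nu_y)$ for a.e.\ $y$. The one-dimensional variation is $L^1$-lower semicontinuous, so Fatou's lemma gives
\[
\int V(u^\nu_y,\Omega'^\nu_y)\,\dd y\le\liminf_{\varepsilon\to0}V_\nu(u_\varepsilon,\Omega')\le V_\nu(u,\Omega).
\]
Letting $\Omega'\uparrow\Omega$, the slices increase and $V(u^\nu_y,\cdot)$ is monotone in the domain, so monotone convergence concludes. I expect this step to be the main technical point: the slice domains vary with $y$, and the subsequence extraction and measurability must be handled carefully.

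\emph{Characterization.} Suppose $u\in BV(\Omega;\R^M)$. Then $V_\nu(u,\Omega)\le|Du|(\Omega)<\infty$ for every $\nu$, so the formula shows that a.e.\ slice has finite variation and that the integral is finite. Conversely, suppose the integrals are finite for $N$ linearly independent directions $\nu_1,\dots,\nu_N$. The formula gives $V_{\nu_j}(u,\Omega)<\infty$, so the distributional derivatives $\partial_{\nu_j}u$ are finite vector Radon measures, by Riesz representation. Every partial derivative $\partial_k u$ is a fixed linear combination of the $\partial_{\nu_j}u$, so $Du$ is a finite Radon measure and $u\in BV(\Omega;\R^M)$.
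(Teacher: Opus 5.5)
Your proof is correct and follows the standard argument. The paper does not prove this statement; it recalls it from the slicing theory in Section 3.11 of \cite{AFP}, where it is established along the same lines: Fubini for $V_\nu(u,\Omega)\le\int_{\Omega^\nu}V(u^\nu_y,\Omega^\nu_y)\,\dd y$, then mollification on $\Omega'\subset\subset\Omega$, lower semicontinuity of the one-dimensional variation, Fatou's lemma and exhaustion for the reverse inequality, and finally writing each $D_k u$ as a linear combination of the $D_{\nu_j}u$ for the characterization.

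One cosmetic point: the test functions in the definition of $V_\nu$ should be $\R^M$-valued (the $\R^{M\times N}$ in the paper's definition is a slip). Then $\partial_\nu u_\varepsilon$ is vector-valued rather than matrix-valued, and the relevant pointwise norm is the Euclidean one, the same norm used in the one-dimensional variation.
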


We focus now on the functions of bounded variations which are characteristic functions of sets.

\begin{definition}[Finite perimeter set]
    Let $E \subseteq \Omega$. We say that $E$ has \emph{finite perimeter in $\Omega$} if its characteristic function $\mathbbm{1}_E \colon \Omega \to \{0,1\}$ has bounded variation on $\Omega$. Equivalently, we can say $E$ has finite perimeter in $\Omega$ if
    \begin{equation*}
        \mathrm{Per}(E; \Omega) \coloneqq \sup \left\{  \int_E \mathrm{div} \varphi \, \dd x : \varphi \in C_c^\infty (\Omega; \R^N), \| \varphi \|_\infty \leq 1 \right\} < +\infty.
    \end{equation*}
\end{definition}

\begin{definition}[Reduced boundary]
    Let $E \subseteq \Omega$ be a set of finite perimeter. We define its \emph{reduced boundary} $\partial^* \!E$ as the subset of $\partial E$ for which the limit 
    \begin{equation*}
        \nu_E (x) \coloneqq - \lim_{r \to 0^+} \frac{D\mathbbm{1}_E (B(x,r))}{| D \mathbbm{1}_E | (B(x,r))},
    \end{equation*}
    exists and it is such that $| \nu_E (x) | = 1$. The vector $\nu_E (x)$ is called \emph{measure theoretic exterior normal} to $E$.
\end{definition}

\begin{theorem}[De Giorgi's structure theorem]
    Let $E \subseteq \Omega $ be a set of finite perimeter. Then $\partial^* \! E$ is countably $\mathcal{H}^{N-1}$-rectifiable and
    \begin{equation}
        \mathrm{Per}(E; \Omega) = \mathcal{H}^{N-1}(\partial^* \! E \cap \Omega).
    \end{equation}
    Moreover, for $x_0 \in \partial^* \! E$ and $r > 0$, by defining the blow-up set $E_r$ and the half-plane $H$ as
    \begin{equation*}
        E_r \coloneqq \frac{E - x_0}{r}, \qquad H \coloneqq \{x \in \R^N : x \cdot \nu_E (x) \geq 0 \},
    \end{equation*}
    we get that
    $$ E_r \to H \quad \text{in } L^1_\text{loc} (\Omega), \qquad \lim_{r \to 0^+} \frac{\mathcal{H}^{N-1}( \partial^* \! E \cap Q(x_0,r))}{r^{N-1}} = 1. $$
\end{theorem}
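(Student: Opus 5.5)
The statement is the classical De Giorgi structure theorem. I would follow the standard route through blow-ups (as in \cite{AFP}), proving the blow-up statement first and deducing rectifiability and the identity $\mathrm{Per}(E;\Omega)=\mathcal{H}^{N-1}(\partial^*\!E\cap\Omega)$ from it.

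\textbf{Step 1: the normal and density estimates.} Write $\mu\coloneqq|D\mathbbm{1}_E|$. By the Besicovitch differentiation theorem applied to the vector measure $D\mathbbm{1}_E$ and its total variation, the limit defining $\nu_E$ exists and has unit length for $\mu$-a.e. $x\in\Omega$. Hence $\mu(\Omega\setminus\partial^*\!E)=0$ and $D\mathbbm{1}_E=-\nu_E\,\mu$. Next, at each $x_0\in\partial^*\!E$ I would prove two-sided density estimates. Compare $E$ with $E\setminus B(x_0,r)$ and $E\cup B(x_0,r)$. The fact that $|D\mathbbm{1}_E(B(x_0,r))|/\mu(B(x_0,r))\to1$ gives $\mu(B(x_0,r))\le C\,\mathcal{H}^{N-1}(\partial B(x_0,r))$ for a.e. small $r$. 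The relative isoperimetric inequality, applied to $E\cap B(x_0,r)$ and $B(x_0,r)\setminus E$, then yields
\[
c\,r^N\le|E\cap B(x_0,r)|,\qquad c\,r^N\le|B(x_0,r)\setminus E|,\qquad c\,r^{N-1}\le\mu(B(x_0,r))\le C\,r^{N-1}.
\]

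\textbf{Step 2: the blow-up is a half-space.} Set $E_r=(E-x_0)/r$. The upper perimeter bound gives uniform local bounds on $\mathrm{Per}(E_r;B_R)$. By $BV_{\rm loc}$ compactness, a subsequence satisfies $E_{r_j}\to F$ in $L^1_{\rm loc}$, and the measures $D\mathbbm{1}_{E_{r_j}}$ converge weakly-$*$ to $D\mathbbm{1}_F$. Since $\nu_{E_r}\to\nu_E(x_0)$ in the averaged sense, lower semicontinuity together with the ratio condition forces $D\mathbbm{1}_F=-\nu_E(x_0)|D\mathbbm{1}_F|$. Thus $\mathbbm{1}_F$ is monotone in the direction $\nu_E(x_0)$ and constant in orthogonal directions, so $F$ is a half-space orthogonal to $\nu_E(x_0)$. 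The density estimates of Step 1 rule out $F=\emptyset$ and $F=\R^N$, and the sign convention in the definition of $\nu_E$ fixes which side $F$ occupies; this is the half-space $H$ of the statement. The limit is independent of the subsequence, so the whole family converges.

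\textbf{Step 3: convergence of perimeters.} I expect this to be the main obstacle, since weak-$*$ convergence alone only gives lower semicontinuity and no upper bound. Here I would use that $\int\varphi\cdot\nu_E(x_0)\,\dd|D\mathbbm{1}_{E_r}|\to\int\varphi\cdot\nu_E(x_0)\,\dd|D\mathbbm{1}_H|$ for nonnegative scalar $\varphi$, which follows from the weak convergence of $D\mathbbm{1}_{E_r}$. Combining this with $\nu_{E_r}\cdot\nu_E(x_0)\to1$ in $\mu$-average gives $\mu_{E_r}(Q)\to\mathcal{H}^{N-1}(\partial H\cap Q)$ for the unit cube $Q$, whose boundary is not charged by the limit. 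This is exactly the density-one statement $\mathcal{H}^{N-1}(\partial^*\!E\cap Q(x_0,r))/r^{N-1}\to1$, once the last step identifies $\mu$ with $\mathcal{H}^{N-1}\llcorner\partial^*\!E$.

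\textbf{Step 4: rectifiability and the perimeter formula.} From Step 3, $\mu(Q(x_0,r))/r^{N-1}\to1$ at every point of $\partial^*\!E$. The standard comparison theorems between a Radon measure and Hausdorff measure under upper and lower density bounds then give $\mu=\mathcal{H}^{N-1}\llcorner\partial^*\!E$, which is $\mathrm{Per}(E;\Omega)=\mathcal{H}^{N-1}(\partial^*\!E\cap\Omega)$. For rectifiability, the blow-up gives uniform convergence of $\nu_E$ and flat approximation at each point. By Egorov's theorem, on large compact subsets of $\partial^*\!E$ this holds uniformly, and each such set is then covered by finitely many Lipschitz graphs over hyperplanes orthogonal to nearby normals. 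A countable exhaustion concludes that $\partial^*\!E$ is countably $\mathcal{H}^{N-1}$-rectifiable.
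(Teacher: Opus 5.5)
The paper does not prove this statement. It is recalled in the preliminaries as De Giorgi's theorem, with a reference to \cite{AFP}, so your outline can only be compared with the textbook proof. Its structure matches that proof: Besicovitch for the normal, density estimates via the isoperimetric inequality, compactness and identification of the blow-up, upgrading weak-$*$ convergence of $D\mathbbm{1}_{E_r}$ to convergence of total variations, then rectifiability and the perimeter formula. Two points need fixing.

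\textbf{Orientation of the half-space.} The paper's convention is $\nu_E(x)=-\lim_{r\to0^+}D\mathbbm{1}_E(B(x,r))/|D\mathbbm{1}_E|(B(x,r))$, i.e.\ the exterior normal. With it, your own Step 2 gives $D\mathbbm{1}_F=-\nu_E(x_0)|D\mathbbm{1}_F|$, so $\mathbbm{1}_F$ is non-increasing in the direction $\nu_E(x_0)$.

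The density estimates, passed to the limit at every scale, force $0\in\partial F$. This is what they are really needed for, not only to exclude $F=\emptyset$ and $F=\R^N$. You then get $F=\{x\in\R^N: x\cdot\nu_E(x_0)<0\}$. Up to a null set, this is the complement of the $H$ written in the statement. For instance, $E=\{x_N<0\}$ has $\nu_E=e_N$ and $E_r=E$ for all $r$.

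So your sentence ``the sign convention fixes which side $F$ occupies; this is the half-space $H$ of the statement'' is false: the statement has a sign slip. The statement should also be corrected in three further places:
\begin{itemize}
\item $\nu_E(x_0)$ in place of $\nu_E(x)$;
\item convergence in $L^1_{\rm loc}(\R^N)$ rather than $L^1_{\rm loc}(\Omega)$;
\item cubes $Q(x_0,r)$ with two faces orthogonal to $\nu_E(x_0)$, since otherwise your Step 3 limit $\mathcal{H}^{N-1}(\partial H\cap Q)$ is in general not $1$.
\end{itemize}
You should state and prove the corrected version explicitly, rather than assert agreement with the printed one.

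\textbf{Order of Step 4.} Feeding density exactly $1$ at every point of $\partial^*E$ into the density comparison theorems only yields $\mathcal{H}^{N-1}\llcorner\partial^*E\le\mu\le 2^{N-1}\mathcal{H}^{N-1}\llcorner\partial^*E$. Those theorems alone do not remove the factor, because they say nothing about the density of $\mathcal{H}^{N-1}\llcorner\partial^*E$ itself.

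The standard fix is to prove rectifiability first. Your Lusin/Egorov plus flatness argument does this. It needs only the lower bound $\mathcal{H}^{N-1}\llcorner\partial^*E\le\mu$, to see that the $\mu$-null exceptional set left by Egorov is also $\mathcal{H}^{N-1}$-null.

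Then use that a countably $\mathcal{H}^{N-1}$-rectifiable set of finite measure has $\mathcal{H}^{N-1}$-density one almost everywhere. Since $\mu\ll\mathcal{H}^{N-1}\llcorner\partial^*E$ by the upper bound, Besicovitch differentiation then gives $d\mu/d(\mathcal{H}^{N-1}\llcorner\partial^*E)=1$.

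With these two corrections the argument is the standard proof in \cite{AFP}.
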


We conclude this subsection by stating the coarea formula for $(N-1)$-rectifiable sets in $\R^N$, suitably modified for our setting. We refer to \cite[Theorem 2.93]{AFP} for the general result.
\begin{theorem}[Coarea]
    Let $E \subset \R^N$ be a countably $\mathcal{H}^{N-1}$-rectifiable set. Let $f \colon E \to \R$ be a Lipschitz continuous function, and let $g : E \to [0,+\infty]$ be a Borel function. Then
    \begin{equation}
        \int_E g(x) | \nabla^E f(x) | \, \dd \mathcal{H}^{N-1} (x) = \int_{\R} \left( \int_{E \cap \{ f = t \}} g(y) \, \dd \mathcal{H}^{N-2} (y) \right) \dd t,
    \end{equation}
    where $\nabla^E f(x)$ represents the tangential gradient of $f$ with respect to the approximate tangent space $T_x E$.
\end{theorem}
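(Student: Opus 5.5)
Since this statement is quoted from \cite[Theorem 2.93]{AFP}, my plan is to rebuild it from the Euclidean coarea formula and the area formula by a chart argument. Both sides are countably additive in $E$. They are also monotone in $g$. So it suffices to prove the identity for $g=\mathbbm{1}_B$ with $B\subset E$ Borel, and to pass to general nonnegative Borel $g$ by simple functions and monotone convergence.

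\textbf{Step 1: decomposition and null sets.} By rectifiability, I write $E = Z\cup\bigcup_i E_i$ with the $E_i$ pairwise disjoint Borel sets and $\mathcal{H}^{N-1}(Z)=0$. Each $E_i$ lies in an embedded $C^1$ hypersurface $M_i$ covered by a single chart $\varphi_i\colon U_i\subset\R^{N-1}\to M_i$, which I take bi-Lipschitz. For $\mathcal{H}^{N-1}$-a.e.\ $x\in E_i$, the approximate tangent space $T_xE$ coincides with $T_xM_i$.

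I extend $f$ to a Lipschitz function on $\R^N$ (McShane). Then $\nabla^E f(x)$ is the tangential gradient of $f|_{M_i}$ at $\mathcal{H}^{N-1}$-a.e.\ $x\in E_i$. This uses Rademacher on the chart, together with the density property which ensures the tangential derivative depends only on $f|_E$.

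The null set $Z$ contributes zero to the left-hand side. On the right-hand side I use the Eilenberg inequality
\[
\int_\R^* \mathcal{H}^{N-2}(Z\cap\{f=t\})\,\dd t\le C\,\mathrm{Lip}(f)\,\mathcal{H}^{N-1}(Z)=0,
\]
so $Z$ also contributes zero there. This gives the reduction to a single piece $E_i$.

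\textbf{Step 2: the chart computation.} Set $h\coloneqq f\circ\varphi_i$, which is Lipschitz on $U_i$, and $A\coloneqq\varphi_i^{-1}(B\cap E_i)$.

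First, the area formula gives
\[
\int_{B\cap E_i}|\nabla^E f|\,\dd\mathcal{H}^{N-1} = \int_A |\nabla^E f(\varphi_i)|\, J\varphi_i\,\dd y .
\]

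Next, the Euclidean coarea formula in $\R^{N-1}$ applied to $h$ with weight $\mathbbm{1}_A\,\Theta$ gives
\[
\int_A \Theta\, |\nabla h|\,\dd y=\int_\R\int_{A\cap\{h=t\}}\Theta\,\dd\mathcal{H}^{N-2}\,\dd t.
\]
Here $\Theta$ is a weight to be chosen. On a.e.\ level set $\{h=t\}$, which is $(N-2)$-rectifiable, the area formula for $\varphi_i$ restricted to it turns the inner integral into $\int_{B\cap E_i\cap\{f=t\}}\dd\mathcal{H}^{N-2}$, provided I take $\Theta = J^{N-2}(\varphi_i|_{\{h=t\}})$.

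\textbf{Step 3: the Jacobian identity, the main obstacle.} What remains is the pointwise linear-algebra identity
\[
|\nabla^E f(\varphi_i)|\,J\varphi_i = |\nabla h|\; J^{N-2}\big(d\varphi_i|_{\ker dh}\big)
\]
at a.e.\ point where $\nabla h\neq0$. To prove it, let $L=d\varphi_i(y)$ and $\tau=\nabla^E f$, so that $\nabla h=L^T\tau$. I decompose $\R^{N-1}=\ker dh\oplus\mathrm{span}(\nabla h)$ and compute $J L$ as the $(N-2)$-Jacobian on $\ker dh$ times the normal component of the image of the unit vector $\nabla h/|\nabla h|$, orthogonally to $L(\ker dh)$. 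That normal component is exactly $|\nabla h|/|\tau|$, since $L(\ker dh)=\tau^\perp\cap T_xM_i$.

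Where $\nabla h=0$ we also have $\nabla^E f=0$. On that set both sides vanish: the left-hand side trivially, and the right-hand side by the Euclidean coarea formula, which makes the critical set negligible on a.e.\ level. I expect the delicate points to be this Jacobian bookkeeping and making sure every "a.e." is taken with respect to the correct measure on each level set. Summing over $i$ then concludes.
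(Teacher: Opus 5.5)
The paper gives no proof of this statement; it simply quotes it (in the scalar case) from \cite[Theorem 2.93]{AFP}. You instead rebuild it along the classical route, and the argument is correct.

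The route is as follows. Cover $E$, up to an $\mathcal{H}^{N-1}$-null set, by bi-Lipschitz $C^1$ charts. Discard the null set on both sides with Eilenberg's inequality. Pull the left-hand side back by the area formula, and apply the Euclidean coarea formula to $h=f\circ\varphi_i$. Then push the level sets forward with the area formula for $(N-2)$-rectifiable sets.

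Your Step~3 computation is right. With $L=d\varphi_i(y)$, $T=L(\R^{N-1})$ and $\tau\in T\setminus\{0\}$, one has $L(\ker dh)=\tau^\perp\cap T$, and the unit normal to $L(\ker dh)$ inside $T$ is $\tau/|\tau|$. So the height of $Le$, with $e=\nabla h/|\nabla h|$, is $|\tau\cdot Le|/|\tau|=|\nabla h|/|\tau|$. Moreover $\nabla h=L^T\tau=0$ forces $\tau=0$, because $\ker L^T=T^\perp$.

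The citation buys brevity and the vector-valued version. Your reconstruction shows that, beyond the Euclidean formula, the only new ingredients are this base-times-height identity and null-set bookkeeping.

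One step should be stated explicitly rather than absorbed into notation. The Euclidean coarea formula needs a single measurable weight on $U_i$, namely $\Theta(y)=J^{N-2}(d\varphi_i(y)|_{\ker dh(y)})$ wherever $\nabla h(y)$ exists and is nonzero. The area formula on $A\cap\{h=t\}$, however, produces the Jacobian of $d\varphi_i$ restricted to the approximate tangent space of the level set. Matching the two requires the following: for a.e.\ $t$ and $\mathcal{H}^{N-2}$-a.e.\ $y\in\{h=t\}$, $h$ is differentiable at $y$, $\nabla h(y)\neq0$, and $\mathrm{Tan}^{N-2}(\{h=t\},y)=\ker dh(y)$.

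This fact is standard and can be proved as follows:
\begin{itemize}
\item take $C^1$ functions $h_j$ agreeing with $h$ on closed sets $F_j$ that exhaust $U_i$ up to a null set;
\item apply the implicit function theorem on $\{\nabla h_j\neq0\}$, using $\nabla h=\nabla h_j$ at density points of $F_j$;
\item remove the exceptional null set, and $\{\nabla h=0\}$, on a.e.\ level via Eilenberg and the coarea formula.
\end{itemize}
It is also where the rectifiability of a.e.\ level set invoked in your Step~2 actually comes from. The same chain in Step~2 gives the measurability of $t\mapsto\mathcal{H}^{N-2}(B\cap\{f=t\})$, which your monotone-convergence reduction uses tacitly.

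Finally, the paper's use of this result (Step~1 of the proof of Theorem~\ref{thm:liminf}, with $f(x)=x'$) is the $\R^{N-1}$-valued version with $\mathcal{H}^0$ on the fibres, not the scalar statement. Your scheme covers that case once the rank-one computation is replaced by the corresponding identity for $m$-dimensional Jacobians.
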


Finally, we recall that sets of finite perimeter can be approximated strongly thanks to a result proved by de Gromard in \cite{DeGrom}.

\begin{theorem}\label{thm:strong_BV}
Let $A\subset \R^M$ be an open set, and let $E\subset A$ be a set of finite perimeter in $A$.
Then, for each $\varepsilon>0$ there exist a set $F\subset A$ of finite perimeter in $A$, and a compact set $C\subset A$ such that the following holds:
\begin{itemize}
\item[(i)] $\partial F\cap A$ is contained in a finite union of $C^1$ hypersurfaces;
\item[(ii)] $\| \ca_{E} - \ca_{F} \|_{BV(A)}<\varepsilon$;
\item[(iii)] $\hno(\partial F\cap A\setminus \partial^* E)<\varepsilon$;
\item[(iv)] $F\subset E+B(0,\varepsilon)$, and
    $D\setminus F\subset (A\setminus E)+B(0,\varepsilon)$;
\item[(v)] $C \subset A\cap \partial^* E \cap \partial F$;
\item[(vi)] $\nu_E(x) = \nu_F(x)$ for all $x\in C$;
\item[(vii)] $|D\ca_E|(D\setminus C)<\varepsilon$.
\end{itemize}
\end{theorem}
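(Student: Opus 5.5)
This result is quoted from de Gromard \cite{DeGrom}, and we only sketch how we would prove it. The plan has three stages. First, select a large compact portion $C$ of $\partial^*E$ that lies on finitely many $C^1$ graphs. Second, replace $E$ near $C$ by subgraphs of those graphs. Third, replace $E$ away from $C$ by a smooth set of small perimeter, and glue the pieces. Throughout we fix $\eta\ll\varepsilon$ as a bookkeeping parameter.

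\textbf{Choosing $C$.} By De Giorgi's structure theorem, $\partial^*E$ is countably $\mathcal{H}^{N-1}$-rectifiable. Hence, up to an $\mathcal{H}^{N-1}$-null set, it is covered by countably many $C^1$ graphs $\Gamma_j$. At $\mathcal{H}^{N-1}$-a.e.\ point of $\partial^*E\cap\Gamma_j$, the approximate tangent plane of $\partial^*E$ coincides with the tangent plane of $\Gamma_j$, so $\nu_E=\pm\nu_{\Gamma_j}$ there. We keep finitely many graphs and discard the exceptional null set. By inner regularity of $|D\mathbbm 1_E|=\mathcal{H}^{N-1}\llcorner\partial^*E$ and Lusin's theorem applied to $\nu_E$, we then obtain a compact set $C\subset\partial^*E\cap\bigcup_{j\le J}\Gamma_j$ with $|D\mathbbm 1_E|(A\setminus C)<\eta$. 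On $C$ the normal $\nu_E$ is continuous and agrees with the graph normal, which will give (v), (vi) and (vii) once $F$ is built so that $C\subset\partial F$.

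\textbf{Local replacement near $C$.} Fix $x\in C$. The blow-up part of De Giorgi's theorem, combined with the density of $C$ in $\partial^*E$ and in $\Gamma_j$ at $\mathcal{H}^{N-1}$-a.e.\ $x$, gives two facts for all sufficiently small cylinders $Q$ centred at $x$ and oriented along $\nu_E(x)$:
\begin{itemize}
\item $\mathcal{L}^N\big(Q\cap(E\,\triangle\,\mathrm{sub}\,\Gamma_j)\big)\le\eta\, r^N$, where $\mathrm{sub}\,\Gamma_j$ is the subgraph of $\Gamma_j$;
\item $\mathcal{H}^{N-1}\big((\Gamma_j\cap Q)\setminus C\big)\le\eta r^{N-1}$ and $\mathcal{H}^{N-1}(\partial^*E\cap Q)\le(1+\eta)\,\omega_{N-1}r^{N-1}$.
\end{itemize}
By Fubini, we may also choose the radius so that $\mathcal{H}^{N-1}(\partial^*E\cap\partial Q)=0$ and the traces of $\mathbbm 1_E$ and $\mathbbm 1_{\mathrm{sub}\,\Gamma_j}$ on $\partial Q$ differ on a set of area at most $C\eta r^{N-1}$. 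A Vitali/Besicovitch covering then yields finitely many pairwise disjoint such cylinders $Q_i$, of diameter less than $\varepsilon$, that cover $C$ up to $|D\mathbbm 1_E|$-measure $\eta$. Inside each $Q_i$ we set $F=\mathrm{sub}\,\Gamma_{j(i)}$. This creates new boundary outside $\partial^*E$ only on $(\Gamma_j\cap Q_i)\setminus C$ and on the mismatch set on $\partial Q_i$, and both have total measure $O(\eta)$.

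\textbf{Away from $C$.} On $U:=A\setminus\bigcup_i\overline{Q_i}$ we take $E_\rho:=\{\mathbbm 1_E*\varphi_\rho>t\}$ with a mollifier $\varphi_\rho$. The level $t$ is chosen by Sard's theorem and the coarea formula so that $\partial E_\rho$ is smooth and
\[
\mathrm{Per}(E_\rho;U)\le \mathrm{Per}(E;U)+\eta\le 2\eta,
\]
the last bound because $U$ carries almost none of $|D\mathbbm 1_E|$. Moreover, for $t\in(0,1)$ we have $E_\rho\subset E+B(0,\rho)$ and $A\setminus E_\rho\subset(A\setminus E)+B(0,\rho)$, which gives (iv) on $U$. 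Inside the cylinders, (iv) holds automatically because they have diameter less than $\varepsilon$ and are centred on $\partial E$.

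\textbf{Gluing and conclusion.} Define $F$ to be $E_\rho$ on $U$ and the subgraphs inside the $Q_i$. Then $\partial F\cap A$ is contained in finitely many $C^1$ pieces: the $\Gamma_j$, the smooth level sets $\partial E_\rho$, and the faces of the cylinders. This gives (i). We also have $\mathcal{H}^{N-1}(\partial F\setminus\partial^*E)=O(\eta)$, which gives (iii). For (ii), the $L^1$ error is $O(\eta)$. The measure $D(\mathbbm 1_E-\mathbbm 1_F)$ cancels exactly on $C$, because there both the traces and the normals coincide. Its remaining part is bounded by
\[
|D\mathbbm 1_E|(A\setminus C)+\mathcal{H}^{N-1}(\partial F\setminus C)=O(\eta).
\]

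\textbf{Main obstacle.} The delicate step is the gluing across $\partial Q_i$. Along the cylinder faces, $\mathbbm 1_F$ has jumps wherever the trace of $E_\rho$ disagrees with the trace of the subgraph, and this could add perimeter that is not small. We would handle it by choosing the radii of the $Q_i$ and the mollification parameter $\rho$ jointly, averaging over radii via Fubini. The mismatch on $\partial Q_i$ is then controlled by $r^{-1}\mathcal{L}^N\big(Q_i\cap(E_\rho\,\triangle\,\mathrm{sub}\,\Gamma_j)\big)$, which is small by the blow-up estimate.
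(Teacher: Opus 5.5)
The paper gives no proof of this statement. It quotes it from de Gromard \cite{DeGrom} and uses it as a black box in Step 1 of Proposition~\ref{prop:approx_sets}. Your sketch therefore supplies an argument the paper omits, and its architecture is sound:
\begin{itemize}
\item a compact piece $C$ of $\partial^*E$ lying on finitely many $C^1$ graphs, on which $\nu_E$ is continuous (rectifiability plus Lusin);
\item replacement of $E$ by sub-graphs inside a finite disjoint Vitali family of small cylinders;
\item a mollified level set on the remainder $U$, which carries perimeter $O(\eta)$;
\item control of the trace mismatch on $\partial Q_i$.
\end{itemize}
Your route makes transparent why (ii) holds in the BV \emph{norm}, which the citation hides. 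A plain mollify-and-take-a-level-set approximation converges only strictly. Indeed, for a.e.\ level $t$ one has $\hno(\partial E_\rho\cap\partial^*E)=0$, hence $|D(\ca_E-\ca_{E_\rho})|(A)=\mathrm{Per}(E;A)+\mathrm{Per}(E_\rho;A)$. The norm estimate comes exactly from $\partial F$ coinciding with $\partial^*E$, with the same normal, on a set carrying almost all of $|D\ca_E|$. What the citation buys is full generality (arbitrary open $A$) with every technicality settled.

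Several points need tightening.
\begin{itemize}
\item The compact set in (v)--(vii) cannot be the $C$ you fix at the start. Points of $C$ outside $\bigcup_i Q_i$ lie in $U$, where $\partial F=\partial E_\rho$. Points of $C\cap Q_i$ lying on a graph $\Gamma_{j'}$ with $j'\neq j(i)$ are not on $\partial F$ either. Set instead $C':=\bigcup_i \bigl(C\cap\Gamma_{j(i)}\cap\overline{Q_i'}\bigr)$ with $Q_i'\Subset Q_i$ slightly smaller. Your bounds $\hno(\partial^*E\cap Q_i)\le(1+\eta)\omega_{N-1}r_i^{N-1}$ and $\hno((\Gamma_{j(i)}\cap Q_i)\setminus C)\le\eta r_i^{N-1}$ give $\hno\bigl((C\cap Q_i)\setminus\Gamma_{j(i)}\bigr)\lesssim\eta r_i^{N-1}$. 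Hence $|D\ca_E|(A\setminus C')=O(\eta)$, and the cancellation in (ii) takes place on $C'$.
\item The sub-graph orientation must match $\nu_E$ at every point of $C'\cap Q_i$, not only at the centre. To ensure this, take the cylinders below the scale of uniform continuity of $\nu_E|_C$ and of $\nu_{\Gamma_j}$; this is what Lusin is really for.
\item If you average radii over $(r/2,r)$, you discard a fixed fraction of $C\cap Q_i$. You can instead average over $((1-\eta)r,r)$ with blow-up tolerance $\eta^2$. More simply, fix radii with $\hno(\partial^*E\cap\partial Q_i)=0$ first. Then traces of $\ca_E*\varphi_\rho$ on $\partial Q_i$ converge in $L^1(\partial Q_i)$, by continuity of the trace under strict convergence, so the mismatch between the traces of $E_\rho$ and $E$ vanishes as $\rho\to0$.
\item A single mollification radius is not defined near $\partial A$ for a general open $A$. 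You need a Meyers--Serrin type smoothing $\sum_k\varphi_{\rho_k}*(\ca_E\psi_k)$. Choose each $\rho_k$ small relative to $\|\nabla\psi_k\|_\infty$, so that $\sum_k\varphi_{\rho_k}*\psi_k$ stays uniformly close to $1$; this is what keeps (iv) valid.
\end{itemize}
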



\subsection{$\Gamma$-convergence}

    This section provides a brief overview of the definition and fundamental properties of $\Gamma$-convergence. Given the setting of this paper, we define $\Gamma$-convergence using sequences. For a comprehensive treatment of $\Gamma$-convergence in general topological spaces, we refer the reader to \cite{Dalmasobook} (see also \cite{Braides}).

    \begin{definition}
        Let $(X, \mathrm{d})$ be a metric space. A sequence of functionals $F_n: X \to [-\infty, +\infty]$ is said to \emph{$\Gamma$-converge} to a functional $F: X \to [-\infty, +\infty]$ \emph{with respect to the metric $\mathrm{d}$}, denoted by $F_n \overset{\Gamma-d}{\longrightarrow} F$, if the following two conditions are satisfied:
        \begin{itemize}
            \item[(i)] (Liminf inequality) For every $x \in X$ and for every sequence $\{x_n\}_n \subset X$ converging to $x$, the following inequality holds:
            \[
                F(x) \leq \liminf_{n\to\infty} F_n(x_n).
            \]
            \item[(ii)] (Limsup inequality) For every $x \in X$, there exists a sequence $\{x_n\}_n \subset X$ such that $x_n \to x$ and
            \[
                \limsup_{n\to\infty} F_n(x_n) \leq F(x).
            \]
        \end{itemize}
    \end{definition}

    The concept of $\Gamma$-convergence was specifically developed to provide a variational characterization of the asymptotic behavior of minimization problems. It ensures the convergence of both global minimizers and minimum values (refer to \cite[Corollary 7.20]{Dalmasobook}).

    \begin{theorem}
        Let $(X, \mathrm{d})$ be a metric space. Consider a sequence of functionals $F_n: X \to \mathbb{R} \cup \{\infty\}$ that $\Gamma$-converges to a functional $F: X \to \mathbb{R} \cup \{\infty\}$.
        Suppose that for each $n \in \mathbb{N}$, $x_n \in X$ is a minimizer of $F_n$.
        Then, any cluster point $x \in X$ of the sequence $\{x_n\}_n$ is a minimizer of $F$, and satisfies
        \[
            F(x) = \limsup_{n\to\infty} F_n(x_n).
        \]
        Moreover, if the sequence $\{x_n\}_n$ converges to $x$, the limsup above is a full limit.
    \end{theorem}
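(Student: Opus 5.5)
This is the standard fact that minimizers converge to minimizers and minimum values converge. The plan is to combine the liminf inequality with the recovery sequences, and to use minimality of each $x_n$ as the bridge between them.

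\emph{Step 1: extract the subsequence.} Let $x$ be a cluster point of $\{x_n\}_n$, and fix a subsequence $\{x_{n_k}\}_k$ with $x_{n_k}\to x$. I will first check that $\Gamma$-convergence passes to subsequences. The liminf inequality is inherited directly: given $y_k\to x$, fill in the remaining indices with $x$ itself, which produces a full sequence converging to $x$, and then apply (i). For the limsup inequality, take the recovery sequence of the full family and simply restrict it to the indices $n_k$.

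\emph{Step 2: show that $x$ is a minimizer.} Fix an arbitrary $y\in X$ and let $\{y_n\}_n$ be a recovery sequence for $y$. Minimality of $x_{n_k}$ gives $F_{n_k}(x_{n_k})\le F_{n_k}(y_{n_k})$. Combining this with the liminf inequality along the subsequence, I obtain
\[
F(x)\le \liminf_k F_{n_k}(x_{n_k})\le \limsup_k F_{n_k}(x_{n_k})\le \limsup_k F_{n_k}(y_{n_k})\le F(y).
\]
Since $y$ was arbitrary, $x$ minimizes $F$.

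\emph{Step 3: identify the value.} Taking $y=x$ in the chain of Step 2 forces every inequality to be an equality along the subsequence, so $F(x)=\lim_k F_{n_k}(x_{n_k})$. To get the statement for the whole sequence, I will compare with $\min F$. Minimality of $x_n$ together with a recovery sequence for $x$ gives $\limsup_n F_n(x_n)\le \limsup_n F_n(\hat x_n)\le F(x)$, where $\{\hat x_n\}_n$ is that recovery sequence.

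\emph{Step 4: the reverse inequality and the convergent case.} For the inequality $\ge$, I use $\limsup_n\ge\lim_k F_{n_k}(x_{n_k})=F(x)$, which gives $F(x)=\limsup_n F_n(x_n)$. If the whole sequence converges to $x$, the liminf inequality applies to the full sequence. It yields $\liminf_n F_n(x_n)\ge F(x)$, so the limsup is in fact a full limit.

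\emph{Main obstacle.} The only delicate point is the index bookkeeping: the liminf inequality is stated for full sequences, so it must be transferred to subsequences by the padding trick of Step 1. Beyond that, the argument is routine.
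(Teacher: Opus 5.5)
Your proof is correct and complete. You transfer both $\Gamma$-inequalities to the subsequence, with padding for the liminf and restriction for the recovery sequence. You then squeeze using minimality of $x_n$ against a recovery sequence, and compare the full $\limsup$ with the subsequential limit. This is the standard argument.

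The paper does not prove this statement itself; it only cites Dal Maso's monograph (Corollary 7.20), and your argument is essentially the one behind that result.
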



\subsection{Young measures}
Young measures are an important tool in the theory of calculus of variations. Here, we recap the definitions and the compactness property that we use. For a more general introduction to Young measures, we refer to \cite{RindlerCalcVarBook}.
\begin{definition}
    Let $\Omega \subset \R^N$ open. We call a family of probability measures $\{\nu_x\}_{x \in \Omega} \subset \mathcal P(\R^M)$ a \emph{Young measure} if
    \begin{itemize}
        \item $(x \mapsto \nu_x)$ is weakly$^*$ measurable,
        \item we have 
        \[
            \int_\Omega  \int_{\R^M} |p| \, \nu_x(p) \, \mathrm d x < \infty.
        \]
    \end{itemize}
\end{definition}
Here, we are interested in Young measures generated by $L^1$ functions.
\begin{definition}
    Let $\Omega \subset \R^N$ open. We say that a Young measure $\{\nu_x\}_{x \in \Omega}$ is \emph{generated} by a sequence of $\{u_n \}_n \subset L^1(\Omega; \R^M)$ functions if $\{\delta_{u(x)}\}_{x \in \Omega}$ converges weakly$^*$ to $\{\nu_x\}_{x \in \Omega}$ in $L^\infty_{w^*}(\Omega; \M_b(\R^M))$.
\end{definition}
One of the features of Young measures are the compactness properties. More specifically, we will use the following result (cf.\ \cite[Lemma 4.3]{RindlerCalcVarBook}:
\begin{theorem}\label{thm:compactness_YM}
    Let $\Omega \subset \R^N$ open. Let $\{u_n\}$ be an bounded sequence in $L^1(\Omega; \R^M)$. Then, there exists a Young measure $\nu$ such that $u_n$ generates $\nu$ (after extracting a subsequence). Furthermore, if $\{f(\cdot , u_n(\cdot))\}_{n \in \N} \subset L^1(\Omega; \R)$ is a bounded and equiintegrable sequence in $L^1(\Omega; \R)$ for a Carath\'eodory function $f: \Omega \times \R^M \to \R$ then 
    \[
        \int_\Omega f(x, u_n(x)) \, \mathrm d x \to \int_\Omega \int_{\R^M} f(x, p) \, \mathrm d \nu_x(p) \, \mathrm d x.
    \]
\end{theorem}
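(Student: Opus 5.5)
The statement is the fundamental theorem of Young measures. I would derive it by duality and then pass from nice test integrands to general Carathéodory ones by truncation.

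\textbf{Step 1: existence of a limit.} Regard each $u_n$ as the map $x\mapsto\delta_{u_n(x)}$. This is an element of the unit ball of $L^\infty_{w^*}(\Omega;\M_b(\R^M))$, which is the dual of the separable Banach space $L^1(\Omega;C_0(\R^M))$. By the sequential Banach--Alaoglu theorem, a subsequence converges weakly$^*$ to some $\nu=\{\nu_x\}_{x}$. Testing against $\varphi(x)\psi(p)$ with $\varphi\ge0$ and $\psi\ge0$ shows that $\nu_x\ge0$ and $\nu_x(\R^M)\le1$ for a.e.\ $x$.

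\textbf{Step 2: the $\nu_x$ are probability measures.} Here I use the $L^1$ bound as a tightness estimate. Let $\phi_R\in C_c(\R^M)$ satisfy $0\le\phi_R\le1$ and $\phi_R=1$ on $B_R$. By Chebyshev's inequality,
\[
\int_\Omega\int_{\R^M}\phi_R\,\dd\nu_x\,\dd x=\lim_n\int_\Omega\phi_R(u_n)\,\dd x\ \ge\ |\Omega|-\frac{\sup_n\|u_n\|_{L^1}}{R}.
\]
Letting $R\to\infty$ gives $\int_\Omega\nu_x(\R^M)\,\dd x=|\Omega|$, so $\nu_x(\R^M)=1$ for a.e.\ $x$.

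\textbf{Step 3: finite first moment.} Test with $(|p|\wedge k)\phi_R(p)$ and pass to the limit in $n$. This bounds the double integral by $\sup_n\|u_n\|_{L^1}$. Monotone convergence in $R$ and then in $k$ gives $\int_\Omega\int|p|\,\dd\nu_x\,\dd x<\infty$. So $\nu$ is a Young measure in the sense of the definition.

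\textbf{Step 4: the convergence statement.} Let $f$ be Carathéodory with $\{f(\cdot,u_n)\}_n$ bounded and equiintegrable. For $k,R>0$ set
\[
f_{k,R}(x,p)\coloneqq\big((f(x,p)\wedge k)\vee(-k)\big)\,\phi_R(p).
\]
For a.e.\ $x$, $f_{k,R}(x,\cdot)$ is continuous with support in a fixed compact set. The map $x\mapsto f_{k,R}(x,\cdot)\in C_0(\R^M)$ is weakly measurable, and $C_0$ is separable, so by Pettis' theorem it is strongly measurable; alternatively one can invoke Scorza-Dragoni. It is also bounded and $\Omega$ is bounded, so $f_{k,R}\in L^1(\Omega;C_0(\R^M))$. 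Hence the weak$^*$ convergence gives the claim for each $f_{k,R}$.

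It remains to control the two truncation errors uniformly in $n$:
\begin{itemize}
\item The cutoff error is at most $k\,|\{|u_n|>R\}|\le k\sup_n\|u_n\|_{L^1}/R$.
\item The error on the region where $|f|>k$ is at most $\sup_n\int_{\{|f(\cdot,u_n)|>k\}}|f(\cdot,u_n)|\,\dd x$, which tends to $0$ as $k\to\infty$ by equiintegrability.
\end{itemize}
On the limit side, the analogous errors are handled by the tightness from Step 2 and by monotone convergence. For this one first needs $\int_\Omega\int|f|\,\dd\nu_x\,\dd x\le\sup_n\|f(\cdot,u_n)\|_{L^1}<\infty$, which follows by applying Steps 1--3 to the nonnegative truncations $(|f|\wedge k)\phi_R$ and letting $k,R\to\infty$. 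The argument concludes by sending $n\to\infty$ first, then $R\to\infty$, then $k\to\infty$.

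\textbf{Main obstacle.} I expect the main difficulty to be Step 4. One must verify that the truncated Carathéodory integrands are admissible test functions in $L^1(\Omega;C_0)$, and choose the order of limits so that the error on the region where $|f|>k$ vanishes uniformly in $n$. That uniformity is exactly what equiintegrability provides; without it, only a lower semicontinuity inequality would hold.
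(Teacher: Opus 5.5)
Your proof is correct and follows the standard route to the fundamental theorem of Young measures, which the paper does not reprove but cites from Rindler's book: the duality $(L^1(\Omega;C_0(\R^M)))^*=L^\infty_{w^*}(\Omega;\mathcal{M}_b(\R^M))$ with sequential Banach--Alaoglu, tightness from the $L^1$ bound, and truncation of Carath\'eodory integrands controlled by equiintegrability. The one caveat is that you use $|\Omega|<\infty$ (when testing with $\phi_R$ and in Step 4); this holds in the paper since $\Omega$ is bounded, and it cannot simply be dropped, because on $\Omega=\R$ the sequence $u_n=\ca_{[n,n+1]}$ generates $\nu_x=\delta_0$ while $f(x,p)=\min\{|p|,1\}$ gives $\int f(x,u_n)\dd x=1\neq 0$, so for unbounded $\Omega$ equiintegrability would have to include tightness in $x$.
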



\section{Technical results}

In this section we collect the technical results needed to prove the main result of the manuscript. We decided to present them in here because they are also of a separate interest for the reader.


\subsection{Properties of the geodesic distance}

In this subsection, we discuss properties of the geodesic distance. More precisely, given a non-negative Borel function $W_0: \R^M \to [0, \infty)$ we associate to $W_0$ a geodesic distance via 
\[
    \dd_{W_0}(p,q) := \inf\left\{ \int_{-1}^1 W_0(\varphi)|\varphi'| \, \mathrm{d}s :\, \varphi \in W^{1,1}\left([-1,1]; \R^M \right) \right\},
\]
for all $p,q \in \R^M$.
The typical choice of $W_0$ in the case of phase transitions is $W_0 = 2\sqrt W$ where $W$ is the double-well potential of the Cahn--Hilliard functional. If $W_0$ is such that $W_0 \geq \delta >0$, lower semi-continuous and coercive, then it is easy to see that the infimum is in fact a minimum and that the corresponding geodesics have bounded path length. If we allow $W_0$ to be $0$ at certain points this does not hold in general. Even if there exist geodesics, their path length does not necessarily have to be bounded (see Example \ref{ex:geodesics_infinite_Euclidean_length}). The main result of this subsection provides a general condition for this to hold true. The setting we assume throughout the section is the existence of a continuous function $f\colon [0,\infty) \to [0, \infty)$ with $f(0) = 0$ and $f > 0$ on $(0,\infty)$ such that certain growth conditions hold true. In what follows, $a,b \in \R^M$ are always fixed single pointed wells of our potential, i.e., we assume 
$$
    \{ W_0 = 0\} = \{a,b\}.
$$
We introduce a few different growth conditions:   

\begin{enumerate}[label=(G\arabic*), ref=(G\arabic*)]
    \item\label{G1} \emph{Controlled upper growth}: There exists a constant $C_G > 0$ such that 
    \[
        W_0(u) \leq C_Gf(\min \{|u - a|, |u - b|\}),  
    \]
    for all $u \in \R^M$.
    \item\label{G2} \emph{Summable growth around the wells}: There exist $\alpha, \, R, \, C_G > 0$ such that $f$ fulfils 
    \begin{enumerate}
        \item $f$ is non-decreasing in $[0,R]$;
        \item $f(2t) \leq C_Gf(t)$ for all $t \in [0, \infty)$;
        \item for all $u \in B_R(a)\cup B_R(b)$ it holds that
        \[
            \frac{1}{C_G}f(\min \{|u - a|, |u - b|\}) \leq W_0(u) \leq C_Gf(\min \{|u - a|, |u - b|\}).
        \]
    \end{enumerate}
    \item\label{G3} \emph{Non-summable growth at $\infty$}: We have 
    \[
        \int_0^\infty f(t) \, dt = \infty,
    \]
    and there exists a constant $C_G > 0$ such that 
    \[
        \frac{1}{C_G}f(|u|) \leq W_0(u),
    \]
    for all $u \in \R^M$.
    \item\label{G4} \emph{Strong non-summable growth at $\infty$}: There exist $\alpha, R > 0$ such that 
    \[
        f(t) \geq \alpha
    \]
    for $t \geq R$,
    and there exists a constant $C_G > 0$ such that 
    \[
        \frac{1}{C_G}f(|u|) - C_G \leq W_0(u),
    \]
    for all $u \in \R^M$.
\end{enumerate}
To simplify the notation in the proofs, we assume 
$$ f(1) < 1/2,\qquad \alpha = R = 1,\qquad  |a - b| = 2.$$
We start the discussion with the following elementary lemma.

\begin{lemma}\label{arc-length-estimate}
    Let $p, q \in \R^2$, with polar coordinates $p = r_pe^{i\psi_p}, q = r_qe^{i\psi_q}$ with $\psi_p, \psi_q \in [0,2\pi)$ and $p_q, r_q \geq 0$. Consider $r(s) := sr_q + (1-s)r_p$ and $\psi(s) := s\psi_q + (1-s)\psi_p$ and set
    \[
        \gamma(s) := r(s)e^{i\psi(s)}.
    \]
    Then,
    \[
        \int_0^1|\gamma'|\, \mathrm{d}s \leq |r_p - r_q| + 2\pi\max\{r_p, r_q\}.
    \]
\end{lemma}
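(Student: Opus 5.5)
The plan is to compute the speed of $\gamma$ directly in polar form and then integrate using the triangle inequality. Since $\gamma(s)=r(s)e^{i\psi(s)}$ with $r$ and $\psi$ affine in $s$, the product rule gives
\[
\gamma'(s) = r'(s)e^{i\psi(s)} + i\,r(s)\psi'(s)e^{i\psi(s)},
\]
where $r'(s)=r_q-r_p$ and $\psi'(s)=\psi_q-\psi_p$ are constant. The two summands are orthogonal, so
\[
|\gamma'(s)| = \sqrt{(r_q-r_p)^2 + r(s)^2(\psi_q-\psi_p)^2}.
\]
For the estimate it is enough to use the cruder bound $|\gamma'(s)|\le |r_q-r_p| + r(s)|\psi_q-\psi_p|$, which follows from $\sqrt{x^2+y^2}\le |x|+|y|$.

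Next I integrate over $[0,1]$. The first term contributes exactly $|r_p-r_q|$. For the second term, $r(s)$ is a convex combination of $r_p$ and $r_q$, so $r(s)\le\max\{r_p,r_q\}$. Moreover $\psi_p,\psi_q\in[0,2\pi)$ gives $|\psi_q-\psi_p|<2\pi$. Hence the second term is at most $2\pi\max\{r_p,r_q\}$, and adding the two contributions gives the claimed inequality.

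The only care needed is at the origin: if $r_p=0$ or $r_q=0$, the angle $\psi$ at that endpoint is arbitrary, and $r(s)$ may vanish at an endpoint. The formula for $\gamma'$ is still valid because $\gamma$ is a smooth composition of smooth maps in $s$, so no case distinction is needed. There is no real obstacle here, since the statement is elementary.
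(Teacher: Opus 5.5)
Your proposal is correct and follows the paper's proof: you bound $|\gamma'(s)|\le |r_p-r_q| + r(s)|\psi_p-\psi_q|$ and integrate over $[0,1]$. You also spell out the steps the paper leaves implicit, namely the orthogonal decomposition of $\gamma'$, the bound $r(s)\le\max\{r_p,r_q\}$, and $|\psi_p-\psi_q|<2\pi$.
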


\begin{proof}
    Let us consider the situation graphically.
    \begin{figure}[H]
        \includegraphics[width=0.25\linewidth]{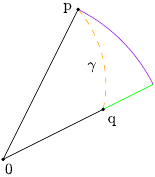}
    \end{figure}
    \noindent We directly compute:
    \[
        \int_0^1 | \gamma ' | \, \mathrm d s \leq \int_0^1 |r_p - r_q| + | r(s) (\psi_p - \psi_q) | \, \mathrm d s.
    \]
    From which we get the required bound.
\end{proof}

In \cite{ZunSter} it has been proved that if $W_0$ is continuous, then the minimization problem defining $\dd_{W_0} (p,q)$ admits a solution for all $p,q \in \R^M$.
Since minimizers might not exist if $W_0$ is only Borel, we use the concept of $\eps$-minimizers.

\begin{definition}
    Let $\varepsilon > 0$. A curve $\varphi \in W^{1,1}([-1,1]; \R^M)$ is called an \emph{$\eps$-minimizer} with respect to the geodesic distance $\dd_W(\varphi(-1), \varphi(1))$ if 
    \begin{align}\label{def:eps-minimizer}
        \int_{-1}^1 W_0(\varphi)|\varphi'|\,\mathrm{d}s \leq \dd_{W_0}(\varphi(-1), \varphi(1)) + \varepsilon.
    \end{align}
    Since the left-hand side is invariant under reparameterization we will also call any curve $\psi \in W^{1,1}([c,d]; \R^M)$ an $\eps$-minimizer if there exists a reparameterization to a curve $\varphi \in W^{1,1}([-1,1]; \R^M)$ such that \eqref{def:eps-minimizer} holds.
\end{definition}

Before stating the main result of this subsection, we want to recap some well-known properties of the geodesic distance and its $\eps$-minimizers. We give some proofs for convenience of the reader.

\begin{lemma}\label{lemma:almost_minimizers_have_locality_prop}
    Let $W_0: \R^M \to [0,\infty)$ be a locally bounded Borel function, and let $\eps > 0$. Suppose that $\varphi \in W^{1,1}([-1,1], \R^M)$ is such that
    \[
        \int_{-1}^1 W_0(\varphi)|\varphi'|\,\mathrm{d}s \leq d_{W_0}(\varphi(-1), \varphi(1)) + \eps.
    \]
    Then,  
    \[
        \int_{t_1}^{t_2} W_0(\varphi)|\varphi'|\,\mathrm{d}s \leq d_{W_0}(\varphi(t_1), \varphi(t_2)) + \eps
    \]
    for all $t_1 \leq t_2 \in [-1,1]$. 
\end{lemma}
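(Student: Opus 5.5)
The argument is a standard cut-and-paste by contradiction. Suppose there exist $t_1\le t_2$ in $[-1,1]$ such that
\[
\int_{t_1}^{t_2} W_0(\varphi)|\varphi'|\,\mathrm{d}s > \dd_{W_0}(\varphi(t_1),\varphi(t_2)) + \eps .
\]
Then there is $\eta>0$ for which the left-hand side still exceeds $\dd_{W_0}(\varphi(t_1),\varphi(t_2))+\eps+\eta$. By the definition of $\dd_{W_0}$ as an infimum, we can pick a competitor $\psi\in W^{1,1}([-1,1];\R^M)$ with $\psi(-1)=\varphi(t_1)$, $\psi(1)=\varphi(t_2)$ and
\[
\int_{-1}^1 W_0(\psi)|\psi'|\,\mathrm{d}s\le \dd_{W_0}(\varphi(t_1),\varphi(t_2))+\eta .
\]
The finiteness of $\dd_{W_0}(\varphi(t_1),\varphi(t_2))$ is guaranteed by local boundedness of $W_0$: the segment joining the two points is admissible and has finite cost. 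This is the only place the hypothesis is used.

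Next, build the concatenated curve $\tilde\varphi$. It agrees with $\varphi$ on $[-1,t_1]\cup[t_2,1]$, and on $[t_1,t_2]$ it is $\psi$ reparametrized affinely onto $[t_1,t_2]$. The degenerate case $t_1=t_2$ is trivial, since then the left-hand side is $0$. The endpoints match, so $\tilde\varphi$ is continuous and piecewise $W^{1,1}$, hence in $W^{1,1}([-1,1];\R^M)$. It also has the same endpoints $\varphi(\pm1)$. Since the weighted length $\int W_0(\gamma)|\gamma'|$ is invariant under affine (indeed monotone Lipschitz) reparametrization, as the paper notes after the definition of $\eps$-minimizer, we get
\[
\int_{-1}^1 W_0(\tilde\varphi)|\tilde\varphi'| = \int_{-1}^1 W_0(\varphi)|\varphi'| - \int_{t_1}^{t_2}W_0(\varphi)|\varphi'| + \int_{-1}^1 W_0(\psi)|\psi'| .
\]

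Finally, combine the estimates. The right-hand side is strictly less than
\[
\dd_{W_0}(\varphi(-1),\varphi(1))+\eps-(\dd_{W_0}(\varphi(t_1),\varphi(t_2))+\eps+\eta)+\dd_{W_0}(\varphi(t_1),\varphi(t_2))+\eta = \dd_{W_0}(\varphi(-1),\varphi(1)).
\]
This contradicts the definition of $\dd_{W_0}(\varphi(-1),\varphi(1))$ as an infimum over admissible curves.

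There is no real obstacle here. The only points needing care are the bookkeeping of the strict inequality via the auxiliary $\eta$, and checking that the concatenated curve is admissible in $W^{1,1}$ with the cost additive over subintervals.
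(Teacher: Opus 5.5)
Your proof is correct and is essentially the paper's argument. The paper also argues by contradiction and splits $\int_{-1}^1$ at $t_1,t_2$. It bounds the outer pieces from below by $\dd_{W_0}(\varphi(-1),\varphi(t_1))$ and $\dd_{W_0}(\varphi(t_2),\varphi(1))$ and then invokes the triangle inequality for $\dd_{W_0}$, taking that inequality for granted; you make it explicit by concatenating a near-optimal competitor for the middle piece, which makes your version slightly more self-contained without changing the underlying idea.
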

\begin{proof}
    Suppose that there exist $t_1, t_2 \in [-1,1]$ with $t_1 \leq t_2$ such that
    \[
        \int_{t_1}^{t_2} W_0(\varphi)|\varphi'|\,\mathrm{d}s > \dd_{W_0}(\varphi(t_1), \varphi(t_2)) + \eps.
    \]
    This directly implies 
    \begin{align*}
        \int_{-1}^1 W_0(\varphi)|\varphi'|\,\mathrm{d}s &= \int_{-1}^{t_1} W_0(\varphi)|\varphi'|\,\mathrm{d}s + \int_{t_1}^{t_2} W_0(\varphi)|\varphi'|\,\mathrm{d}s + \int_{t_2}^{1} W_0(\varphi)|\varphi'|\,\mathrm{d}s \\
        & > \dd_{W_0}(\varphi(-1), \varphi(t_1))+ (\dd_{W_0}(\varphi(t_1), \varphi(t_2)) + \eps) + \dd_{W_0}(\varphi(t_2), \varphi(1)) \\
        & \geq \dd_{W_0}(\varphi(-1), \varphi(1)) + \eps.
    \end{align*}
    This would contradict that $\varphi$ was an $\eps$-minimizer with respect to $\dd_W(\varphi(-1), \varphi(1))$.
\end{proof}

The next lemma states that even if the function $W_0$ is not continuous, the corresponding geodesic distance is still Lipschitz continuous.

\begin{lemma}\label{lemma:geodesic_distance_is_locally_lipschitz}
    Suppose that $W_0: \R^M \to [0, \infty)$ is a locally bounded Borel function. Then, $\dd_{W_0}$ is locally Lipschitz. \\
    Moreover, for any bounded convex set $K \subset \R^M$ the Lipschitz constant $L_K$ is bounded by $\|W_0\|_{L^\infty(K)}$. \\
    In particular, if \ref{G1} holds there exists a constant $C > 0$ (only dependent on $|a|$, $|b|$, $f$ and $C_G$) such that for any ball $B_r(0)$ we have 
    \[
        L_{\overline{B_r(0)}} \leq C(1 + f(r)).
    \]
\end{lemma}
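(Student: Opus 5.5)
The plan is to compare $\dd_{W_0}(p,q)$ with $\dd_{W_0}(p',q')$ by concatenating curves. The triangle inequality for $\dd_{W_0}$ follows directly from its definition: concatenate two admissible curves and reparametrize onto $[-1,1]$, which leaves the weighted length unchanged. It therefore suffices to show that for $p,p'$ in a bounded convex set $K$,
\[
\dd_{W_0}(p,p') \leq \|W_0\|_{L^\infty(K)}\,|p-p'|.
\]
Once this holds, the triangle inequality gives
\[
|\dd_{W_0}(p,q)-\dd_{W_0}(p',q')| \leq \dd_{W_0}(p,p')+\dd_{W_0}(q,q') \leq \|W_0\|_{L^\infty(K)}\bigl(|p-p'|+|q-q'|\bigr)
\]
for all $p,q,p',q'\in K$. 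This is the claimed Lipschitz bound on $K$. Local Lipschitzianity on all of $\R^M$ then follows by taking $K$ to be a ball around any point; that ball is bounded and convex, and $W_0$ is bounded on it because $W_0$ is locally bounded.

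To prove the key estimate I will use the segment $\varphi(s)=p+\frac{s+1}{2}(p'-p)$, which stays in $K$ because $K$ is convex. The bound is then
\[
\int_{-1}^1 W_0(\varphi)|\varphi'|\,\dd s \le \sup_K W_0\cdot|p-p'|.
\]
The one point needing care is that $W_0$ is only Borel, so the value along a segment (a null set) is not controlled by an $L^\infty$ norm. I will therefore read $\|W_0\|_{L^\infty(K)}$ as the supremum of $W_0$ over $K$, which is finite by local boundedness. This is harmless, and I expect it to be the main subtlety to flag. If one insists on the essential supremum, the fix is to integrate over parallel translates of the segment. For almost every translate the integral is bounded by the essential supremum times the length, so one can pick such a translate and join its endpoints to $p$ and $p'$ with tiny segments whose cost is at most $\sup W_0$ times their length, which tends to zero.

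For the last claim, assume \ref{G1} and take $K=\overline{B_r(0)}$. For $u\in K$ we have $\min\{|u-a|,|u-b|\}\le |u|+|a|\le r+|a|$. If $f$ is non-decreasing, this gives $W_0(u)\le C_G f(r+|a|)$. To reach the form $C(1+f(r))$ I will split into two cases. If $r\le |a|$, then $W_0\le C_G\max_{[0,2|a|]}f$, which is a constant depending only on $f$, $|a|$ and $C_G$. If $r>|a|$, then $r+|a|\le 2r$, and the doubling condition (as in \ref{G2}(b)/\ref{H8}) gives $f(2r)\le C_G f(r)$. Without monotonicity or doubling the last step fails, so I will use these properties of $f$ from the standing hypotheses as stated in \ref{H4}, \ref{H8}. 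Combining the two cases gives $L_{\overline{B_r(0)}}\le C(1+f(r))$.
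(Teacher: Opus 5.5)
Your proof is correct and is the standard argument: bound $\dd_{W_0}(p,p')$ by the cost of the straight segment, then use the triangle inequality. The paper gives no proof of its own here and only refers to \cite[Lemma 2.7]{DeutschSolidSolid}. You are also right on two points: for a merely Borel $W_0$ the quantity $\|W_0\|_{L^\infty(K)}$ has to be read as a pointwise supremum (or your translate argument has to be run on an open neighbourhood), and the bound $C(1+f(r))$ needs $f$ to be non-decreasing and doubling (as in \ref{H4} and \ref{H8}), which \ref{G1} alone does not supply.
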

The proof can be found, for instance, in \cite[Lemma 2.7]{DeutschSolidSolid}. It well-known that $\eps$-minimizers with respect to the geodesic distance are uniformly bounded if the potential is coercive. Here, we show that, in fact, the weaker condition of a non-summable growth condition at $\infty$ suffices.
\begin{lemma}\label{lemma:eps_minimizers_are_bounded}
    Suppose that $W_0: \R^M \to [0, \infty)$ is a locally bounded Borel function, and $\ref{G3}$ holds. Moreover, let $\eps \in (0,1)$ and $r > 0$. Then, there exists a constant $K > 0$ (only dependent on $|a|$, $|b|$, $r$, $f$ and $C_G$) such that for any $p, q \in B_r(0)$ and any $\eps$-minimizer $\varphi$ with respect to $\dd_{W_0}(p,q)$ we have 
    \[
        \| \varphi \|_{L^\infty([-1,1];\R^M)} \leq K. 
    \]
\end{lemma}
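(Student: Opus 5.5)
We compare the cost of an $\eps$-minimizer $\varphi$ with an explicit competitor. The competitor gives an upper bound on $\dd_{W_0}(p,q)$ depending only on $r$, $|a|$, $|b|$, $f$ and $C_G$. In the other direction, \ref{G3} forces any curve reaching far away to pay a cost that diverges with the distance reached.

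\emph{Upper bound.} Take the straight segment from $p$ to $q$, which lies in $B_r(0)$. By Lemma \ref{lemma:geodesic_distance_is_locally_lipschitz} (or directly, since $W_0$ is locally bounded), we get
\[
\dd_{W_0}(p,q)\le \|W_0\|_{L^\infty(B_r(0))}\,2r=:D_r .
\]
If only \ref{G3} is assumed, the constant $\|W_0\|_{L^\infty(B_r(0))}$ is not controlled by the listed quantities alone. In that case I would either allow $K$ to depend on this local bound, or, in the setting where \ref{G1} is also available, use Lemma \ref{lemma:geodesic_distance_is_locally_lipschitz} to bound it by $C(1+f(r))$. Either way, $\dd_{W_0}(p,q)\le D_r$ with $D_r$ independent of $p,q$ and $\varphi$. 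Since $\eps<1$, this gives
\[
\int_{-1}^1 W_0(\varphi)|\varphi'|\,\mathrm ds\le D_r+1 .
\]

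\emph{Lower bound for excursions.} Suppose $\|\varphi\|_\infty> R'$ for some $R'>r$. Since $\varphi$ is continuous and $|\varphi(-1)|<r$, there is a first time $t_0$ with $|\varphi(t_0)|=R'$, and a last time $t_1<t_0$ with $|\varphi(t_1)|=r$. On $[t_1,t_0]$ the function $\rho(s)=|\varphi(s)|$ is absolutely continuous, runs from $r$ to $R'$, and satisfies $|\rho'|\le|\varphi'|$. Using the lower bound in \ref{G3} together with the one-dimensional change of variables $t=\rho(s)$ (area formula), we obtain
\[
\int_{t_1}^{t_0}W_0(\varphi)|\varphi'|\,\mathrm ds\ \ge\ \frac1{C_G}\int_{t_1}^{t_0} f(\rho)\,|\rho'|\,\mathrm ds\ \ge\ \frac1{C_G}\int_r^{R'} f(t)\,\mathrm dt .
\]
Because $\int_0^\infty f=\infty$ and $f$ is locally bounded, we can choose $K>r$, depending only on $r$, $f$, $C_G$ and $D_r$, such that $\frac1{C_G}\int_r^{K}f(t)\,\mathrm dt> D_r+1$. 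Taking $R'=K$ then contradicts the upper bound, so $\|\varphi\|_\infty\le K$.

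The only delicate points are the change of variables in the lower bound and the dependence of $D_r$. The change of variables is justified since $\rho$ is absolutely continuous and $f$ is continuous and nonnegative, so $\int f(\rho)|\rho'|\ge\int_{\rho([t_1,t_0])}f$. For $D_r$, the main care needed is to make the constant depend only on the stated quantities, which is straightforward once \ref{G1} or local boundedness is used as above. I expect no real obstacle beyond bookkeeping of these constants.
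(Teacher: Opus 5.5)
Your proof is correct and follows essentially the same route as the paper. Both arguments bound the cost of $\varphi$ from above by $\max_{p,q\in B_r(0)}\dd_{W_0}(p,q)+1$, and from below, for any excursion beyond radius $K$, by integrating the \ref{G3} lower bound along $|\varphi|$. The only difference is that you use the exact one-dimensional change of variables, giving $\frac{1}{C_G}\int_r^K f(t)\,\mathrm{d}t$, whereas the paper uses a partition $r\le t_0<\dots<t_n$ with lower Riemann sums. Your caveat also applies to the paper: under \ref{G3} alone, $\max_{p,q\in B_r(0)}\dd_{W_0}(p,q)$ depends on $\sup_{B_r(0)}W_0$, so the stated dependence of $K$ only on $|a|$, $|b|$, $r$, $f$, $C_G$ requires \ref{G1}.
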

\begin{proof}
    By \ref{G3} there exist a partition $r \leq t_0 < ... < t_n$ such that 
    \begin{align}\label{eq:correct_partition_choice}
        \sum_{k = 1}^n \min_{t \in [t_{i - 1}, t_i]}f(t) (t_i - t_{i - 1}) > 2C_G\left(\max_{p,q \in B_r(0)}\{\dd_{W_0}(p,q)\} + 1 \right)
    \end{align}
    We argue by contradiction: Suppose that there exist $\tilde p, \tilde q \in B_r(0)$ such that $$\| \varphi \|_{L^\infty([-1,1];\,  \R^N)} > t_n$$ for an $\eps$-minimizer $\varphi$ of $\dd_{W_0}(\tilde p, \tilde q)$. Then, there exist $s_0 < ... < s_n$ such that $f(s_i) = t_i$ for $i = 0,.., n$. In particular, we have
    \begin{align*}
        \int_{-1}^1 W_0(\varphi)|\varphi'|\,\mathrm{d}s &\geq \sum_{k = 1}^n \int_{s_{i-1}}^{s_i} W_0(\varphi)|\varphi'|\,\mathrm{d}s \\ 
        &\geq \frac{1}{C_G}\sum_{k = 1}^n \min_{t \in [t_{i - 1}, t_i]}f(t) (t_i - t_{i - 1}) \\
        &\geq 2\left(\max_{p,q \in B_r(0)}\{\dd_{W_0}(\tilde p, \tilde q)\} + 1 \right) \\
        &> \dd_{W_0}(\tilde p, \tilde q) + \eps.
    \end{align*}
    This contradicts $\varphi$ being an $\eps$-minimizer.
\end{proof}

\begin{remark}\label{remark:geodesic_parameter_discussion_1}
    The constant in Lemma \ref{lemma:geodesic_distance_is_locally_lipschitz} can be made explicit in the following sense. Since $f$ is positive in $(0,\infty)$, the function 
    $$F(t) = \int_r^T f(t) \, \mathrm \dd t$$
    is non-decreasing and admits a continuous, non-decreasing left inverse $G\colon(0,\infty) \to (r, \infty)$. In particular, we have 
    \[
        F(K) = 3\left(\max_{p,q \in B_r(0)}\{\dd_{W_0}(\tilde p, \tilde q)\} + 1 \right),
    \]
    which we can express via the left inverse as
    \begin{align}\label{eq:expression_of_constant}
        K = G\left(3\left(\max_{p,q \in B_r(0)}\{\dd_{W_0}(\tilde p, \tilde q)\} + 1 \right)\right).
    \end{align}
    This has the following consequence: if \ref{G1} and \ref{G4} hold, repeating the proof of Lemma \ref{lemma:eps_minimizers_are_bounded} with $\tilde f = \chi_{[R, \infty)}\alpha$ we obtain that the resulting $K$ is then just an affine combination of $R$ (from \ref{G4}) and $f(r)$. Indeed, this can also be derived by choosing the partition $t_0 = R$ and $t_1 = K := R + C(1 + f(r))$ in \eqref{eq:correct_partition_choice} for a suitable large constant $C > 0$. In particular, we have
    \[
        K \leq C(1 + R + f(r)) 
    \]
    where $C$ is only dependent on $|a|$, $|b|$, $C_G$ and $\alpha$. 
\end{remark}

\begin{remark}\label{remark:geodesic_parameter_discussion_2}
    A consequence of Remark \ref{remark:geodesic_parameter_discussion_1} is the following. Let $f, g \in C([0,\infty), [0,\infty))$ with $f \leq g$ both satisfying $\ref{G3}$, in the sense that they fulfill the non-summability condition at $\infty$. Then, Lemma \ref{lemma:eps_minimizers_are_bounded} now gives constants $K_f$ and $K_g$. By \eqref{eq:expression_of_constant}, we can directly see that we can choose the constants such that $K_f \leq K_g$ holds.
\end{remark}

Next, we will observe that there exists a minimising sequence of curves $\varphi_m$ for the geodesic distance which has bounded path-length.

\begin{theorem}\label{existence-of-geodesics-with-bounded-path-length}
    Suppose that $W_0$ fulfils \ref{G1}, \ref{G2} and \ref{G3}. Let $r > 0$ and let $p, q \in B_r(0)$. Then, there exists a sequence $\{ \varphi_m \}_m \subset W^{1,1}([-1,1]; \R^M)$ with $\varphi(-1) = p$, $\varphi(1) = q$ and 
    \[
        \int_{-1}^1 W_0(\varphi_m)|\varphi_m'| \,\mathrm{d}s \leq \dd_{W_0}(p, q) + C\frac{1}{2^m}
    \]
    such that
    \begin{equation}\label{eq:geodesic_bounded_Euclidean}
        \int_{-1}^1 |\varphi_m'| \,\mathrm{d}s \leq C(1 + f(|p|)|p| + f(|q|)|q|),
    \end{equation}
    with a constant $C >0 $ only depending on $|a|$, $|b|$, $r$ and $C_G$. If, in addition, \ref{G4} holds, then $C$ is only dependent on $|a|$, $|b|$, $\alpha$, $R$ and $C_G$, but independent of $r$.
    
    As a consequence, the infimization problem defining $\dd_{W_0}(p, q)$ admits a solution if $W_0$ is lower-semicontinuous.
\end{theorem}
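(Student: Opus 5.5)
Starting from an arbitrary $2^{-m}$-minimizer $\psi$ for $\dd_{W_0}(p,q)$, I will modify it, without increasing the energy by more than a constant times $2^{-m}$, until its Euclidean length is controlled. The curve is split into three regimes: the part far from the wells and from infinity, the part near infinity, and the parts inside $B_1(a)\cup B_1(b)$.

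\textbf{Step 1 (far from infinity).} By Lemma \ref{lemma:eps_minimizers_are_bounded}, under \ref{G3} the curve $\psi$ stays in a ball $B_K(0)$. Here $K$ depends only on $|a|,|b|,r,f,C_G$, and under \ref{G4} it is affine in $R$ and $f(r)$ by Remark \ref{remark:geodesic_parameter_discussion_1}.

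\textbf{Step 2 (away from the wells).} On the compact region $\overline{B_K(0)}\setminus (B_{1/2}(a)\cup B_{1/2}(b))$ I want a lower bound $W_0\ge c>0$. This is not automatic for a Borel $W_0$; it follows from \ref{G2}(c) only near the wells. So I will replace the portion of $\psi$ outside the small balls by a competitor. Between the last exit time from a neighbourhood of $a$ and the first entry into a neighbourhood of $b$, I substitute a polygonal or straight path. Its energy is at most the local Lipschitz constant $L_{\overline{B_K}}\le C(1+f(K))$ from Lemma \ref{lemma:geodesic_distance_is_locally_lipschitz} times its length. The point is that the length budget is bounded regardless of the pieces chosen. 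More robustly: any sub-arc of an $\varepsilon$-minimizer is an $\varepsilon$-minimizer by Lemma \ref{lemma:almost_minimizers_have_locality_prop}. Hence I may replace each sub-arc connecting two points $x,y$ by a straight segment whenever that does not increase the energy by more than a controlled amount. The remaining difficulty is the portion near the wells.

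\textbf{Step 3 (near the wells) — the main obstacle.} Near $a$ (say $|u-a|<1$), use dyadic annuli $A_k=\{2^{-k-1}\le |u-a|\le 2^{-k}\}$. By \ref{G2}(a),(c), on $A_k$ we have $W_0\ge C_G^{-1}f(2^{-k-1})$ and $W_0\le C_Gf(2^{-k})\le C_G^2 f(2^{-k-1})$ using \ref{G2}(b). So $W_0$ is comparable to the constant $f_k:=f(2^{-k})$ on $A_k$, with constants independent of $k$.

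Take the last time $\psi$ enters the shell around radius $2^{-k}$ and the last time it is at radius $2^{-k-1}$. I replace the sub-arc between these times by the curve of Lemma \ref{arc-length-estimate}, interpolating radius and angle in a $2$-plane through $a$. This curve stays in $A_k$ and has length at most $C2^{-k}$, hence energy at most $C f_k 2^{-k}$. Any arc joining these two spheres has energy at least $C^{-1} f_k\,2^{-k-1}$, because the radial distance traversed inside $A_k$ is $2^{-k-1}$. So the replacement costs only a bounded multiplicative factor. It is not an additive $2^{-m}$ error, which is the subtle point.

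To obtain an $\varepsilon$-minimizer I instead use the competitor only to bound the length of the original curve. Suppose the length of $\psi$ inside $A_k$ exceeds $C' 2^{-k}$ for a large constant $C'$. Then its energy there exceeds $C' C^{-1} f_k 2^{-k}$, which beats the replacement cost $Cf_k2^{-k}$ by a margin comparable to $f_k 2^{-k}$. Performing the replacement would then decrease the energy strictly. So after iteratively replacing, the minimizing property is only improved, and each annulus carries length $\le C2^{-k}$. One technical check is that the curve may re-enter annuli, which is handled by taking first-entry and last-exit times and cutting loops. Summing over $k$ gives length $\le C$ near each well.

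\textbf{Step 4 (assembling).} Away from the wells, the modified curve inside $B_K$ with $W_0\ge \min f$ on $[1/2,\dots]$ (by \ref{G3} only at infinity, hence I use \ref{G2} plus compactness via $f>0$) has length bounded by $C\,\dd_{W_0}(p,q)/\inf W_0$. I am least sure of this step for Borel $W_0$, and would handle it by replacing with straight segments as in Step 2. The end pieces near $p$ and $q$ contribute the terms $f(|p|)|p|$ and $f(|q|)|q|$ from the Lipschitz bound. Under \ref{G4}, $K$ is independent of $r$, giving the uniform constant.

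Finally, if $W_0$ is lower semicontinuous, reparametrize the curves $\varphi_m$ by constant speed. They are then equi-Lipschitz and converge uniformly by Arzelà–Ascoli, and lower semicontinuity of $\int W_0(\varphi)|\varphi'|$ (Ioffe-type, $W_0$ lsc) yields a minimizer.
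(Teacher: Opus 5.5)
Your near-well argument is sound, and it takes a different route from the paper. The paper never modifies its $2^{-2m}$-minimizer on the dyadic annuli $\T^{1,k}$, $k<m$. It only replaces the part inside $B_{\delta 2^{-m}}(a)$ by a straight segment, at additive cost $\lesssim f(\delta 2^{-m})2^{-m}$. It then bounds the length on each annulus by comparing, through Lemma \ref{lemma:almost_minimizers_have_locality_prop}, with the arc of Lemma \ref{arc-length-estimate}. That comparison divides the additive error by $f(\delta 2^{-(m-k)})$, so the error has to be small relative to $f(\delta 2^{-m})2^{-m}$. Your surgery replaces a sub-arc only if the arc is cheaper, so it never raises the energy and needs neither this bookkeeping nor a cut-off scale. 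To make it precise:
\begin{itemize}
\item work on the time intervals between consecutive first hitting times of $\partial B_{2^{-k}}(a)$, and symmetrically between last exit times, not on ``the length of $\psi$ inside $A_k$'';
\item on these intervals the curve stays at distance $\ge 2^{-k-1}$ from $a$, but it may leave $A_k$;
\item cut the loop between the first and last visit to $a$, if there is one.
\end{itemize}

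The genuine gap is in the region away from the wells, and your Step 2 would fail there. Suppose you replace the portion between leaving a neighbourhood of $a$ and reaching one of $b$ by a straight or polygonal path. The new piece's energy is only bounded by $\|W_0\|_{L^\infty(B_K)}$ times its length. That can exceed the energy of the removed sub-arc by an amount of order one, so the output is no longer a $C2^{-m}$-minimizer. Your alternative, replacing a sub-arc by a segment whenever this costs little, has no mechanism that forces long sub-arcs to be replaceable.

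What closes the argument, and what the paper uses in its Step 3, is a positive lower bound $\tilde\alpha$ for $W_0$ on the compact set $\overline{B_K(0)}\setminus(B_\delta(a)\cup B_\delta(b))$. In the application $W_0=2\sqrt{W(x,\cdot)}$, and this bound comes from the lower bound in \ref{H4}; under \ref{G4} it can be taken independent of $r$. With it, Lemma \ref{lemma:almost_minimizers_have_locality_prop} and Lemma \ref{lemma:geodesic_distance_is_locally_lipschitz} give, on each such sub-arc, $\tilde\alpha\cdot\mathrm{length}\le C\,f(1+|p|)\,|p-a|+\text{error}$. This is where the terms $f(|p|)|p|$ and $f(|q|)|q|$ come from.

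Your Step 3 needs the same lower bound, because a sub-arc between consecutive hitting times may exit $B_1(a)$. You must also rule out that such a sub-arc comes close to $b$, where $W_0$ degenerates and the bound $W_0\gtrsim f(2^{-k-1})$ fails. The paper handles this with the ordering $\sup\varphi^{-1}(B_\delta(a))<\inf\varphi^{-1}(B_\delta(b))$ for small $\delta$, which it derives from local almost-minimality.
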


\begin{figure}
    \includegraphics[width=0.8\linewidth]{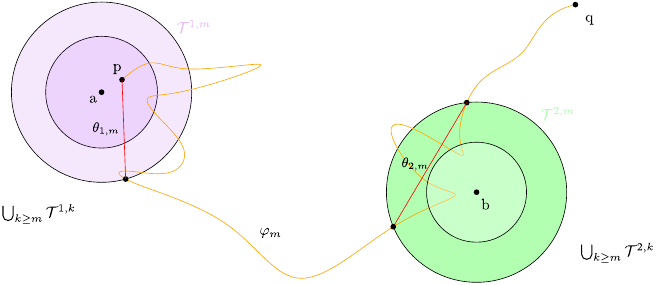}
    \caption{The substitution of $\tilde \varphi_m$ in Theorem \ref{existence-of-geodesics-with-bounded-path-length}.}
    \label{fig:geodesic}
\end{figure}

\begin{proof} We divide the proof in steps.\\[0.5em]
    \textbf{Step 0: The setup.}
    Let $\{ \tilde \varphi_m \}_m$ be a sequence such that
    \[
       \int_{-1}^1 W_0(\tilde \varphi_m)|\tilde \varphi_m'| \,\mathrm{d}s \leq \dd_{W_0}(p, q) + \frac{d_{W_0}(a,b)}{2^{2m}}, 
    \]
    for every $m \in \N$. Define the following sets for $k \in \N$ and $\delta > 0$:
    \[
        \T^{1,k} := \left\{ u \in \R^M: \frac{\delta}{2^{k+1}} \leq |u-a| \leq \frac{\delta}{2^{k}}\right\},
    \]
    \[
        \T^{2,k} := \left\{ u \in \R^M: \frac{\delta}{2^{k+1}} \leq |u-b| \leq \frac{\delta}{2^{k}}\right\}.
    \]
    For $l = 1,2$ set 
    \[
        t_0^l := \inf \, (\tilde \varphi_m)^{-1} \left(\bigcup_{k \geq m}\T^{l,k}\right) \quad \text{ and } \quad
        t_1^l := \sup \, (\tilde \varphi_m)^{-1} \left(\bigcup_{k \geq m}\T^{l,k}\right).
    \]
    Here we point out that these values need not be well-defined if $\tilde \varphi_m$ does not pass through the corresponding regions. In what follows, without loss of generality, we assume that those values are well-defined since the analysis in the other cases is carried out analogously. By the local optimality of $\eps$-minimizers for the geodesic distance (cf.\ Lemma \ref{lemma:almost_minimizers_have_locality_prop}), we know that for any $r, t \in [-1,1]$ with $\varphi_m(r) \in B_\delta(a)$ and $\varphi_m(t) \in B_\delta(b)$
    \[
        \left|\int_{r}^tW_0(\tilde \varphi_m)|\tilde \varphi_m'|\, \mathrm ds \right|  \leq d_{W_0}(a,b) + \omega(\delta) + \frac{d_W(a,b)}{2^{2m}}
    \]
    for some modulus of continuity $\omega$. In particular, for small $\delta > 0$, we get 
    \[
        \left|\int_{s}^tW_0(\tilde \varphi_m)|\tilde \varphi_m'|\, \mathrm ds \right| < \frac{5}{2}d_W(a,b) < 3d_W(a,b).
    \]
    This implies 
    \[
        \sup \, (\tilde \varphi_m)^{-1} (B_\delta(a)) < \inf \, (\tilde \varphi_m)^{-1} (B_\delta(b))
    \]
    or 
    \[
        \sup \, (\tilde \varphi_m)^{-1} (B_\delta(b)) < \inf \, (\tilde \varphi_m)^{-1} (B_\delta(a)).
    \]
    We assume that we are in the first case. The idea of our strategy is the following. In this case, we immediately infer $t_1^1 \leq t_0^2$ for a fixed small $\delta  > 0$ (independent of $m$) (cf.\ \cite[Lemma 2.8]{DeutschSolidSolid}). Now, in what follows, we will substitute $\tilde \varphi_m$ with a straight path in $\bigcup_{k \geq m}\T^{l,k}$. This substitution increases the path length and the length with respect to $W_0$ only by a negligible amount. Then, we estimate the path length of $\tilde \varphi_m$ in each $\T^{l,k}$ separately by finding a suitable competitor for the geodesic distance in this set. Lastly, we estimate the path length of the remaining curve. \\[-0.5em]
    
    \textbf{Step 1: Substituting $\tilde \varphi_m$ around the wells.} Consider the curve 
    \[
        \varphi_m := \begin{cases}
            \theta_{1,m} & \text{ in } \text{conv}\left((\tilde \varphi_m)^{-1}(\bigcup_{k \geq m}\T^{1,k})\right), \\
            \theta_{2,m} & \text{ in } \text{conv}\left((\tilde \varphi_m)^{-1}(\bigcup_{k \geq m}\T^{2,k})\right), \\
            \tilde \varphi_m & \text{ otherwise},
        \end{cases}
    \]
    where $\text{conv}(A)$ denotes the convex hull of a set $A \subset \R^M$, and $\theta_{1,m}$, $\theta_{2,m}$ are defined as follows.
    Write $\tilde \varphi_m(t_0^1) = P_1 + a$ and $\tilde \varphi_m(t_1^1) = P_2 + a$ with $P_1, P_2 \in \R^M$. Now, define $\theta_{1,m}$ as the straight line from $P_1$ to $P_2$
    \[
        \theta_{1,m}(t) := \frac{t_1^1 - t}{t_1^1 - t_0^1}P_1 + \frac{t - t_0^1}{t_1^1 - t_0^1}P_2 + a.
    \]
    Analogously, we define $\theta_{2,m}$.
    Notice now that by \ref{G2}
    \begin{align*}
        \int_{t_0^1}^{t_1^1} {W_0(\theta_{1,m})}|\theta_{1,m}'| \,\mathrm{d}s &\leq Cf\left( \frac{\delta}{2^m} \right)|P_1 - P_2|
        \leq \frac{C}{2^m},
    \end{align*}
    where we used $|P_1 - P_2| \leq (1/2)^{m-1}$ and $f(1) < 1/2$ in the last line. Similarly, we infer 
    \begin{align*}
        \int_{t_0^1}^{t_1^1} {W_0(\theta_{2,m})}|\theta_{2,m}'| \,\mathrm{d}s &\leq \frac{C}{2^m}.
    \end{align*}
    In particular, this substitution only increases the length with respect to $W_0$ by a small amount, which means that for large $m$ these are $1/2$-minimizers of the geodesic distance. Applying Lemma \ref{lemma:eps_minimizers_are_bounded}, we get a constant $K > 0$  such that 
    \begin{align}\label{eq:uniform_estimate_sequence}
        \|\varphi_m\|_{L^\infty} \leq K.
    \end{align}
    We now want to estimate the path length of $\varphi_m$ in the interval $[-1, t^1_0]$. Assume $t^1_0 > -1$, otherwise there is nothing to estimate. Now, we define inductively a decreasing sequence $s_0, s_1, .., s_{m}$ the following way: Set
    \[
        s_0 := t^1_0,
    \]
    and, for $k = 1, .., m$
    \[
        s_{k} := 
            \inf_m (\tilde \varphi_m)^{-1}(\T^{1, m - k}) \cap [-1, s_{k - 1}].
    \]
    Notice that these values need not be well defined for all $k$. Indeed, we denote the last index where it is well defined with $k_0$. Notice that $s_{k_0} = -1$ has to hold if $k_0 < m$. \\[-0.5em]
    
    \textbf{Step 2: Estimating the path length of $\varphi_m|_{[s_{k+1}, s_k]}$.} 
    Notice first, that by construction and by \ref{G2}
    \[
        W_0(\tilde \varphi_m)|_{[s_{k + 1}, s_{k}]} \geq f\left(\frac{\delta}{2^{m - k}}\right).
    \]
    Now, we use the local optimality property of Lemma \ref{lemma:almost_minimizers_have_locality_prop}, i.e., we have for every $k \in \{0,.., k_0 - 1\}$ 
    \[
        \int_{s_{k + 1}}^{s_{k}} W_0(\tilde \varphi_m)|\tilde \varphi_m'| \,\mathrm{d}s \leq \dd_{W_0}\left(\tilde \varphi_m(s_{k + 1}), \tilde \varphi_m(s_{k })\right) + \frac{C}{2^{2m}}.
    \]
    Consider the circular arc $\zeta^k_m$ connecting $\tilde \varphi_m(s_{k+1})$ and $\tilde \varphi_m(s_k)$. More precisely, write $\tilde \varphi_m(s_{k+1}) = P_1 + a$ and $\tilde \varphi_m(s_{k}) = P_2 + a$ with corresponding $P_1, P_2 \in \R^M$. Then, let $\gamma$ be the circular arc from Lemma \ref{arc-length-estimate} connecting $P_1$ and $P_2$ in the plane spanned by $\{0, P_1, P_2\}$ which we reparameterise to the interval $[s_{k+1}, s_k]$. Now, set 
    \[
        \zeta^k_m := \gamma + a.
    \]
    We have 
    \begin{align*}
        f\left(\frac{\delta}{2^{m - k}}\right) \int_{s_{k + 1}}^{s_{k}}|(\tilde \varphi_m)'|\, \mathrm{d}s 
        &\leq \int_{s_{k + 1}}^{s_{k}} W_0(\tilde \varphi_m)|(\tilde \varphi_m)'|\, \mathrm{d}s \\
        &\leq \dd_{W_0}\left(\tilde \varphi_m(s_{k + 1}), \tilde \varphi_m( s_{k })\right) + \frac{1}{2^{2m}} \\
        &\leq \int_{s_{k + 1}}^{s_{k}}W_0(\zeta_m^{k})|(\zeta_m^{k})'|\, \mathrm{d}s + \frac{1}{2^{2(m - k)}}.
    \end{align*}
    Now, observe that we can get the following estimate by using Lemma \ref{arc-length-estimate} and the doubling condition on $f$:
    \begin{align*}
        \int_{s_{k + 1}}^{s_{k}} W_0(\zeta^{k}_m)|(\zeta^{k}_m)'|\, \mathrm{d}s & \leq c_2\int_{s_{k + 1}}^{s_{k}}f(|\zeta^k_m-a|)|( \zeta^{k}_m)'|\, \mathrm{d}s \\
        & \leq C f\left(\frac{\delta}{2^{m - k - 1}}\right)\int_{s_{k + 1}}^{s_{k}}|( \zeta_m^{k})'|\, \mathrm{d}s \\
        &\leq Cf\left(\frac{\delta}{2^{m-k}} \right) \frac{1}{2^{m - k}}.
    \end{align*}
    Putting these inequalities together, we derive
    \[
        \int_{s_{k+1}}^{s_k}|(\tilde \varphi_m)'|\, \mathrm{d}s \leq \left(\frac{C}{2^{m - k}} \right) .
    \]
    Summing over $k$ we infer
    \begin{align}\label{eq:geodesic_estimate_1}
        \int_{s_{k_0}}^{s_{0}} |(\tilde \varphi_m)'| \, \mathrm{d}s &\leq C.
    \end{align}

    \textbf{Step 3: Estimating the path length of $\varphi_m|_{[-1, s_{k_0}]}$.} 
    If $k_0 < m$ we are done since then $s_{k_0} = -1$ and we have nothing to estimate. In the other case, we observe that \eqref{eq:uniform_estimate_sequence} implies $W_0(\varphi_m|_{(-1, s_m)}) \geq \tilde \alpha$ where 
    $$\tilde \alpha := \min_{B_K(0)\setminus(B_\delta(a)\cup B_\delta(b))}W_0(u)$$
    and $K > 0$ is from $\eqref{eq:uniform_estimate_sequence}$.
    Note that if the stronger assumption \ref{G4} holds, we can choose 
    \[
        \tilde \alpha = \min\left\{ \alpha, \min_{B_R(0)\setminus(B_\delta(a)\cup B_\delta(b))}W_0(u) \right\}
    \]
    where $\alpha$ and $R$ are given by \ref{G4}.
    Using this, we estimate with \ref{G1}: 
    \begin{align}\label{eq:geodesic_estimate_2}
        \tilde \alpha \int_{-1}^{s_{m}} |(\tilde \varphi_m)'| \, \mathrm{d}s &\leq \int_{-1}^{s_m} W_0(\tilde \varphi_m)|(\tilde \varphi_m)'|\, \mathrm{d}s \nonumber \\
        &\leq \dd_{W_0}\left(\tilde \varphi_m(-1), \tilde \varphi_m( s_{m} )\right) + \frac{1}{2^{2m}} \nonumber \\
        &\leq \left(\sup_{u \in B_{1 + |p|}(a)} W_0(u)\right) |p - \tilde \varphi_m(s_m)| + \frac{1}{2^{2m}} \nonumber \\
        &\leq C \left( f(1 + |p|)|p - a| \right) + \frac{1}{2^{2m}}
    \end{align}
    Notice now that the doubling condition of $f$ (from \ref{G2}) implies $f(1 + |p|) \leq C(1 + f(|p|))$ so we can use \eqref{eq:geodesic_estimate_1} and \eqref{eq:geodesic_estimate_2} to obtain
    \[
        \int_{-1}^{t_0^1} |(\varphi_{m})'| \, \mathrm{d}s \leq C(1 + f(|p|)|p|).
    \]
    In an analogous way, one estimates the path lengths in the remaining intervals $[t_1^1, t_0^2]$ and $[t_0^2, t_1^2]$ to derive 
    \[
        \int_{t_1^1}^{t_0^2} |(\varphi_{m})'| \, \mathrm{d}s \leq C(1 + f(|q|)|q| + f(|p|)|p|).
    \]
    and 
    \[
        \int_{t_0^2}^1 |(\varphi_{m})'| \, \mathrm{d}s \leq C(1 + f(|q|)|q|).
    \]

    \textbf{Step 4: Existence of a solution for $\dd_{W_0}(p,q)$.} 
    This follows by a standard argument based on the Ascoli-Arzel\`{a} Theorem. For instance, see \cite[Lemma 3.1]{CriGra} and note that the continuity of the potential can be weaken to lower semi-continuity.
\end{proof}

\begin{remark}\label{remark:zuniga_sternberg}
Note that we are able to prove the existence of a geodesics for $\dd_{W_0}(p,q)$ even if the potential $W_0$ is not continuous, but lower semi-continuous and satisfies certain growth conditions.
Our result is in the same spirit as that obtained by Zuniga and Sternberg in \cite{ZunSter}, where they require the potential $W_0$ to be continuous, but without assuming any growth condition near the wells.
In their case, though, they cannot ensure that geodesics have finite Euclidean length. 
Indeed, as Example \ref{ex:geodesics_infinite_Euclidean_length} shows, there exist potentials satisfying the assumptions of the main result by Zuniga and Sternberg whose corresponding geodesics have infinite Euclidean length.
\end{remark}

\begin{figure}
    \includegraphics[width=0.5\linewidth]{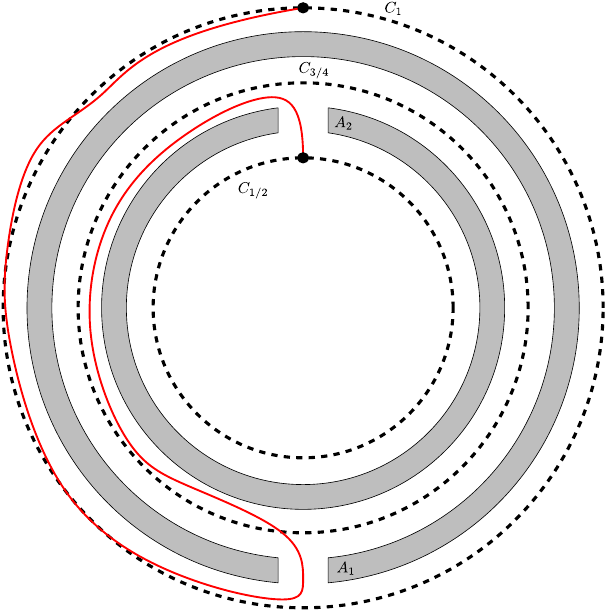}
    \caption{The idea for the construction of potentials $W$ in Example \ref{ex:geodesics_infinite_Euclidean_length}. Heuristically, we create obstacles in the energy landscape such that geodesics connecting points in $C_1$ to points in $C_{1/2}$ cannot pass through.}
\end{figure}

\begin{example}\label{ex:geodesics_infinite_Euclidean_length}
We sketch a construction of a potential for which geodesics with respect to the induced distance exist, but necessarily have unbounded Euclidean path length. \\
Let 
$$ \frac{1}{2} < r_4 < r_3  < \frac{3}{4} < r_2 < r_1 < 1.$$
For $\delta < \pi/16$ consider the annular sectors
\[
    A_1 \coloneqq \left\{ x \in \R^2 : r_2 < |x| < r_1 \, -\frac{\pi}{2} - \delta < \arg(x) < -\frac{\pi}{2} + \delta \right\},
\]
and
\[
    A_2 \coloneqq \left\{ x \in \R^2 : r_4 < |x| < r_3, \, \frac{\pi}{2} - \delta < \arg(x) < \frac{\pi}{2} + \delta \right\}
\]
where $\arg: \R^2 \to [-\pi, \pi)$ denotes the polar angle.
Let 
\[
    C_r := \{ x \in \R^2 : |x| = r \}
\]
be the circle lines with radius $r > 0$, and 
\[
    S_1 = \{0\}\times(-1, -3/4) \qquad \text{ and } \qquad S_2 = \{0\} \times (1/2, 3/4).
\]
For $M_1 = 1$ and $M_1/2 > \eps_1 > \eps_2$ define
\[
    \tilde W(p) := \begin{cases}
        \eps_1 & p \in C_1 \cup S_1, \\
        M_1 & p \in A_1, \\
        \eps_1 & p \in C_{3/4}, \\
        M_1 & p \in A_2, \\
        \eps_2 & p \in C_{1/2}, \\
        (1-t) \eps_2 + t \eps_1 & p = \left(0, \frac{t+1/2}{2}\right) \in S_2
    \end{cases}
\]
and let $W$ be the natural piecewise linear extension to $\overline{B_1/B_{1/2}}$. If $\eps_1$ is chosen small enough, any geodesic $\varphi$ with respect to $\dd_W$ connecting two points $p_1 \in C_1$ and $p_2 \in C_{1/2}$ will not pass through $A_1, A_2$. Indeed, suppose that $\varphi(s) \in A_1 \cup A_2$ and $\varphi(t) \in C_{3/4}$ for some $s,t \in [-1,1]$. Then,
\[
    \left| \int_{s}^t W_0(\varphi)|\varphi'| \, \dd s \right| \geq C \int_{\eps}^{M_1} r \dd r \geq CM_1^2.
\]
But taking an injective competitor $\tilde \varphi$ that is contained in $C_1 \cup S_1 \cup C_{3/4} \cup S_2 \cup C_{1/2}$ we derive 
\[
    \dd_{W}(p_1, p_2) \leq C\eps_1
\]
which yields a contradiction for $\eps_1 $ chosen small enough since $\varphi$ is a geodesic with respect to $\dd_{W}(p_1, p_2)$. This has the implication that the geodesic does not pass through $A_1$ and $A_2$, and we have that the path length of $\varphi$ is bounded from below by $1$.  \\
To extent $W$ to $\overline{B_{1/n}\setminus B_{1/(n+1)}}$, we repeat this construction with suitable $M_n > 0$ and $\eps_n > 0$ such that
$$  
    M_n \xrightarrow{n \to \infty} 0,
$$ 
and that any geodesic $\psi$ with respect to $d_W$ connecting two points in $C_{1/n}$ and $C_{1/(n+1)}$ fulfils
\begin{align}\label{eq:piecewise_lower_bound_pathlength}
    \int_{-1}^1 |\psi'| \, \dd r \geq \frac{C}{n}.
\end{align}
By extending $W$ with $M_1$ outside of the closed unit ball, we get a continuous potential defined on $\R^M$. In particular, geodesics between two points exist (cf.\ \cite{ZunSter}). 
Next, let $\varphi$ be a geodesic connecting $p \in C_1$ with $0$. Since $\varphi([-1,1]) \cap C_{1/n} \neq \emptyset$, we can take an increasing sequence $s_n \in \varphi^{-1}(C_{1/n})$. Now, we can compute a lower bound on the path length by 
\begin{align*}
    \int_{-1}^1 |\varphi'| \dd r \geq \sum_{n \in \N} \int_{s_n}^{s_{n+1}} |\varphi'| \, \dd r \geq  
    \sum_{n \in \N} \frac 1 n = +\infty.
\end{align*}
We have shown that any geodesic with respect to this constructed potential $W$ always has unbounded path length.
\end{example}
In our setting, we will encounter potentials that jump. For this reason, we introduce an adapted version of a geodesic distance function.
\begin{definition}
    Let $W_0, W_1: \R^M \to [0, \infty)$ be non-negative Borel functions. We define the adapted geodesic distance by
    \[
        \dd_W(p,q) := \inf_{r \in \R^M} (\dd_{W_0}(p, r) + \dd_{W_1}(r, q)).
    \]
\end{definition}

An analogous result as in Lemma \ref{lemma:geodesic_distance_is_locally_lipschitz} holds also for the adapted distance. More precisely, we have the following statement.
\begin{lemma}\label{lemma:adapted_geodesic_distance_is_locally_lipschitz}
    Suppose that $W_0, W_1: \R^M \to [0,\infty)$ are locally bounded Borel functions. Then, $\dd_W$ is locally Lipschitz, and the local Lipschitz constants of $\dd_W$ are lower than or equal to the Lipschitz constants of $\dd_{W_0}$ and $\dd_{W_1}$.
\end{lemma}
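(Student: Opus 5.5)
The plan is to reduce the statement to Lemma~\ref{lemma:geodesic_distance_is_locally_lipschitz} by observing that $\dd_W$ is an infimum of functions which are locally Lipschitz in each of $p$ and $q$. Fix a bounded convex set $K\subset\R^M$, and let $L_0$, $L_1$ denote the Lipschitz constants of $\dd_{W_0}$ and $\dd_{W_1}$ on $K$; by Lemma~\ref{lemma:geodesic_distance_is_locally_lipschitz}, $L_i\le \|W_i\|_{L^\infty(K)}$. The key point is that, for fixed $r$, the map $p\mapsto \dd_{W_0}(p,r)$ satisfies
\[
|\dd_{W_0}(p,r)-\dd_{W_0}(p',r)|\le \dd_{W_0}(p,p')\le L_0|p-p'|
\]
for $p,p'\in K$, \emph{uniformly in $r\in\R^M$}. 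This follows from the triangle inequality for $\dd_{W_0}$, obtained by concatenating curves, together with the bound $\dd_{W_0}(p,p')\le \|W_0\|_{L^\infty(K)}|p-p'|$, obtained by testing with the segment $[p,p']\subset K$ (here convexity of $K$ is used). The same holds for $q\mapsto\dd_{W_1}(r,q)$ with $L_1$.

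Next, I will pass the estimate through the infimum over $r$. For $p,p',q,q'\in K$ and any $r$,
\[
\dd_{W_0}(p,r)+\dd_{W_1}(r,q)\le \dd_{W_0}(p',r)+\dd_{W_1}(r,q')+L_0|p-p'|+L_1|q-q'|.
\]
Taking the infimum in $r$ on both sides and then swapping the roles of $(p,q)$ and $(p',q')$ gives
\[
|\dd_W(p,q)-\dd_W(p',q')|\le L_0|p-p'|+L_1|q-q'|.
\]
This is the claimed local Lipschitz bound: in each variable the constant is at most the corresponding one for $\dd_{W_0}$, respectively $\dd_{W_1}$. Finiteness of $\dd_W$ must also be checked, so that subtracting infima is legitimate. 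This follows since $\dd_W(p,q)\le \dd_{W_0}(p,p)+\dd_{W_1}(p,q)=\dd_{W_1}(p,q)<\infty$, by choosing $r=p$ and using local boundedness of $W_1$ along a segment.

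The only mildly delicate step is the uniformity in $r$, since $r$ ranges over all of $\R^M$ and not only over $K$. It is handled by the triangle inequality, which needs no control of the potentials near $r$; only the segment between $p$ and $p'$ enters. So I expect no real obstacle beyond making the triangle inequality for the geodesic distance precise. This amounts to reparametrizing the two curves to $[-1,0]$ and $[0,1]$ and concatenating them, and the cost is unchanged by reparametrization invariance.
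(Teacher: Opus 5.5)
Your proof is correct. The paper gives no proof of this lemma and only calls it analogous to Lemma~\ref{lemma:geodesic_distance_is_locally_lipschitz}; your route fills in that omitted step in the natural way, and even yields the sharper per-variable bound $|\dd_W(p,q)-\dd_W(p',q')|\le L_0|p-p'|+L_1|q-q'|$. That route is: the triangle inequality makes $p\mapsto\dd_{W_0}(p,r)$ and $q\mapsto\dd_{W_1}(r,q)$ Lipschitz on $K$ uniformly in $r\in\R^M$, and this bound passes through the infimum once finiteness is checked.

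A minor caveat, inherited from the paper's notation: for $M\ge 2$ a segment is Lebesgue-null, so testing with $[p,p']$ bounds $\dd_{W_0}(p,p')$ by the pointwise supremum of $W_0$ on $K$ (finite by local boundedness), not directly by an essential supremum. This does not affect your main estimate, which only uses $\dd_{W_0}(p,p')\le L_0|p-p'|$.
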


If \ref{G3} holds then we also can prove an analogous result as in Lemma \ref{lemma:eps_minimizers_are_bounded}, whose proof works in exactly the same way. 
\begin{proposition}\label{prop:infimum_attained_in_ball}
    Suppose that $W_0, W_1: \R^M \to [0,\infty)$ are locally bounded Borel functions that fulfill \ref{G3}, and let $r > 0$. Then, there exists $K_0 > r$ only depending on $r$,  $|a|$, $|b|$, $f$ and $C_G$ (given in \ref{G3}) such that 
    \[
        \dd_W(p,q) = \inf_{s \in B_{K_0}(0)} (\dd_{W_0}(p, s) + \dd_{W_1}(s, q)),
    \]
    for all $p, q \in B_R(0)$. 
\end{proposition}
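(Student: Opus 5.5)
Fix $r>0$ and points $p,q \in B_r(0)$. We will show that whenever $s$ lies outside a sufficiently large ball $B_{K_0}(0)$, the value $\dd_{W_0}(p,s)+\dd_{W_1}(s,q)$ already exceeds $\dd_W(p,q)$. Those $s$ can then be discarded from the infimum.

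First we fix a reference upper bound. Taking $s=p$ in the definition of $\dd_W$ gives
\[
\dd_W(p,q)\le \dd_{W_1}(p,q)\le M_r:=\max_{p',q'\in \overline{B_r(0)}}\dd_{W_1}(p',q').
\]
This maximum is finite, because $\dd_{W_1}$ is locally Lipschitz by Lemma \ref{lemma:geodesic_distance_is_locally_lipschitz}. In the same way we set $M_r':=\max_{p',q'\in\overline{B_r(0)}}\dd_{W_0}(p',q')$, and we put $\Lambda:=M_r+M_r'+1$. Both constants depend only on $r$, $|a|$, $|b|$, $f$ and $C_G$, through the Lipschitz bound in that lemma. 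Strictly speaking, that bound uses \ref{G1}. If only \ref{G3} is available, we instead use local boundedness of $W_0$ and $W_1$, so that the constant depends on $\|W_i\|_{L^\infty(B_r)}$.

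Next we lower-bound the cost of any curve leaving a large ball, copying the proof of Lemma \ref{lemma:eps_minimizers_are_bounded}. By the non-summability in \ref{G3}, there is a $K_0>r$ such that
\[
\int_r^{K_0} f(t)\,\dd t > C_G\,\Lambda .
\]
We may choose $K_0$ as in Remark \ref{remark:geodesic_parameter_discussion_1}, using the left inverse $G$, so that $K_0$ depends only on the listed quantities. Let $\gamma$ be any $W^{1,1}$ curve from a point of $B_r(0)$ to a point $s$ with $|s|\ge K_0$. Then $|\gamma|$ crosses every level $t\in[r,K_0]$. Since $\big||\gamma|'\big|\le|\gamma'|$, the coarea (change of variables) argument along the radial coordinate, together with $W_i(u)\ge C_G^{-1}f(|u|)$, gives
\[
\int W_i(\gamma)|\gamma'|\,\dd s \;\ge\; \frac1{C_G}\int_r^{K_0} f(t)\,\dd t > \Lambda .
\]
This holds for both $i=0,1$. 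Taking the infimum over curves yields $\dd_{W_0}(p,s)\ge\Lambda$ and $\dd_{W_1}(s,q)\ge\Lambda$ whenever $|s|\ge K_0$.

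We conclude by comparison. For $|s|\ge K_0$ we get
\[
\dd_{W_0}(p,s)+\dd_{W_1}(s,q)\ \ge\ \Lambda\ >\ M_r\ \ge\ \dd_W(p,q).
\]
Hence such $s$ are never competitive, and the infimum over $\R^M$ coincides with the infimum over $B_{K_0}(0)$. The only delicate point is making the radial lower bound rigorous for a merely Borel $W_i$. For this we use the lower bound through the continuous function $f(|\cdot|)$, which makes the one-dimensional change of variables $t=|\gamma(\tau)|$ legitimate, exactly as in the scalar formula \eqref{eq:sigma_scalar_intro}. We must also keep track of which $R$ in the statement is meant; we read it as the given $r$.
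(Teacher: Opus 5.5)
Your proof is correct and follows essentially the paper's route. The paper proves this ``in exactly the same way'' as Lemma~\ref{lemma:eps_minimizers_are_bounded}: non-summability in \ref{G3} makes every $s$ outside a large ball cost more than the a priori bound $\dd_W(p,q)\le \dd_{W_1}(p,q)$ on $B_r(0)$, and your exact chain-rule bound by $C_G^{-1}\int_r^{K_0} f(t)\,\dd t$ simply replaces the paper's Riemann-sum partition. Your caveat on constants is also well-taken: under \ref{G3} alone this bound, and hence $K_0$, also depends on $\sup_{B_r(0)} W_1$, exactly as in the paper's own argument. It depends only on $r,|a|,|b|,f,C_G$ once \ref{G1} is added, as is done in Corollary~\ref{corol:truncation_constant}.
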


The next corollary is a direct application of Proposition \ref{prop:infimum_attained_in_ball} and Lemma \ref{lemma:eps_minimizers_are_bounded}.
\begin{corollary}\label{corol:truncation_constant}
    Suppose that $W_0, W_1$ are locally bounded Borel functions fulfiling \ref{G1}, \ref{G3}, and let $r > 0$. Then, there exists $M_0 = M_0(r, a, b, f, C_G) > 0$ such that for all $M > M_0$ we have that 
    \[
        \dd_{W \land M}(p, q) = \dd_{W}(p, q),
    \]
    for all $p, q \in B_R(0)$, where 
    \[
        \dd_{W \land M}(p, q) := \inf_{r \in \R^N} ( \dd_{W_0 \land M}(p, r) + \dd_{W_1 \land M}(r, q) ).
    \]
\end{corollary}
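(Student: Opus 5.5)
The plan is to show that, for $M$ large, truncating the potentials at height $M$ changes neither the optimal paths nor the optimal intermediate point. Then the two infima coincide. Since $W_0\land M\le W_0$ and $W_1\land M\le W_1$ pointwise, we trivially have $\dd_{W_0\land M}\le \dd_{W_0}$, $\dd_{W_1\land M}\le \dd_{W_1}$, and hence $\dd_{W\land M}(p,q)\le \dd_W(p,q)$ for all $p,q$. Only the reverse inequality needs work.

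\textbf{Step 1: reduce to bounded intermediate points.} By Proposition \ref{prop:infimum_attained_in_ball}, there is $K_0>r$, depending only on $r,a,b,f,C_G$, such that the infimum defining $\dd_W(p,q)$ can be restricted to $s\in B_{K_0}(0)$ whenever $p,q\in B_r(0)$. For the truncated problem I want the same restriction, with a constant that does not depend on $M$. I would rerun the contradiction argument of Lemma \ref{lemma:eps_minimizers_are_bounded} for $W_i\land M$. The lower bound $\frac1{C_G}f(|u|)\land M$ is still non-summable once $M$ exceeds a fixed threshold: choose the partition $t_0<\dots<t_n$ from \eqref{eq:correct_partition_choice}, and take $M\ge \max_{[t_0,t_n]}f/C_G$, so that the truncation is inactive on the relevant range. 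The upper bound $\max_{B_r}\dd$ only decreases under truncation. So for $M\ge M_1$, every $\varepsilon$-minimizer ($\varepsilon<1$) of $\dd_{W_i\land M}$ between points of $B_{K_0}(0)$ stays in a fixed ball $B_K(0)$. Likewise, an intermediate point $s$ with $|s|$ large would force a path from $p$ to $s$ to cross the same annuli, costing more than $\dd_W(p,q)+1$. Hence the truncated infimum can also be taken over $s\in B_{K_0}(0)$, for a possibly enlarged $K_0$ independent of $M$.

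\textbf{Step 2: choose $M_0$ and compare curves.} Set
\[
M_0:=\max\Big\{M_1,\ \sup_{B_K(0)}W_0,\ \sup_{B_K(0)}W_1\Big\}.
\]
This is finite because by \ref{G1} both potentials are bounded by $C_Gf$ on bounded sets, and it depends only on $r,a,b,f,C_G$. Fix $s\in B_{K_0}(0)$ and an $\varepsilon$-minimizer $\varphi$ for $\dd_{W_0\land M}(p,s)$. By Step 1, $\varphi$ takes values in $B_K(0)$, where $W_0\land M=W_0$. Therefore
\[
\dd_{W_0}(p,s)\le \int W_0(\varphi)|\varphi'| = \int (W_0\land M)(\varphi)|\varphi'|\le \dd_{W_0\land M}(p,s)+\varepsilon.
\]
The same argument applies to $W_1$. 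Letting $\varepsilon\to0$ and taking the infimum over $s\in B_{K_0}(0)$ gives $\dd_W(p,q)\le \dd_{W\land M}(p,q)$.

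\textbf{Main obstacle.} The delicate point is that the bound on $\varepsilon$-minimizers of the truncated problems must be uniform in $M$. Otherwise $K$, and hence $M_0$, would depend on $M$ and the argument would be circular. I would handle this as in Remark \ref{remark:geodesic_parameter_discussion_2}: the constant $K$ in \eqref{eq:expression_of_constant} is monotone in the data, and the partition needed for the contradiction only involves a compact range of $t$. So any $M$ above $\max_{[t_0,t_n]}f/C_G$ leaves that part of the argument unchanged.
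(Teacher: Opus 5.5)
Your proposal is correct and follows essentially the same route as the paper: the trivial inequality $\dd_{W\land M}\le \dd_W$, then restricting the intermediate point and the $\varepsilon$-minimizers to balls whose radii do not depend on $M$, and finally choosing $M_0$ as the supremum of $W_0,W_1$ on the larger ball so that the truncation is inactive along those curves. Your way of getting uniformity in $M$ is to take $M$ above $f/C_G$ on the compact range of the partition, so that the crossing estimate is literally unchanged. This is a cleaner and more convincing justification than the paper's appeal to Remark \ref{remark:geodesic_parameter_discussion_2} applied to $f\land M$, and it avoids needing the infimum over the intermediate point to be attained.
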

\begin{proof}
    We have
    \[
        \dd_{W \land M}(p, q) \leq \dd_{W}(p, q),
    \]
    for all $p,q \in B_R (0)$.
    Now, let $K_0 > 0$ be the constant given in Proposition \ref{prop:infimum_attained_in_ball} and $K_1$ be the constant of Lemma \ref{lemma:eps_minimizers_are_bounded} with respect to the parameters $K_0$, $|a|$, $|b|$, $f$ and $C_G$. Set 
    $$ M_0 \coloneqq \max \left\{\|W_0 \|_{L^\infty(B_{K_1}(0))}, \|W_1 \|_{L^\infty(B_{K_1}(0))} \right\}.$$ 
    Let $M > M_0$. We observe first that Proposition \ref{prop:infimum_attained_in_ball} coupled with Remark \ref{remark:geodesic_parameter_discussion_2} applied to $\tilde f \coloneqq f \land M$ gives $r_0 \in B_{K_0}(0)$ such that  
    \[
         \dd_W(p,q) = \dd_{W_0\land M}(p, r_0) + \dd_{W_1 \land M}(r_0, q).
    \]    
    By Lemma \ref{lemma:eps_minimizers_are_bounded} together with Remark \ref{remark:geodesic_parameter_discussion_2} and applied to $\tilde f \coloneqq f\land M$, for any $\eps \in (0,1)$ we can find curves $\varphi_0, \, \varphi_1 \in W^{1,1}([-1,1], \R^M)$ which are $\eps$-minimizers of $\dd_{W_0 \land M}(p, r_0)$ (resp. $\dd_{W_1 \land M}(r_0, q)$) such that they are bounded in $L^\infty$ by $K_1$.  In particular, we infer 
    \begin{align*}
        \dd_W(p,q) &\leq \int_{-1}^1 W_0(\varphi_0)|\varphi_0'| \, ds + \int_{-1}^1 W_1(\varphi_1)|\varphi_1'| \, ds  \\
        &=  \int_{-1}^1 (W_0\land M)(\varphi_0)|\varphi_0'| \, ds + \int_{-1}^1 (W_1\land M)(\varphi_1)|\varphi_1'| \, ds \\
        &= \dd_{W_0}(p, r_0) + \dd_{W_1}(r_0, q) \\
        &= \dd_{W \land M}(p, q),
    \end{align*}
    which concludes the proof.
\end{proof}


\subsection{An approximation result for sets of finite perimeter}

In this section, we present an approximation result for sets of finite perimeter with piecewise $C^2$ sets, both in configuration and in energy.
The peculiarity of such an approximation is that the energy is lower semi-continuous, and jumps on a piecewise $C^2$ surface.
Therefore, the smooth approximating sequence needs to preserve the parts of the original set on such boundaries, as much as possible.
A similar result was obtained in \cite[Lemma 5.25]{CriFonGan_moving_sub} in the case the jumps of the energy where on sets with polyhedral boundary.
Here, we reproduce a similar proof by adding more details.

\begin{proposition}\label{prop:approx_sets}
Let $\Omega_1\dots,\Omega_k$ be a partition of $\Omega$, where each $\partial\Omega_i$ is piecewise $C^2$.
Let $g:\Omega\to\R$ be a bounded lower semi-continuous function such that its restriction to $\Omega\setminus\cup_{i=1}^k \partial\Omega_i$ is continuous.
Let $E\subset\Omega$ be a set with finite perimeter in $\Omega$.
Then, there exists a sequence of sets $\{F_n\}_n$ with piecewise $C^2$ boundary such that
\[
\lim_{n\to\infty} |E\triangle F_n|=0,\quad\quad\quad
\lim_{n\to\infty} \left| \int_{\partial^*E} g(x) \dhno(x) 
    - \int_{\partial^*F_n} g(x) \dhno(x) \right| =0.
\]
Moreover, for each $i\in\{1,\dots,k\}$, it holds that $\partial F_n\cap \partial\Omega_i$ is relatively open in $\partial\Omega_i$.
\end{proposition}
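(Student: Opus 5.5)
We split $\partial^*E$ into the part lying on the interfaces $\Sigma\coloneqq\bigcup_{i=1}^k\partial\Omega_i\cap\Omega$ and the part lying inside the open sets $\Omega_i$. Each part is handled separately: on the interfaces we keep $\partial^*E$ as much as possible, and in the interiors we use the strong approximation of Theorem \ref{thm:strong_BV}.

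\textbf{Step 1: the part on the interfaces.} Since $\Sigma$ is contained in finitely many $C^2$ hypersurfaces, $\partial^*E\cap\Sigma$ is $\mathcal{H}^{N-1}$-measurable. Its trace on each $\partial\Omega_i$ can be approximated from outside, in $\mathcal{H}^{N-1}$-measure, by a relatively open set $U_i\subset\partial\Omega_i$. We refine this choice to a finite union of smooth relatively open patches, with the following properties:
\begin{itemize}
\item[(a)] $\mathcal{H}^{N-1}\big((U_i\triangle(\partial^*E\cap\partial\Omega_i))\big)<\eta$;
\item[(b)] the $(N-2)$-dimensional boundaries of the patches have finite measure.
\end{itemize}
Near such a patch we replace $E$ in a thin one-sided tubular neighbourhood of width $\rho$ by the corresponding slab $\{y+t\nu : y\in U_i,\ 0<t<\rho\}$. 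The side is chosen according to the one-sided density of $E$ at the patch. The volume error is $O(\rho)$. The extra perimeter comes from the lateral boundary, of order $\rho\,\mathcal{H}^{N-2}(\partial U_i)$, and from the inner parallel face. The parallel face lies in the interior of $\Omega_i$, where $g$ is continuous. Its contribution is then absorbed by cancellation with the part of $\partial^*E$ near $\Sigma$ that it replaces. This requires choosing the patches so that $E$ has density close to $0$ or $1$ on the appropriate sides, which holds at $\mathcal{H}^{N-1}$-a.e.\ point of $\partial^*E\cap\Sigma$ by the blow-up in De Giorgi's theorem.

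\textbf{Step 2: the interior part.} In each open set $A=\Omega_i\setminus\overline{\Sigma_\rho}$, where $\Sigma_\rho$ is a tubular neighbourhood of $\Sigma$, we apply Theorem \ref{thm:strong_BV} to the modified set. This produces sets $F$ whose boundary is contained in finitely many $C^1$ hypersurfaces. Properties (ii), (iii), (vi) and (vii) of that theorem give
\[
\int_{\partial^*F\cap A}g\,\dhno\to\int_{\partial^*E\cap A}g\,\dhno.
\]
This uses the continuity and boundedness of $g$ on $A$: the symmetric difference of the boundaries has small $\mathcal{H}^{N-1}$-measure, and the common compact part $C$ carries almost all of the measure. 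To upgrade $C^1$ to $C^2$, we mollify the finitely many $C^1$ hypersurfaces, or equivalently take a superlevel set of $\mathbbm{1}_F*\rho_\eps$ at a regular value chosen via coarea/Sard. Since $g$ is continuous on $A$, this changes the weighted perimeter by an arbitrarily small amount. We then glue the interior construction to the slabs of Step 1 across $\partial\Sigma_\rho$, choosing $\rho$ through a coarea/averaging argument so that the gluing surface has small area. Finally we take a diagonal sequence in $\eta,\rho\to0$.

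\textbf{Main obstacle.} The delicate step is the interface part. Because $g$ is only lower semicontinuous across $\Sigma$, the value of $g$ on $\Sigma$ may be strictly smaller than its values nearby. Boundary portions of $E$ lying on $\Sigma$ must therefore remain exactly on $\Sigma$, and not merely near it, otherwise the energy could jump up. Conversely, no new boundary may be created on $\Sigma$ beyond $\eta$ in measure. I expect the control of the lateral error and the density argument needed to decide on which side to place the slab to be the hardest part. The relative openness of $\partial F_n\cap\partial\Omega_i$ in $\partial\Omega_i$ is built in by construction, since the $U_i$ are relatively open.
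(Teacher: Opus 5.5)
\textbf{There is a genuine gap: your set is not defined inside the layer $\Sigma_\rho$.} Your final set is the de Gromard construction on $\Omega_i\setminus\overline{\Sigma_\rho}$ together with one-sided slabs over the patches $U_i$. Nothing is prescribed on the far side of a slab, or over the part of $\Sigma$ not covered by patches.

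That uncovered part is generally large. By Federer's theorem, $\hno$-a.e.\ point of $\Sigma$ lies in $\partial^*E$, in $E^{(1)}$, or in $E^{(0)}$. Take $E$ a ball and $\Sigma$ a hyperplane through its centre. Then $\hno(\partial^*E\cap\Sigma)=0$, so there are no patches, and your $F$ is the ball minus a slit of width $2\rho$. The gluing surface contains two discs of total area close to $2\hno(\Sigma\cap E)$ for every $\rho$, so no averaging in $\rho$ helps.

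The obvious alternatives fail as well:
\begin{itemize}
\item keeping $E$ inside the layer destroys the piecewise $C^2$ structure;
\item mollifying across $\Sigma$ pushes the pieces of $\partial^*E$ lying on $\Sigma$ off $\Sigma$, which the lower semicontinuity of $g$ forbids.
\end{itemize}
A repair needs several further ingredients. You would need relatively open patches for $\Sigma\cap E^{(1)}$ and $\Sigma\cap E^{(0)}$, on which the whole layer is filled or emptied, and you would have to empty the far side of each slab. You would need walls between patches, costing $O(\rho\,\mathcal{H}^{N-2})$, and trace matching on $\partial\Sigma_\rho$. Finally, the junctions of the $C^2$ pieces of $\Sigma$, and $\Sigma\cap\partial\Omega$, must be isolated by small balls, since normal coordinates and slabs do not exist there.

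Two further steps need more than you give. First, the pointwise De Giorgi blow-up does not control a slab of fixed width uniformly over a patch. You need the $L^1$ convergence of one-sided averages to the BV trace. Second, the inner face is not handled by a cancellation coming from continuity of $g$. Its area is bounded by slicing, by $\hno(\partial^*E\cap(\Sigma_\rho\setminus\Sigma))$ plus the patch error, and both vanish.

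\textbf{How the paper avoids this.} It reverses your order and applies de Gromard's theorem (Theorem \ref{thm:strong_BV}) on all of $\Omega$ first. As your own Step 2 computation shows, this controls $\int g\dhno$ for any bounded Borel $g$, with no continuity needed. The reason is that $\partial^*E$ and $\partial L$ agree on the compact set $C$ and differ only on sets of small $\hno$-measure. In particular, the portion of $\partial^*E$ lying on the interfaces is kept exactly.

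For the resulting $L$, small balls $S$ are removed around its singular set, and then the contact set $\partial L\cap\partial\Omega_i$ is compact. It splits into two compact pieces $C_i^\pm$, according to whether the normals agree or are opposite, and these lie at positive distance. They are approximated by disjoint relatively open patches $P_i,N_i$ with $\mathcal{H}^{N-2}$-finite boundary. A single global smoothing $L_\delta$, justified by the continuity of $g$ off the interfaces, handles everything else, including the regions where $E$ fills or avoids both sides of $\Sigma$. Only one-sided slabs over $P_i,N_i$ are added, controlled via the trace theorem. So no interior/interface decomposition or gluing is ever needed.
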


\begin{proof}
Fix $\lambda>0$. We show that there exists a set $F\subset \Omega$ with piecewise $C^2$ boundary such that
\[
|E\triangle F|\leq\lambda,\quad\quad\quad
\left| \int_{\partial^*E} g(x) \dhno(x)
    - \int_{\partial^*F} g(x) \dhno(x) \right| \leq \lambda,
\]
and such that for each $i\in\{1,\dots,k\}$ it holds that $\partial F\cap \partial\Omega_i$ is relatively open in $\partial\Omega_i$.\\

\textbf{Step 1: Piecewise $C^1$ approximation.}
Fix $\mu>0$.
We claim that there exists a set $L\subset\Omega$ with boundary contained in a finite union of $C^1$ hypersurfaces, such that
\[
|E\triangle L|\leq\mu,\quad\quad\quad
\left| \int_{\partial^*E} g(x) \dhno(x)
    - \int_{\partial^*L} g(x) \dhno(x) \right| \leq \mu.
\]
Indeed, fixed $\varepsilon>0$, let $L\subset\Omega$ be the set given by the strong approximation result sets of finite perimeter by de Gromard (see Theorem \ref{thm:strong_BV}).
Then, it holds that
\[
|E\triangle L|\leq\varepsilon.
\]
Moreover, using the other properties of $L$ that Theorem \ref{thm:strong_BV} ensured, we get that
\begin{align*}
\Bigl| \int_{\partial^*E} g(x) \dhno(x)
    &- \int_{\partial^* L} g(x) \dhno(x) \Bigr| \\
&\leq
    \int_{\partial^*E\setminus \partial^* L} g(x)\, \dhno(x) 
    +\int_{\partial^* L \setminus \partial^*E} g(x)\, \dhno(x) \\
&\leq C_1\left[\, \hno\left(\partial^*E\setminus \partial^* L \right)
    + \hno\left( Q\cap \partial^*L \setminus \partial^*E \right) \,\right] \\
&\leq C_1\left[\, |D\ca_E|(\Omega\setminus C )
    + \hno\left(\partial^* L \setminus \partial^*E \right) \,\right] \\
&\leq 2 C_1\varepsilon,
\end{align*}
where $C\subset\Omega$ is the set provided by Theorem \ref{thm:strong_BV} , and we used the fact that the function $g$ is bounded.
Therefore, choosing $\varepsilon>0$ small enough, we obtain the desired claim.\\

\textbf{Step 2: Isolation of singularities.}
Fix $\mu>0$.
We claim that there exists a set $S\subset\Omega$ such that
\begin{itemize}
\item[(i)] $S$ is piecewise $C^2$;
\item[(ii)] $\partial L\setminus S$ is a pairwise disjoint union of finitely many $C^1$ surfaces compactly contained in $\Omega$;
\item[(iii)] $\partial S$ is orthogonal to $\partial L$, in the sense that $\nu_{\partial S}(x)\cdot\nu_{\partial L}(x)=0$ for all $x\in\partial L \cap \partial S$;
\item[(iv)] $\hno(\partial L\cap \overline{S})\leq \mu$.
\end{itemize}
Indeed, let $\Sigma$ denote the singular set of $\partial L$ union the set $\partial L \cap\partial\Omega$.
Then, $\Sigma$ is compact, being the intersection of finitely many compact $C^1$-surfaces.
Thus, for any given $r>0$, it is possible to cover $\Sigma$ with the finite union of balls of radius $r$.
We denote such a finite union by $S$.
Then, requirements (i) and (ii) above hold directly, while (iv) is in force provided $r>0$ is chosen small enough.
Finally, a local modification of $\partial S$ ensures that also the requirement in (iii) is satisfied.\\

\textbf{Step 3: Approximation of the contact set.}
Fix $\mu>0$.
We claim that there exists a set $C\subset \overline{L}\setminus S$ with the following properties:
\begin{itemize}
\item[(1)] $C$ is piecewise $C^2$;
\item[(2)] It holds that
\[
\hno\left( \left[\partial L \cap \bigcup_{i=1}^k \partial\Omega_i \right] \triangle \left[ \partial L \cap C \right] \right) \leq\mu;
\]
\item[(3)] $C\cap \partial \Omega_i$ is relatively open in each $\partial \Omega_i$.
\end{itemize}
In order to prove our claim, we reason as follows.
Fix $i\in\{1,\dots,k\}$.
We first notice that the sets
\[
C_i^+\coloneqq \{ x\in \partial L \cap \partial\Omega_i\setminus S : \nu_L(x)=\nu_{\partial\Omega_i}(x) \},\quad
C_i^-\coloneqq \{ x\in \partial L \cap \partial\Omega_i\setminus S : \nu_L(x)=-\nu_{\partial\Omega_i}(x) \}
\]
are compact, because $\partial L$ and $\partial\Omega_i$ are, and $\nu_L$ and $\nu_{\partial\Omega_i}$ are continuous outside of $S$.
Let 
\[
d \coloneqq \min_{i=1,\dots,k} \mathrm{dist}(C^+_i, C^-_i).
\]
Note that $d>0$.
Using this fact, together with the outer regularity of the Radon measure $\hno\restr\partial\Omega_i$, it is possible to find disjoint sets $P_i, N_i\subset\partial\Omega_i$ relatively open in $\partial\Omega_i$ such that
\begin{equation}\label{eq:C_close}
\hno(C^+_i \triangle P_i) + \hno(C^-_i \triangle N_i) < \mu.
\end{equation}
Without loss of generality, using an approximation argument, we can also ensure that $\partial P_i$ and $\partial N_i$ are sets with $C^1$ boundary with
\begin{equation}\label{eq:boundary_Pi_Ci}
\mathcal{H}^{N-2}(\partial P_i)<\infty, \quad\quad\quad
\mathcal{H}^{N-2}(\partial N_i)<\infty.
\end{equation}
We define
\[
C\coloneqq \bigcup_{i=1}^k P_i\cup N_i.
\]
By its very definition and by using \eqref{eq:C_close}, we see that the set $C$ satisfies the required properties.\\

\textbf{Step 4: The approximating set.}
Fix $\mu>0$.
First of all, we note that a standard result for the trace of functions of bounded variation (see \cite[Theorem 2.11]{Giusti}) yields the existence of infinitesimal sequences $\{s^i_n\}_n$ and $\{t^i_n\}_n$ with the following properties:
\begin{equation}\label{eq:hno_Pi}
\left|\, \lim_{n\to\infty} \hno\left( \{ x - s^i_n\nu_L(x) : x\in P_i \} \cap L \right) -  \hno(P_i) \,\right| \leq\mu,
\end{equation}
\begin{equation}\label{eq:hno_Ni}
\left|\, \lim_{n\to\infty} \hno\left( \{ x + s^i_n\nu_L(x) : x\in N_i \} \cap L \right) - \hno(N_i) \,\right| \leq\mu.
\end{equation}
Using the fact that, for each $i=1,\dots, k$, the sets $\partial\Omega_i\setminus S$ is made of finitely many closed surfaces of class $C^2$, it is possible to find $\tau>0$ such that their $\tau$-normal tubular neighborhoods are pairwise disjoint and compactly contained in $\Omega$.
Without loss of generality, we can assume that
\begin{equation}\label{eq:tau}
s_n^i, t_n^i < \tau,
\end{equation}
for all $i=1,\dots, k$, and all $n\in\N$.
We fix an index $n\in\N$ that will be chosen later.
Let $\delta>0$ be such that
\[
\delta \leq \frac{1}{3} \min\left\{ d,  \min\{ s^i_n, t^i_n : i=1,\dots,k \} \right\}.
\]
Let $L_\delta$ be a smooth approximation of $L$ such that its Hausdorff distance from $L$ is less than $\delta$.
Without loss of generality, we can also assume that
\[
\hno\left(\partial L_\delta \cap \bigcup_{i=1}^k \partial\Omega_i\right) = 0.
\]
Define the set
\[
F \coloneqq L_\delta\cap\Omega \, \cup \,
    \bigcup_{i=1}^k P_i(s^i_n)
    \cup \,
    \bigcup_{i=1}^k N_i(t^i_n)
    \cup S,
\]
where, for each $i\in\{1,\dots,k\}$, we set
\[
P_i(s^i_n)\coloneqq \left\{ x - s\nu_L(x) : x\in P_i, s\in[0, s^i_n]  \right\},\quad\quad
N_i(t^i_n)\coloneqq \left\{ x + s\nu_L(x) : x\in N_i, s\in[0, t^i_n]  \right\}
\]
Note that, by construction, the set $F$ is piecewise of class $C^2$.
Moreover, it holds that
\begin{equation}\label{eq:boundary_F}
\hno\left( (\partial F\cap \partial\Omega_i) \triangle (P_i\cup N_i) \right)=0.
\end{equation}
for all $i=1,\dots,k$.\\

\textbf{Step 5: Estimates.}
First of all, we notice that, by taking $\tau$ and $\delta$ sufficiently small, we can ensure that
\[
|E\triangle F|\leq \mu.
\]
In order to prove the energy estimate, we reason as follows.
We write
\[
\partial^* F = F_r \cup F_s \cup F_a,
\]
as a pairwise disjoint union, where
\[
B_s\coloneqq \partial^* F \cap \overline{S},
\]
\[
B_r\coloneqq \partial^*F\cap \overline{R},\quad\quad\quad
    R\coloneqq \bigcup_{i=1}^k P_i(s^i_n)
    \cup \,
    \bigcup_{i=1}^k N_i(t^i_n)
    \setminus \overline{S},
\]
and
\[
B_o\coloneqq \partial^*F \setminus(L_a\cup L_s).
\]
First of all, using the continuity of the function $g$ in $\Omega\setminus \cup_{i=1}^k \Omega_i$, we get that
\[
\left| \int_{B_o} g(x)\dd\hno(x)
    -  \int_{\partial L\setminus(\overline{S}\cup\overline{R})} g(x)\dd\hno(x) \right| \leq \frac{\lambda}{3},
\]
provided $\delta>0$ is chosen small enough.
Now, using (iv) of Step 2 together with the boundness of the function $g$ and \eqref{eq:boundary_F}, we get that
\[
\left| \int_{B_s} g(x)\dd\hno(x)
    -  \int_{\partial L\cap \overline{S}} g(x)\dd\hno(x) \right| \leq \frac{\lambda}{3},
\]
since it is possible to choose $\delta>0$ small enough so that
\[
\hno(\partial^* F\cap \overline{S})\leq \mu.
\]
Finally, we see that
\begin{align}\label{eq:est_rectangles}
\Bigl| \int_{B_r} g(x)\dd\hno(x)
   & -  \int_{\partial L\cap \overline{R}} g(x)\dd\hno(x) \Bigr|
    \leq
        \sum_{i=1}^k\left| \int_{(B_r\cap\partial\Omega_i) \triangle (\partial L\cap\partial\Omega_i)} g(x)\dd\hno(x)  \right| \nonumber \\
&\hspace{3cm} + C \sum_{i=1}^k \left( s_n^i \mathcal{H}^{N-2}(\partial P_i) \right)
    + C \sum_{i=1}^k \left( t_n^i \mathcal{H}^{N-2}(\partial N_i) \right)
 \nonumber \\
&\hspace{3cm} + C \hno(\partial^*E \cap R) + C\hno(B_r\cap R),
\end{align}
where we used the boundness of the function $g$.
Now, using \eqref{eq:C_close}, \eqref{eq:hno_Pi}, and \eqref{eq:hno_Ni}, we can choose $\mu>0$ small enough so that
\[
\sum_{i=1}^k\left| \int_{(B_r\cap\partial\Omega_i) \triangle (\partial L\cap\partial\Omega_i)} g(x)\dd\hno(x)  \right| \leq \frac{\lambda}{18}.
\]
Moreover, noting that once $\mu>0$ is chosen, then the sets $P_i$ and $N_i$ are also chose, using \eqref{eq:boundary_Pi_Ci} and \eqref{eq:tau}, it is possible to choose $\tau>0$ small enough so that
\[
C \sum_{i=1}^k \left( s_n^i \mathcal{H}^{N-2}(\partial P_i) \right)
    + C \sum_{i=1}^k \left( t_n^i \mathcal{H}^{N-2}(\partial N_i) \right)
    \leq \frac{\lambda}{18}.
\]
Finally, we note that thanks to \eqref{eq:hno_Pi}, and \eqref{eq:hno_Ni}, we have that, up to choosing $\tau>0$ small enough, we can ensure that
\[
C \hno(\partial^*E \cap R) + C \hno(B_r\cap R) \leq\frac{\lambda}{18}.
\]
This proves the desired estimate and concludes the proof.
\end{proof}


\section{Compactness}\label{sec:compactness}

This section is devoted to investigate the compactness of sequences with uniformly bounded energy.

\begin{proof}[Proof of Theorem \ref{thm:compactness}]
    The main idea of this proof is to apply a transformation to the wells such that they become fixed, then obtain the result for this case, and then apply the inverse transformation.
    Recalling Remark \ref{rem:T_symmetric}, we have that $b(x) = - a(x)$, and we choose $a(x)^\perp \in \R^{M \times (M-1)}$ such that 
    \[
        R_a(x) := \frac{1}{|a(x)|}(a(x), a(x)^\perp).
    \]
    is a frame and in $BV(\Omega;\R^{M \times M})$. Note that this is always well-defined thanks to assumption \ref{H3}.\\
    Next, we recall the definition of the adjustment $T_a(x)$,
    \[
        T_a(x) := |a(x)|R_a.
    \]
    Note that, since $a \in BV (\Omega; \R^M) \cap L^\infty (\Omega; \R^M)$ and $x\mapsto |a(x)|$ is bounded away from zero, it holds that
    \begin{equation}\label{eq:gradient_Ta}
        | \nabla T_a | \leq C | \nabla a|, \qquad | \nabla T_a^{-1} | \leq C | \nabla a|.
    \end{equation}
    We start with showing that $u_n$ is uniformly bounded in $L^1$ and equiintegrable. Indeed, using \ref{H6}, we choose a constant $C_1 > C_2$ such that 
    \[
        \int_{|u_n| > C_1} |u_n| \, \mathrm{d}x \leq C_2 \int_\Omega W(x, u_n) \, \mathrm{d}x \leq C_2C\eps_n,
    \]
    which implies equiintegrability. Using again \ref{H6} we also have 
    \begin{align*}
        \int_\Omega |u_n|\, \dd x &= \int_{|u_n|\leq C_1}|u_n| \, \dd x + \int_{|u_n| > C_1}|u_n| \, \dd x \\
        &\leq C_1|\Omega| + C_2 \int_\Omega W(x, u_n) \, \dd x\\
        &\leq C_1 | \Omega| + C_2 C \eps_n,
    \end{align*}
    which implies uniform boundness.\\
    Since $a \in L^\infty(\Omega; \R^M)$, we deduce $T_a, T_a^{-1} \in L^\infty(\Omega;\R^M)$. This, in particular, implies that 
    $$ v_n \coloneqq T_a^{-1} u_n, $$
    is uniformly bounded in $L^1$ and equiintegrable, since $u_n$ is uniformly bounded in $L^1$ and equiintegrable. It therefore generates a suitable Young measure $\nu_x$ (cf.\ Theorem \ref{thm:compactness_YM}), and $v_n \rightharpoonup v_0$ in $L^1(\Omega; \R^M)$.
    
    We now notice that 
    \[
        CW(x,u_n) \geq Cf(|a(x)|\min \{|v_n - e_1|, \, |v_n + e_1| \}) \geq f(\delta \min \{|v_n - e_1|, \, |v_n + e_1| \}) =: \widetilde W(v_n).
    \]
    Since $\widetilde{W}$ is continuous, we infer 
    \[
        \spt \nu_x \subset \{\pm e_1 \},
    \]
    which implies 
    \[
        \nu_x = \theta(x) \delta_{e_1} + (1-\theta(x)) \delta_{-e_1},
    \]
    for a measurable function $\theta: \Omega \to [0,1]$.
    
    Now, we consider the geodesic distance function
    \[
        \dd_{\tilde W}(p,q) \coloneqq \inf \left\{ \int_{-1}^1 2 \min\left\{\sqrt{\widetilde W(\varphi)}, K \right\}|\varphi'| \, \mathrm{d}s : \varphi \in W^{1,1}([-1,1], \R^M), \, \varphi(-1) = p,\, \varphi(1) = q \right\}.
    \]
    for a suitably large constant $K > 0$. Thanks to Lemma \ref{lemma:geodesic_distance_is_locally_lipschitz}, this distance is globally Lipschitz continuous and its Lipschitz constant is bounded by 
    \[
        |\nabla \dd_{\tilde W}| \leq C(1 + 2K).
    \]
    We define, for $L>0$, the component-wise truncation map $\mathcal{T}_L:\R^M\to\R^M$, and set
    \[
    w_n(x) \coloneqq \mathcal{T}_L v_n(x).
    \]
    Note that
    \begin{equation}\label{eq:w_n_restricted_to_Q_L}
        |D w_n| \leq |D v_n|\restr Q_L,
    \end{equation}
    where $Q_L\subset\R^M$ is the cube centered at the origin and with side length $L$, and therefore is a $BV$ function.
    Moreover, by choosing $L$ large enough, $w_n$ and $v_n$ generate the same Young measure.
    
    Let us define 
    $$ z_n(x) \coloneqq \dd_{\tilde W}(-e_1, w_n(x)), $$
    as the geodesic distance from a well in the transformed space.
    Since $w_n \in BV(\Omega; \R^M)$ and the geodesic distance belongs to $\mathrm{Lip}(\R^M \times \R^M; \R)$, by \cite[Proposition 3.69(c)]{AFP} we get that $z_n \in BV(\Omega; \R)$.
    Also, since $w_n$ is given by
    $$ w_n = \mathcal{T}_L T_a^{-1} u_n, $$
    it follows that 
    \begin{equation}\label{eq:estimate_jump_sets_Cantor}
        J_{w_n} \subseteq J_{T_a^{-1}} = J_a, \qquad | D^c w_n | \leq C |D^c a|.
    \end{equation}
    We now want to estimate $| D z_n | (\Omega)$. Therefore, using the chain rule in $BV$ (see \cite[Theorem 3.101]{AFP}), together with \eqref{eq:gradient_Ta} and \eqref{eq:estimate_jump_sets_Cantor} we get
    \begin{align*}
        |D z_n|(\Omega) &\leq \int_\Omega |\nabla \dd_{\tilde W}(-e_1, w_n)||\nabla w_n| \, \mathrm{d}x + C \H^{d-1}(J_{w_n} \cap \Omega) + C |D^c w_n|(\Omega) \\
        &\leq C \left( \int_\Omega 2\sqrt{\widetilde{W} (w_n)}|\nabla w_n| \, \mathrm{d}x + \H^{N-1}(J_a \cap \Omega) + |D^c a|(\Omega) \right).
    \end{align*}
    Since $\widetilde{W}(v_n)$ is bounded above by $C W(x,u_n)$, and $w_n$ is the component-wise truncation of $v_n$, using \eqref{eq:w_n_restricted_to_Q_L} we get
    \begin{align*}
        \int_\Omega 2 \sqrt{\widetilde{W}(w_n)} |\nabla w_n| \, \dd x &\leq \int_\Omega 2 \sqrt{W(x,u_n)} |T_a^{-1}| | \nabla u_n| \, \mathrm{d} x + \int_{Q_L} 2 \sqrt{\widetilde{W}(w_n)} | u_n | | \nabla T_a^{-1}| \, \mathrm{d} x \\
        &\leq \frac{1}{\delta} \mathcal{F}_{\eps_n} (u_n) + C_L \| a \|_\infty | D a|(\Omega).
    \end{align*}
    In particular, this implies that for sequences $\{ u_n \}_n$ uniformly bounded in energy, the total variation of $z_n$ is uniformly bounded. Then, up to a subsequence (without relabelling), we have $z_n \overset{*}{\rightharpoonup} z$ in $BV(\Omega; \R)$. By the Rellich-Kondrachov $BV$ compactness Theorem we know that, up to a subsequence, the convergence is also strongly in $L^1$.\\
    
    Since $w_n$ is equiintegrable and $\dd_{\tilde W}(-e_1, \cdot)$ is a Lipschitz function, we infer that $z_n$ is also equiintegrable. Therefore, it generates a Young measure $\eta_x$ and, by the strong convergence in $L^1$, we derive $\eta_x = \delta_{z(x)}$. 
    
    Now, notice that we can test with $\varphi \in C_0(\Omega)$ and $\psi \in C_0(\R^N)$ to derive 
    \begin{align*}
        \int_{\Omega} \int_{\R} \varphi(x)\psi(y) \, \dd\eta_x(y) \, \mathrm{d}x &= \int_{\Omega} \varphi(x) \psi(z(x)) \, \mathrm{d}x \\
        &= \lim_{n \to \infty}\int_\Omega \varphi(x)\psi(z_n(x)) \, \mathrm{d}x \\
        &= \int_\Omega \int_{\R^N} \varphi(x) \psi(\dd_{\tilde W}(-e_1, p)) \dd \nu_x(p) \dd x \\
        &= \int_\Omega \varphi(x) (\theta(x) \psi(\dd_{\tilde W}(-e_1, e_1)) + (1-\theta(x))\psi(0)) \, \dd x.
    \end{align*}
    From the arbitrariness of $\varphi$ we infer 
    \begin{align*}
        \langle \eta_x, \psi \rangle = \int_\Omega \psi(z(x)) \, \mathrm{d}x &= \int_\Omega \theta(x) \psi(\dd_{\tilde W}(-e_1, e_1)) + (1-\theta(x))\psi(0) \, \dd x \\
        &= \left\langle \theta(x)\delta_{\dd_{\tilde W}(-e_1, e_1)} + (1 - \theta(x)\delta_0), \psi \right\rangle,
    \end{align*}
    which holds pointwise for almost every $x \in \Omega$. This in turn implies, by duality, that
    \[
        \delta_{z(x)} = \eta_x = \theta(x)\delta_{\dd_{\tilde W}(-e_1, e_1)} + (1 - \theta(x)\delta_0),
    \]
    for almost every $x \in \Omega$. 
    We infer $\theta(x) \in \{0, 1\}$. Since $z \in BV(\Omega; \R)$ we derive that $\theta(x) = \chi_E$ for a set of finite perimeter $E$. 
    
    Now, taking an arbitrary $\varphi \in C(\Omega)$, we can derive 
    \begin{align*}
        \int_\Omega w(x) \varphi(x) \, \mathrm{d}x&= \lim_{n \to \infty} \int_\Omega w_n(x) \varphi(x) \, \mathrm{d}x  \\
        &= \int_\Omega \varphi(x) \int_{\R^N} p \, \dd \nu_x(p) \, \dd x \\
        &= \int_\Omega \varphi(x) (\chi_E(x)e_1 + (1 - \chi_E(x) (-e_1)) \, \dd x.
    \end{align*}
    Consequently, 
    \[
        w(x)= \chi_E(x)e_1 + (1 - \chi_E(x)) (-e_1).
    \]
    Since the same argument also applies for $v$, we infer $v = w$.
    Furthermore, we have for some $q \in (1, \infty)$ that
    \[
        \lim_{n \to \infty} \int_\Omega |w_n|^q \, \mathrm{d}x = \int_{\Omega} \int_{\R^N}|p|^q \, \dd\nu_x(p) \, \mathrm{d}x = |\Omega|.
    \]
    Since $w_n$ is uniformly bounded in $L^\infty$, it also converges weakly to some $w \in L^q$. Thanks to the reflexivity of $L^q$, since we have convergence of the norms and weak convergence of $w_n$ to $w$, we know that $w_n$ converges strongly to $w$ in $L^q$, and therefore in $L^1$. 
    
    Since $v_n$ is equiintegrable and $w_n$ is its truncation at a suitable high value, we know that $v_n$ also converges strongly. Now, since $T_a \in L^\infty(\Omega;\R^{M \times M})$ we can finally derive that
    \[
        \lim_{n \to \infty} u_n = \lim_{n \to \infty} T_a v_n = T_a v =  \chi_E(x)a(x) + (1 - \chi_E(x)) (-a(x)),
    \]
    in $L^1$, and we conclude.
\end{proof}


\section{Liminf inequality}\label{sec:liminf}

The aim of this section is to prove the $\liminf$ inequality of our $\Gamma$-limit result (see Theorem \ref{thm:Gamma_conv}).
We are able to prove it under a weaker assumption on the regularity of the partition of $\Omega$, compared to \ref{H6}.
    
\begin{theorem}\label{thm:liminf}
    Assume that \ref{H1}-\ref{H5}, and that \ref{H7}-\ref{H8} hold.
    Moreover, assume that there exists a Caccioppoli partition $\Omega_1,\dots,\Omega_k$ of $\Omega$, such that
            \[
            W(x,u) = \sum_{i=1}^k W_i(x,u)\ca_{\Omega_i}(x),
            \]
            \[
            a(x) = \sum_{i=1}^k a_i(x)\ca_{\Omega_i}(x),\quad\quad\quad\quad
            b(x) = \sum_{i=1}^k b_i(x)\ca_{\Omega_i}(x),
            \]
            where $W_i\in C^0(\overline{\Omega}\times\R^M)$ and $a_i, b_i\in W^{1,2}(\Omega;\R^M) \cap C^0(\overline{\Omega};\R^M)$.

    Let $u \in L^1(\Omega;\R^M)$ and $\{u_n\}_n \subset H^1(\Omega;\R^M)$ such that $u_n \to u$ in $L^1(\Omega;\R^M)$. Then, we have
    \[
        \mathcal{F}(u) \leq \liminf_{n \to \infty} \mathcal{F}_{\eps_n}(u_n),
    \]
    for every sequence $\eps_n \to 0$.
\end{theorem}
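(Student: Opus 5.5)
We use the blow-up method of Fonseca and M\"uller. Assume $\liminf_n \mathcal{F}_{\eps_n}(u_n)<\infty$ and pass to a subsequence realizing the liminf. By Theorem \ref{thm:compactness}, $u\in BV(\Omega;\{a,b\})$. Consider the energy measures
\[
\mu_n \coloneqq \Bigl(\tfrac{1}{\eps_n}W(x,u_n)+\eps_n|\nabla u_n|^2\Bigr)\mathcal{L}^N\restr\Omega .
\]
Up to a further subsequence, $\mu_n\overset{*}{\rightharpoonup}\mu$ for some finite Radon measure $\mu$. It is enough to prove that for $\hno$-a.e. $x_0\in J_u$
\[
\frac{\dd\mu}{\dd\hno\restr J_u}(x_0)\geq \dd_W(x_0;u^-(x_0),u^+(x_0)).
\]
We select $x_0$ with the following properties. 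The Radon--Nikodym derivative exists and is finite. $x_0$ is a jump point of $u$ with normal $\nu$, and the blow-ups $u(x_0+r\,\cdot)$ converge in $L^1(Q_\nu)$ to the step function taking the values $u^\pm(x_0)$. $x_0$ is a Lebesgue point (or a one-sided approximate limit point) of the indicators of the $\Omega_i$; at most two indices meet $x_0$, say $i$ and $j$, with $i=j$ allowed. The traces satisfy $u^-(x_0)\in\{a_i(x_0),b_i(x_0)\}$ and $u^+(x_0)\in\{a_j(x_0),b_j(x_0)\}$. A diagonal argument on the cubes $Q_\nu(x_0,r_k)$ with $\mu(\partial Q)=0$ then gives rescaled functions $v_k(y)=u_{n_k}(x_0+r_ky)$ with $\eps_{n_k}/r_k\to0$, $v_k\to u_0$ in $L^1(Q_\nu)$, and
\[
\frac{\dd\mu}{\dd\hno}(x_0)=\lim_k\int_{Q_\nu}\Bigl(\tfrac{r_k}{\eps_{n_k}}W(x_0+r_ky,v_k)+\tfrac{\eps_{n_k}}{r_k}|\nabla v_k|^2\Bigr)\dd y .
\]

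\textbf{Step 1: freeze the space variable.} This is where we use \ref{H7} and the adjustment $T$. Write $W(x,u)=W(x,T(x,T(x)^{-1}u))$. Then \ref{H7} gives
\[
W(x,u)\geq (1+\omega(|x-x_0|))^{-1}\,W\bigl(x_0,T(x_0,T(x)^{-1}u)\bigr)
\]
inside each phase $\Omega_i$, where the frozen potential is $W_i(x_0,\cdot)$ on the side of $\Omega_i$. The cost is that the function changes from $v_k$ to $\tilde v_k(y)\coloneqq T_i(x_0)T_i(x_0+r_ky)^{-1}v_k(y)$, and its gradient changes accordingly. Using $\nabla T_{a_i}\in L^2$ from \eqref{eq:T_a_properties}, together with boundedness (after a harmless truncation justified by \ref{H5} and Corollary \ref{corol:truncation_constant}), the extra gradient term has size $\frac{\eps_{n_k}}{r_k}\,r_k^2\!\int_{Q}|\nabla a_i(x_0+r_k\cdot)|^2$. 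This equals $\eps_{n_k} r_k^{1-N}\int_{Q(x_0,r_k)}|\nabla a_i|^2$, which we force to be negligible by choosing $x_0$ outside the set where $\limsup_r r^{1-N}\int_{Q(x_0,r)}|\nabla a_i|^2=\infty$; that set is $\hno$-null by a standard density estimate, and the factor $\eps_{n_k}$ helps further. Continuity of $a_i$ ensures that $\tilde v_k\to u_0$ still holds.

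\textbf{Step 2: the one-dimensional lower bound.} We now have a frozen, piecewise-in-$y$ potential $W_{x_0}$, equal to $W_i(x_0,\cdot)$ on $\{y\cdot\nu<0\}$ and to $W_j(x_0,\cdot)$ on $\{y\cdot\nu>0\}$ up to sets of vanishing measure. Apply Young's inequality $\frac{1}{\eta}W+\eta|\nabla v|^2\geq 2\sqrt W|\partial_\nu v|$ and slice along $\nu$. On a.e. slice the integral dominates $\dd_{W_i(x_0)}(v(t_-),v(s))+\dd_{W_j(x_0)}(v(s),v(t_+))$, where $s$ is the crossing point of the interface. This is at least the adapted distance $\inf_r(\dd_{W_i}(p,r)+\dd_{W_j}(r,q))$. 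The endpoint values $v(t_\pm)$ are chosen close to $u^\pm(x_0)$, which is possible for most slices by $L^1$ convergence and the smallness of $W$. Lemma \ref{lemma:adapted_geodesic_distance_is_locally_lipschitz} controls the error from these endpoint approximations, and Fatou's lemma over the slices gives the bound.

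\textbf{Main obstacle.} The hardest part is the interface case $i\neq j$ with a Caccioppoli (non-smooth) partition. The blow-up of $\Omega_i$ at $x_0$ must be a half-space with normal matching $\nu$, and we must also identify the adapted distance with the density $\dd_W(x_0;u^-,u^+)$ in $\mathcal{F}_\infty$, understood with the lower semicontinuous envelope of the jumping potential. If $\nu_{\Omega_i}\neq\pm\nu$ at $x_0$, the rescaled partition cuts the cube transversally. We will handle this by noting that $J_u\cap\partial^*\Omega_i$ has normals agreeing $\hno$-a.e. with those of $\partial^*\Omega_i$, by locality of rectifiable sets. Off $\bigcup_i\partial^*\Omega_i$, $x_0$ is an interior density point of a single phase, and Step 2 applies with a single frozen potential.
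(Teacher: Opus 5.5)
Your route is the paper's: Fonseca--M\"uller blow-up at $\hno$-a.e.\ $x_0\in J_u$, truncation via Corollary \ref{corol:truncation_constant}, freezing the potential phase by phase with \ref{H7} through the adjustment $T$, Young's inequality and slicing along $\nu$, and a lower bound by the adapted distance, with endpoint errors controlled by local Lipschitz continuity. Two of your choices differ from the paper and are fine. You choose endpoints on the slices of a diagonal sequence instead of using the paper's traces on the faces of $Q(\rho_m)$ with iterated limits in $n$ and $m$. You also use the $\hno$-a.e.\ structure of Caccioppoli partitions to get at most two phases at $x_0$, with normal aligned to $\nu_u$.

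\textbf{The gap is in Step 2.} You claim that a.e.\ slice crosses the interface at a single point $s$, with $W_i(x_0,\cdot)$ below and $W_j(x_0,\cdot)$ above. But what you impose on $x_0$ (approximate limits of $\ca_{\Omega_l}$, $L^1$ convergence of the rescaled phases to half-spaces, alignment of normals) controls the phases only up to sets of small measure. That is not enough, because the rescaled energy densities are not equi-integrable. They concentrate on layers of width of order $\varepsilon_{n_k}/r_k$, so an island of a wrong phase with vanishing volume can host the whole transition on a non-negligible set of slices. On such slices you only get an alternating chain $\dd_{W_i}(x_0;p,r_1)+\dd_{W_j}(x_0;r_1,r_2)+\dd_{W_i}(x_0;r_2,r_3)+\cdots$, or the potential of a third phase. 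This can be strictly smaller than $\inf_r\bigl(\dd_{W_i}(x_0;p,r)+\dd_{W_j}(x_0;r,q)\bigr)$. The same objection applies to your single-phase case: density one of $\Omega_l$ at $x_0$ does not keep slices away from thin pieces of other phases.

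\textbf{What is missing is a crossing count}, the analogue of the paper's Step 1, where the coarea formula is used to extract a large set $M_m$ of good slices.
\begin{itemize}
\item Select $x_0$ so that also $r^{1-N}\hno(\partial^*\Omega_l\cap Q_\nu(x_0,r))\to1$ for $l\in\{i,j\}$ when $i\neq j$, and $\to0$ for every other $l$. This holds $\hno$-a.e.\ on $J_u$ by De Giorgi's theorem and the density theorem for $\hno\restr\partial^*\Omega_l$.
\item By the coarea formula, the mean number of crossings of $\partial^*\Omega_l$ per rescaled slice is at most this ratio.
\item $L^1$ convergence to the half-space forces all but a vanishing fraction of slices to cross $\partial^*\Omega_i$ at least once. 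Since the mean tends to $1$, all but a vanishing fraction cross exactly once, and again by $L^1$ convergence that crossing is near $0$ with $\Omega_i$ below. Almost no slice meets a third phase.
\item Dropping the bad slices costs only a factor close to $1$, and your Step 2 then goes through.
\end{itemize}
Counting crossings of $\partial^*\Omega_l$ directly is in fact sturdier than the paper's count of jump points of $u$, which detects a phase change only where the wells differ.

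\textbf{Two smaller points.} First, with phase-dependent adjustments $\tilde v_k$ jumps at $s$: its one-sided values are $T_{a_i}(x_0)T_{a_i}(x)^{-1}v_k$ and $T_{a_j}(x_0)T_{a_j}(x)^{-1}v_k$, so the slice bound involves two intermediate points. After truncation their difference tends to $0$ uniformly and is absorbed by the Lipschitz bound; the paper handles this with $L^{i,m}_{x'}$, $L^{j,m}_{x'}$ and Lemma \ref{lemma:technical}. Second, the adapted distance is in general larger than, not equal to, the distance built on $\min\{W_i(x_0,\cdot),W_j(x_0,\cdot)\}$. The paper takes the adapted distance itself as the interface density, and your bound implies the envelope version anyway.
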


\begin{proof}
We divide the proof in steps.\\[0.5em]
    \textbf{Step 0: The setup.}
    Let $\delta > 0$ be arbitrary. By passing to a subsequence (not relabeled), we may assume 
    \begin{align}\label{eq:bounded_radon}
        \lim_{n\to \infty} \mathcal{F}_{\eps_n}(u_n) = \liminf_{n \to \infty} \mathcal{F}_{\eps_n}(u_n) < \infty.
    \end{align}
    We apply the blow-up method of Fonseca and M\"uller \cite{fonsecamueller1999}. Define the Radon measures 
    \[
        \mu_n(B) := \mathcal{F}_{\eps_n}(u_n, B)
    \]
    for measurable sets $B \subset \Omega$.
    By \eqref{eq:bounded_radon}, $\{\mu_n\}_n$ is uniformly bounded in $\mathcal{M}_b(\Omega)$, the space of Radon measures in $\Omega$. Thus, there exists $\mu \in \mathcal{M}_b(\Omega)$ such that $\mu_n \rightharpoonup^* \mu$ in $\mathcal{M}_b(\Omega)$ (after extracting a subsequence). Consider now $\lambda := \H^{N-1}|_{J_u}$ and observe that, for $\H^{N-1}$-a.e. $x_0 \in J_u$,
    \[
        \frac{d\mu}{d\lambda}(x_0) < \infty \qquad \text{and} \qquad \frac{d\H^{N-1}}{d\lambda}(x_0) = 1.
    \]
    By the Besicovitch differentiation theorem, we also know that for $\H^{N-1}$-a.e. $x_0 \in J_u$ we have 
    \[
        \frac{d\mu}{d\lambda}(x_0) = \lim_{\rho \to 0^+} \frac{\mu(Q(x_0, \rho))}{\lambda(Q(x_0, \rho))} = \lim_{\rho \to 0^+} \frac{\mu(Q(x_0, \rho))}{\rho^{N-1}},
    \]
    where $Q(x_0, \rho)$ are cubes with side length $\rho > 0$ with two faces orthogonal to $\nu(x_0)$. Since the set of $\rho$ such that $\mu(\partial Q(x_0, \rho)) > 0$ is at most countable, it follows that
    \[
        \frac{\mu(Q(x_0, \rho))}{\rho^{N-1}} = \lim_{n \to \infty} \frac{\mu_n(Q(x_0, \rho))}{\rho^{N-1}},
    \]
    for $\rho \in E$ where $\H^{1}((0,\infty)\setminus E) = 0$. Without loss of generality, we can assume $x_0 = 0$, $\nu(x_0) = e_N$. For $\rho > 0$, we set
    \begin{align*}
        &Q(\rho) := (-\rho/2, \rho/2)^N, \quad Q^+(\rho):= (0, \rho/2)^N,   \\
        &Q^-(\rho) := (-\rho/2, 0)^N, \quad \text{and} \quad Q'(\rho) := (-\rho/2, \rho/2)^{N-1}.
    \end{align*}
    We also set $Q := Q(1)$ and $Q' := Q'(1)$. Now, by applying Lemma \ref{lemma:technical_approx_limits}, we choose a sequence $\rho_m \to 0$ such that 
    \[
        u_n \xrightarrow{n \to \infty} u
    \]
    in $L^1(\partial Q(\rho_m); \R^M)$ holds, and that for some $i,j = 1,.., k$ (with possibly $i = j$) we have
    \begin{align}\label{eq:conv_1}
        u(\rho_m \cdot , \rho_m/2) \xrightarrow{m \to \infty} u^+(x_0),
    \end{align}   
    and
    \begin{align}\label{eq:conv_2}
        u(\rho_m \cdot, -\rho_m/2) \xrightarrow{m \to \infty} u^-(x_0),
    \end{align}
    in $L^1(Q'; \R^M)$.
    Without loss of generality, we can assume that
    \[
    u^+(x_0)=a_i(x_0),\quad\quad\quad
    u^-(x_0)=a_j(x_0),
    \]
    with $i\neq j$.
    The case $u^+(x_0)=a_i(x_0)$ and $u^-(x_0)=b_j(x_0)$, for some $i,j\in\{1,\dots,k\}$, follows by using a similar argument.
    Furthermore, by the slicing theory for $BV$-functions (cf.\ section 3.11 in \cite{AFP}) we infer that for $\H^{N-1}$-a.e. $x' \in Q'$ we have 
    \[
        u_{x'} := u(\rho_m x', \cdot) \in BV((-\rho_m/2,\rho_m/2), \R^M)
    \]
    with the additional property that 
    $$
        J_{u_{x'}} = P_N(J_u \cap (\{\rho_m x'\} \times (-\rho_m/2,\rho_m/2)))
    $$ where $P_N(x) := x_N$. \\[0.5em]
    \textbf{Step 1: Extraction of a nice set}\\[0.5em]
    We will now argue the existence of a large subset of $Q'(1)$ such that $u_{x'}$ is nicely behaved. By the application of the co-area formula \cite[Theorem 2.93]{AFP} with $E = J_u$ and $f(x) = x'$, we derive
    \[
        \frac{1}{\rho_m^{N-1}} \int_{Q'(\rho_m)} \H^0(J_{u_{x'}}) \, \mathrm{d}x' = \frac{1}{\rho_m^{N-1}} \int_{J_u} |\nu \cdot e_n| \, \mathrm{d}x' \leq \frac{\H^{N-1}(J_u \cap Q(\rho_m))}{\rho_m^{N-1}} \xrightarrow{m \to \infty} 1.
    \]
    Moreover, we notice that if $\H^0(J_{u_{x'}}) = 0$ holds for some $x'$ we have $u_{x'} = a_l(\rho_m x', \cdot)$ for some $l=1,..,k$ by \ref{H4}. We obtain
    \[
        \rho_m \leq C\int_{(0, \rho_m)}| u - a_i(x_0)|\, \mathrm{d}s + \int_{(-\rho_m, 0)}| u - a_j(x_0)|\, \mathrm{d}s
    \]
    where $C$ is not dependent on $x'$. Using the definition of approximate limits, we have for large $m$ 
    \begin{align*}
        \rho_m\H^{N-1}(\{x' \in Q'(\rho_m) : \H^0(J_{u_{x'}}) = 0 \}) & \leq C\left( \int_{Q^+(\rho_m)}| u - a_i(x_0)|\, \mathrm{d}x + \int_{Q^-(\rho_m)}| u - a_j(x_0)|\, \mathrm{d}x \right) \\
        &\leq \delta \rho_m^{N}.
    \end{align*}
    In particular, for large $m \in \N$ there exists a measurable set $M_m \subset Q'(\rho_m)$ such that for all $x' \in M_m$ we have 
    \[
        \H^0(J_{u_{x'}}) = 1
    \]
    and 
    \[
        \H^{N-1}(M_m) \geq \left(1 - \frac{\delta}{2} \right)\rho_m^{N-1}.
    \]
    Since the trace of the slices coincides with the standard trace for $\H^{N-1}$-a.e. $x' \in Q'(\rho_m)$, by \eqref{eq:conv_1} and \eqref{eq:conv_2} we can further assume that
    \begin{align*}
        u_{x'}(\rho_m) \to a_i(x_0) \quad \text{ and } \quad  u_{x'}(-\rho_m) \to a_j(x_0)
    \end{align*}
    for all $x' \in M_m$ (after possibly removing a null set contained in $M_m$). Furthermore, we denote the jump point in $J_{u_{x'}}$ by $(x', h(x'))$. We claim that there exists $M_m'\subset M_m$ with 
    $$\H^{N-1}(M_m') \leq \frac{\delta\rho_m^{N-1}}{2}$$ 
    such that for $x' \in M_m\setminus M_m'$ 
    \begin{align}\label{eq:cond_set_1}
        \H^1(\{ (x', t): t > h(x') \} \cap \Omega_l \}) = 0
    \end{align}
    for all $l \neq i$ and 
    \begin{align}\label{eq:cond_set_2}
        \H^1(\{ (x', t): t < h(x') \} \cap \Omega_l \}) = 0
    \end{align}
    for all $l \neq j$. 
    
    Indeed, suppose there exists a subsequence $(m_\alpha)$ for which you cannot extract such a set $N_\alpha \subset M_{m_\alpha}$ with $|N_\alpha|  \geq \delta \rho_{m_\alpha}$. By rescaling, we can infer the existence of sets $\tilde N_\alpha \subset Q'$ with $\H^{N-1}(\tilde N_\alpha) \geq \delta$ such that for all $x' \in \tilde N_\alpha$
    \begin{align}\label{eq:first_case}
        \H^1\left( \{ (x', t): t > h(x') \} \cap \bigcup_{l \neq i} \Omega_l \right) > 0
    \end{align}
    or 
    \[
        \H^1\left( \{ (x', t): t < h(x') \} \cap \bigcup_{l \neq j} \Omega_l \right) > 0.
    \]
    Without loss of generality, let us assume the first case \eqref{eq:first_case} holds for all $x' \in \tilde N_\alpha$. This, in turn, implies that for some constant $C > 0$ and large $m$ we have $|u_{x'}(\rho_m) - a_i(x_0)| > C$ for all $x' \in N_\alpha$ since $u_{x'}$ only has one jump point, $a_l$ are continuous and $a_l(x_0) \neq a_i(x_0)$ for $l \neq i$. This directly contradicts the $L^1$ convergence in \eqref{eq:conv_1} since 
    \begin{align*}
        u_{x'}(\rho_m) = u(\rho_m/2 x', \rho_m/2) 
    \end{align*}
    for $H^{N-1}$-a.e. $x' \in Q'(\rho_m)$ holds. With abuse of notation, we will denote $M_m\setminus M_m'$ by $M_m$ again and note that
    \begin{align}\label{eq:relative_large_good_set}
        \H^{N-1}(M_m) \geq \left(1 - \delta \right)\rho_m^{N-1}
    \end{align}
    holds. \\[0.5em]
    \textbf{Step 2: The blow-up.} After this technical setup, we now perform the blow-up. We remind ourselves of
    \begin{align}\label{eq:blow_up_property}
        \frac{d\mu}{d\lambda}(x_0) = \lim_{m \to \infty} \lim_{n \to \infty} \frac{\mu_n(Q(x_0, \rho_m))}{\rho_m^{N-1}} =  \lim_{m \to \infty} \lim_{n \to \infty} \frac{1}{\rho_m }\int_{Q(\rho_m)} \frac{1}{\eps_n}W(x, u_n) + \eps_n |\nabla u_n|^2 \, \mathrm{d}x.
    \end{align}
    Now, let $M_0 = M_0(\|a\|_\infty, f, C_G)$ be the constant given by Corollary \ref{corol:truncation_constant}. In the following, the truncation of a vector $p \in \R^M$ by a scalar $L > 0$ is to be understood component-wise, i.e., we set $(p \land L)_i := \max\{ \min\{ p_i, L \}, - L \} $. Using \ref{H6}, we infer the existence of $M > M_0$ and $L > \| a\|_\infty$ such that the relation 
    \begin{align*}
        W(p) \land M = W(p \land L) \land M
    \end{align*}
    holds for all $p \in \R^N$.  Now, we define 
    \begin{align*}
        \widetilde W := W \land L, \, \widetilde W_i := W_i \land L\, \text{ and }\tilde u_n := u_n \land M.
    \end{align*}
    We first observe the straightforward estimate
    \begin{align}\label{eq:liminf_partial_inequality_1}
        &\lim_{m \to \infty} \lim_{n \to \infty} \frac{1}{\rho_m^{N-1} }\int_{Q(\rho_m)} \frac{1}{\eps_n}W(x, u_n) + \eps_n |\nabla u_n|^2 \, \mathrm{d}x  \nonumber \\
        & \hspace{1cm} \geq \liminf_{m \to \infty} \liminf_{n \to \infty} \frac{1}{\rho_m^{N-1} }\int_{Q(\rho_m)} \frac{1}{\eps_n}\widetilde W(x, \tilde u_n) + \eps_n |\nabla \tilde u_n|^2 \, \mathrm{d}x.
    \end{align}
    
    Let $T_a$ be defined as in \eqref{eq:adjustment} with inverse $S_a = T_a^{-1}$. Set $v_n := S_a \tilde u_n$. We observe that \ref{H8} implies
    \begin{align}\label{eq:relative_continuity_cond_bounded}
        |\widetilde W_l(x, T_{a_l}(x)v_n) - \widetilde W_l(x_0, T_{a_l}(x_0) v_n)| \leq \omega(\rho_m)W_l(x, T_{a_l}(x)v_n),
    \end{align}
        
    for all $x \in \Omega$ and $l = 1, .., k$.
    Now, we set $$Q_l(\rho_m) := \Omega_l \cap Q(\rho_m).$$ From \eqref{eq:relative_continuity_cond_bounded}, we infer
    \begin{align}\label{eq:liminf_partial_inequality_2}
        &\liminf_{m \to \infty} \liminf_{n \to \infty} \frac{1}{\rho_m^{N-1}} \int_{Q(\rho_m)} \frac{1}{\eps_n}\widetilde W(x, \tilde u_n(x)) + \eps_n |\nabla \tilde u_n|^2 \, \mathrm{d}x  \nonumber \\
        &\hspace{1 cm} = \liminf_{m \to \infty}  \liminf_{n \to \infty} \frac{1}{\rho_m^{N-1}} \sum_{l = 1}^k \int_{Q_l(\rho_m)} \frac{1}{\eps_n}\widetilde W_l(x, T_{a_l}(x)v_n) + \eps_n |\nabla \tilde u_n|^2 \, \mathrm{d}x \nonumber\\
        &\hspace{1 cm} \geq \liminf_{m \to \infty} \liminf_{n \to \infty} \frac{1}{\rho_m^{N-1}}\sum_{l = 1}^k \int_{Q_l(\rho_m)} \frac{1}{\eps_n}\widetilde W_l(x_0, T_{a_l}(x_0) v_n) + \eps_n |\nabla \tilde u_n|^2 \, \nonumber \\ 
        &\hspace{9 cm}- \omega(\rho_m)\frac{1}{\eps_n} \widetilde W_l(x, T_{a_l}(x)v_n)  \, \mathrm{d}x.
    \end{align}
    Now, we notice that we can estimate the last term  
    \begin{align*}
        \frac{1}{\rho_m^{N-1}}\omega(\rho_m)\frac{1}{\eps_n} \int_{Q_l(\rho_m)} \widetilde W_l(x, T_{a_l}(x)v_n)  \, \mathrm{d}x \leq \frac{\omega(\rho_m)\mu_n(Q(\rho_m))}{\rho_m^{N-1}}.
    \end{align*}
    Therefore, by \eqref{eq:blow_up_property} the term vanishes. Next, we infer 
    \begin{align}\label{eq:liminf_partial_inequality_3}
        &\liminf_{m \to \infty} \liminf_{n \to \infty} \frac{1}{\rho_m^{N-1}} \int_{Q(\rho_m)} \frac{1}{\eps_n}\widetilde W(x, \tilde u_n(x)) + \eps_n |\nabla \tilde u_n|^2 \, \mathrm{d}x  \nonumber \\
        & \hspace{1cm} =  \liminf_{m \to \infty} \liminf_{n \to \infty} \frac{1}{\rho_m^{N-1}}\sum_{l = 1}^k \int_{Q_l(\rho_m)} \frac{1}{\eps_n} \widetilde W_l(x_0, T_{a_l}(x_0) v_n) + \eps_n |\nabla \tilde u_n|^2 \, \mathrm{d}x \nonumber \\
        & \hspace{1cm} \geq  \liminf_{m \to \infty} \liminf_{n \to \infty} \frac{1}{\rho_m^{N-1}}\sum_{l \in \{i,j\}} \int_{Q_l(\rho_m)} \frac{1}{\eps_n} \widetilde W_l(x_0, T_{a_l}(x_0) v_n) + \eps_n |\nabla \tilde u_n|^2 \, \mathrm{d}x.
    \end{align}
    Since $\tilde u_n \in L^{\infty}(\Omega; \R^{M})$ by construction, and $S_a \in L^{\infty}(\Omega; \R^{M \times M})$ by \ref{H4}, we can apply the product rule for the approximate gradients of $BV$-functions. We derive $\nabla v_n = \nabla S_a \tilde u_n + S_a \nabla \tilde u_n \in L^2(\Omega; \R^{M \times N})$. Furthermore, using 
    \[
        |T_a Z| = |a||Z|,
    \]
    for $Z \in \R^{M \times N}$ and Young's inequality ($c,d,\delta > 0$)
    \[
        cd \leq \delta c^2 + \frac 1\delta d^2,
    \]
    we obtain
    \begin{align*}
        \eps_n |\nabla \tilde u_n|^2 &= \eps_n |a|^2|\nabla v_n - \nabla S_a \tilde u_n|^2 \\
        &\geq \eps_n(|a|^2(1 - \delta)|\nabla v_n|^2 - C_\delta|\nabla S_a |^2),
    \end{align*} 
    where $C_\delta > 0$ asymptotically behaves like $\frac1\delta$. We apply this with $w_n := T_a(x_0)v_n$ to infer, for $l \in \{i,j\}$, that 
    \begin{align}\label{eq:liminf_partial_inequality_4}
        &  \frac{1}{\rho_m^{N-1}}\int_{Q_l(\rho_m)} \frac{1}{\eps_n}{\widetilde W}_l(x_0, w_n) + \eps_n |\nabla \tilde u_n|^2 \, \mathrm{d}x  \nonumber \\
        & \hspace{1cm}\geq ~  (1-\delta) \frac{1}{\rho_m^{N-1}}\int_{Q_l(\rho_m)} \frac{1}{\eps_n}{{\widetilde W}_l}(x_0, w_n) + \eps_n |a_l(x)|^2|\nabla v_n|^2  - \eps_n C_\delta |S_a|^2 \, \mathrm{d}x.
    \end{align}
    Here, the last term vanishes as $n \to \infty$. We further estimate that
    \begin{align}\label{eq:liminf_partial_inequality_5}
        &\liminf_{m \to \infty} \liminf_{n \to \infty}\frac{1}{\rho_m^{N-1}}\int_{Q_l(\rho_m)} \frac{1}{\eps_n}{{\widetilde W}_l}(x_0, w_n) + \eps_n |a_l(x)|^2|\nabla v_n|^2 \, \mathrm{d}x \nonumber \\
        & \hspace{1cm} = ~ \liminf_{m \to \infty} \liminf_{n \to \infty} \frac{1}{\rho_m^{N-1}}\int_{Q_l(\rho_m)} \frac{1}{\eps_n}{\widetilde W}_l(x_0, w_n ) + \eps_n \left( \frac{|a_l(x)|}{|a_l(x_0)|} \right)^2|\nabla w_n|^2 \, \mathrm{d}x \nonumber\\
        & \hspace{1cm} \geq ~  \liminf_{m \to \infty} \liminf_{n \to \infty} \frac{1}{\rho_m^{N-1}}\int_{Q_l(\rho_m)}\frac{|a_l(x)|}{|a_l(x_0)|} 2\sqrt{{\widetilde W}_l(x_0, w_n)}|\nabla w_n| \, \mathrm{d}x  \nonumber \\
        & \hspace{1cm} \geq ~ \liminf_{m \to \infty} \liminf_{n \to \infty} \frac{1-\tilde \omega_l(\rho_m)}{\rho_m^{N-1}}\int_{Q_l(\rho_m)} 2\sqrt{{\widetilde W}_l(x_0, w_n)}|\nabla w_n| \, \mathrm{d}x \nonumber \\
        & \hspace{1cm} \geq ~ (1-\delta) \liminf_{m \to \infty} \liminf_{n \to \infty} \frac{1}{\rho_m^{N-1}} \int_{Q_l(\rho_m)} 2\sqrt{{\widetilde W}_l(x_0, w_n)}|\nabla w_n| \, \mathrm{d}x, 
    \end{align}
    where $\tilde \omega_l$ is the modulus of continuity of $x \mapsto |a_l(x)/a_l(x_0)|$ at $x_0$. To summarize \eqref{eq:blow_up_property}, \eqref{eq:liminf_partial_inequality_1}, \eqref{eq:liminf_partial_inequality_2}, \eqref{eq:liminf_partial_inequality_3}, \eqref{eq:liminf_partial_inequality_4} and \eqref{eq:liminf_partial_inequality_5}, at this point we have 
    \begin{align}\label{eq:liminf_partial_result}
        \frac{d\mu}{d\lambda}(x_0) \geq (1-\delta)^2 \liminf_{m \to \infty} \liminf_{n \to \infty} \frac{1}{\rho_m^{N-1}} \sum_{l \in \{i,j\}} \int_{Q_l(\rho_m)} 2\sqrt{{\widetilde W}_l(x_0, w_n)}|\nabla w_n| \, \mathrm{d}x.
    \end{align}
    Since 
    $$|(M_m \times (-\rho_m/2, \rho_m/2)) \setminus (Q_i(\rho_m) \cup Q_j(\rho_m))| = 0$$ 
    by \eqref{eq:cond_set_1} and \eqref{eq:cond_set_2}, we can derive
    \begin{align}\label{eq:liminf_partial_inequality_6}
        &\sum_{l \in \{i,j\}} \int_{Q_l(\rho_m)} \frac{1}{\eps_n} \widetilde W_l(x_0, T_{a_l}(x_0) v_n) + \eps_n |\nabla \tilde u_n|^2 \, \mathrm{d}x \nonumber \\
        & \hspace{1 cm} \geq \int_{M_m} \int_{-1/2}^{h(x')} 2\sqrt{{\widetilde W}_j(x_0, w_n)}|\nabla w_n| \, \mathrm{d}t + \int_{h(x')}^{1/2} 2\sqrt{{\widetilde W}_i(x_0, w_n)}|\nabla w_n| \, \mathrm{d}t \, \mathrm{d}x' \nonumber \\
        & \hspace{1 cm} \geq \int_{M_m} d_{\widetilde W_j}(x_0, w_n(x', -\rho_m/2), w_n^-(x', h(x'))) + d_{\widetilde W_i}(x_0, w_n^+(x', h(x')), w_n(x', \rho_m/2)) \, \mathrm{d}x',
    \end{align}
    where $w_n^+, w_n^-$ are the one-sided trace values at $(x', h(x'))$.
    Now, we observe that 
    \begin{align*}
        w_n(\cdot , -\rho_m/2) = T_{a_j}(x_0)S_{a_j}(\cdot , -\rho_m/2)\tilde u_n(\cdot, -\rho_m/2) \xrightarrow{n \to \infty} T_{a_j}(x_0)S_{a_j}(\cdot , -\rho_m/2)u(\cdot, -\rho_m/2)
    \end{align*}
    in $L^1(Q'(\rho_m))$ (analogously for $w_n(\cdot , \rho_m/2)$). Moreover, we have 
    \begin{align*}
        w_n^-(x', h(x')) = T_{a_j}(x_0)S_{a_j}((x', h(x')) u_n(x', h(x'))
    \end{align*}
    and 
    \begin{align*}
        w_n^+(x', h(x')) = T_{a_i}(x_0)S_{a_i}((x', h(x')) u_n(x', h(x')).
    \end{align*}
    Now, we set 
    \[
        z^-_m(x') := T_{a_j}(x_0)S_{a_j}(x' , -\rho_m/2)u(x', -\rho_m/2),
    \]
    \[
        z^+_m(x') := T_{a_i}(x_0)S_{a_i}(x' , \rho_m/2)u(x', \rho_m/2),
    \]
    \[
        L^{j,m}_{x'} := T_{a_j}(x_0)S_{a_j}((x', h(x')),
    \]
    and 
    \[
        L^{i,m}_{x'} := T_{a_i}(x_0)S_{a_i}((x', h(x')).
    \]
    Since  $d_{W_l}$ is Lipschitz on compact sets $K \subset \R^M$ (cf.\ \ref{lemma:geodesic_distance_is_locally_lipschitz}) we obtain
    \begin{align}\label{eq:liminf_technical_equation_1}
        \inf_{r \in \R^M} d_{\widetilde W_j}(x_0, p, L^{j,m}_{x'}r) + d_{\widetilde W_i}(x_0, L^{i,m}_{x'}r, q) \leq C_K(1 + |p| + |q|). 
    \end{align}
    for any $p, q \in K$. Since the sequence $\{w_n\}$ is uniformly bounded in $L^\infty$ we can apply \eqref{eq:liminf_technical_equation_1} in conjunction with the Dominated Convergence Theorem to infer 
    \begin{align}\label{eq:liminf_partial_inequality_7}
        &\lim_{n \to \infty}\int_{M_m} d_{\widetilde W_j}(x_0, w_n(x', -\rho_m/2), w_n^-(x', h(x'))) + d_{\widetilde W_i}(x_0, w_n^+(x', h(x')), w_n(x', \rho_m/2)) \, \mathrm{d}x' \nonumber \\
        & \hspace{1cm }\geq \lim_{n \to \infty} \int_{M_m} \inf_{r \in \R^M} \left( d_{\widetilde W_j}(x_0, w_n(x', -\rho_m/2), L^{j,m}_{x'}r) + d_{\widetilde W_i}(x_0, L^{i,m}_{x'}r, w(x', \rho_m/2)) \right) \, \mathrm{d}x' \nonumber \\
        & \hspace{1cm} = \int_{M_m} \inf_{r \in \R^M} \left( d_{\widetilde W_j}(x_0, z_m^-(x'), L^{j,m}_{x'}r) + d_{\widetilde W_i}(x_0, L^{i,m}_{x'}r, z_m^+(x')) \right) \, \mathrm{d}x'.
    \end{align}
    Again, using the uniform boundedness of the integrand in $L^\infty$ with \eqref{eq:relative_large_good_set}, we obtain
    \begin{align}\label{eq:liminf_partial_inequality_8}
        &\int_{M_m} \inf_{r \in \R^M} \left( d_{\widetilde W_j}(x_0, z_m^-(x'), L^{j,m}_{x'}r) + d_{\widetilde W_i}(x_0, L^{i,m}_{x'}r, z_m^+(x')) \right) \, \mathrm{d}x' \nonumber \\
        & \hspace{1cm} \geq \int_{Q'(\rho_m)} \inf_{r \in \R^M} \left( d_{\widetilde W_j}(x_0, z_m^-(x'), L^{j,m}_{x'}r) + d_{\widetilde W_i}(x_0, L^{i,m}_{x'}r, z_m^+(x')) \right) \, \mathrm{d}x' - C\delta\rho_m^{N-1}.
    \end{align}
    If we rescale now the integral by $1/\rho_m^{N-1}$, we derive 
    \begin{align}\label{eq:liminf_partial_inequality_9}
        &\frac{1}{\rho_m^{N-1}}\int_{Q'(\rho_m)} \inf_{r \in \R^M} \left( d_{\widetilde W_j}(x_0, z_m^-(x'), L^{j,m}_{x'}r) + d_{\widetilde W_i}(x_0, L^{i,m}_{x'}r, z_m^+(x')) \right) \, \mathrm{d}x \nonumber \\
        & \hspace{1cm} = \int_{Q'} \inf_{r \in \R^M} \left( d_{\widetilde W_j}(x_0, w_m^-(\rho_m x'), L^{j,m}_{\rho_m x'}r) + d_{\widetilde W_i}(x_0, L^{i,m}_{\rho_m x'}r, w_m^+(\rho_m x')) \right) \, \mathrm{d}x. 
    \end{align}

    Now again, since $w_m^\pm$ are uniformly bounded in $L^\infty$, also the $\inf$ is uniformly attained in a large ball (cf.\ Proposition \ref{prop:infimum_attained_in_ball}). Therefore, we notice that the integrand is uniformly bounded in $L^\infty(Q')$. By \eqref{eq:conv_1} and \eqref{eq:conv_2}, and the continuity of $S_{a_l}$ we also obtain
    \[
        w_m^+(\rho_m \cdot ) \xrightarrow{m \to \infty} a_i(x_0) \qquad \text{ and } \qquad  w_m^-(\rho_m \cdot ) \xrightarrow{m \to \infty} a_j(x_0)
    \]
    in $L^1(Q', \R^M)$. In particular, we have pointwise convergence almost everywhere. Now, we also observe that for every $x' \in Q'$ we have 
    \[
        L^{i,m}_{\rho_m x'} \xrightarrow{m \to \infty} Id_{M} \qquad \text{ and } \qquad L^{j,m}_{\rho_m x'} \xrightarrow{m \to \infty} Id_{M}.
    \]
    By applying Lemma \ref{lemma:technical}, for $\H^{N-1}$-a.e. $x' \in Q'$, it follows that
    \begin{align*}
        &\inf_{r \in \R^M} \left( d_{\widetilde W_j}(x_0, w_m^-(\rho_m x'), L^{j,m}_{\rho_m x'}r) + d_{\widetilde W_i}(x_0, L^{i,m}_{\rho_m x'}r, w_m^+(\rho_m x')) \right)  \\
        & \hspace{3cm} \xrightarrow{m \to \infty}  \inf_{r \in \R^M} \left( d_{\widetilde W_j}(x_0, a_j(x_0), r) + d_{\widetilde W_i}(x_0, r, a_i(x_0)) \right).
    \end{align*}
    Applying the dominating convergence theorem, we infer 
    \begin{align}\label{eq:liminf_partial_inequality_10}
        & \lim_{m \to \infty} \int_{Q'} \inf_{r \in \R^M} \left( d_{\widetilde W_j}(x_0, w_m^-(\rho_m x'), L^{j,m}_{\rho_m x'}r) + d_{\widetilde W_i}(x_0, L^{i,m}_{\rho_m x'}r, w_m^+(\rho_m x')) \right) \, \mathrm{d}x' \nonumber \\
        & = \inf_{r \in \R^M} \left(  d_{\widetilde W_j}(x_0, a_j(x_0), r) + d_{\widetilde W_i}(x_0, r, a_i(x_0))\right) \nonumber \\
        & = d_{\widetilde W}(x_0, u^-(x_0), u^+(x_0)). 
    \end{align}
    Gathering \eqref{eq:liminf_partial_result}, \eqref{eq:liminf_partial_inequality_6}, \eqref{eq:liminf_partial_inequality_7}, \eqref{eq:liminf_partial_inequality_8}, \eqref{eq:liminf_partial_inequality_9}, \eqref{eq:liminf_partial_inequality_10}, we obtain  
    \begin{align*}
        \frac{d\mu}{d\lambda}(x_0) \geq (1-\delta)^2d_{\widetilde W}(x_0, u^-(x_0), u^+(x_0)) - C\delta = (1-\delta)^2d_{ W}(x_0, u^-(x_0), u^+(x_0)) - C\delta. 
    \end{align*}
    by our choice of truncation constants (cf.\ Corollary \ref{corol:truncation_constant}) and with $C > 0$ not depending on $\delta$.  
    Since $\delta > 0$ was arbitrary, we conclude.
\end{proof}


\section{Limsup inequality}\label{sec:limsup}

This section is devoted to the proof of the limsup inequality.
we start by proving the result in the case with no mass constraint.

\begin{theorem}\label{thm:limsup}
    Let $\{ \varepsilon_n \}_n$ be an infinitesimal sequence. 
    Assume that \ref{H1}-\ref{H8} hold.
    Let $u \in L^1(\Omega; \R^M)$. Then, there exists $\{u_n\}_n \subset H^1(\Omega; \R^M)$ such that 
    \[
        \mathcal{F}_\infty(u) = \lim_{n \to \infty} \mathcal{F}_{\eps_n}(u_n).
    \]
    and $u_n \to u$ in $L^1(\Omega;\R^M)$.
\end{theorem}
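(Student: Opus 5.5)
We only need to treat $u\in BV(\Omega;\{a,b\})$ with $\mathcal{F}_\infty(u)<\infty$; otherwise the constant sequence $u_n = u$ works trivially. Write $u = \ca_E b + (1-\ca_E)a$ for a set $E$ of finite perimeter. The jump set of $u$ is contained, up to $\hno$-null sets, in $(\partial^*E\cap\Omega)\cup\bigcup_i(\partial\Omega_i\cap\Omega)$.

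\textbf{Reduction to regular sets.} The first step is a reduction to sets $E$ with piecewise $C^2$ boundary for which $\partial E\cap\partial\Omega_i$ is relatively open in $\partial\Omega_i$. Define $g(x)\coloneqq \dd_W(x;a(x),b(x))$ off $\bigcup_i\partial\Omega_i$. On $\partial\Omega_i\cap\partial\Omega_j$, define it through the adapted distance between the traces from the two sides. By \ref{H6}, \ref{H7} and Lemma \ref{lemma:geodesic_distance_is_locally_lipschitz}, $g$ is bounded and continuous inside each $\Omega_i$, and we expect it to be lower semicontinuous across the interfaces. The jumps of $u$ on $\partial\Omega_i\setminus\partial^*E$ are fixed by the wells, so they contribute a term independent of $E$ once $E$ is modified appropriately. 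Proposition \ref{prop:approx_sets} then gives sets $F_n$ with $|E\triangle F_n|\to0$ and $\mathcal{F}_\infty(u_{F_n})\to\mathcal{F}_\infty(u)$. Since the $\Gamma$-limsup is $L^1$-lower semicontinuous, a diagonal argument reduces the problem to such regular $E$.

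\textbf{Construction for regular $E$.} Fix $\eta>0$. Cover the interface by finitely many small patches, each either inside some $\Omega_i$ or a piece of $\partial\Omega_i\cap\partial\Omega_j$. On each patch, freeze a point $x_k$ and use Theorem \ref{existence-of-geodesics-with-bounded-path-length} under \ref{H4}, \ref{H5}, \ref{H8}. This gives a near-optimal curve $\gamma_k$ for $\dd_W(x_k;\cdot,\cdot)$ in the adjusted variables $w=T(x_k,\cdot)^{-1}u$, joining $\pm e_1$, or the adapted distance across $\partial\Omega_i$. Crucially, $\gamma_k$ has Euclidean length bounded uniformly in $k$ and $\|\gamma_k\|_\infty\le K$. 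Reparametrize by the equipartition ODE $\dot\theta=\sqrt{W(x_k,T\gamma(\theta))}$ to obtain a one-dimensional profile $z_k$. Replace the wells by $\pm e_1$ within a fixed distance by a linear cut-off, so that $z_k$ reaches them in finite time $\tau_\eta$; the cost of this is controlled by $f(\rho)\rho$ through \ref{H4}. Then set
\[
u_n(x)\coloneqq T\Bigl(x,\ z_k\bigl(\mathrm{sd}_E(x)/\varepsilon_n\bigr)\Bigr)
\]
near the $k$-th patch, using the signed distance to $\partial E$, and $u_n=u$ away from an $\varepsilon_n\tau_\eta$-neighbourhood.

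\textbf{Energy estimate.} The gradient contains the term $\nabla_xT\,z_k$, bounded by $|\nabla a|$. Its contribution $\varepsilon_n\int|\nabla a|^2$ over the layer vanishes by \ref{H6}, since $a\in W^{1,2}$, $z_k$ is bounded, and the layer has width $O(\varepsilon_n)$. The potential term is compared with the frozen one through \ref{H7}, at the cost of a factor $(1+\omega(\mathrm{diam\ patch}))$. Coarea along $\mathrm{sd}_E$ then gives a bound by $\int_{\partial E}g\,\dhno+C\eta$. Near $\partial\Omega_i$, the profile switches potentials at the interface, which is exactly what realizes the adapted distance.

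\textbf{Main obstacle.} The hardest part is gluing neighbouring patches with different $z_k$. I would interpolate linearly in $w$ over transition regions of tangential width $\ll$ patch size. This requires uniform $L^\infty$ and length bounds on the $z_k$, from Theorem \ref{existence-of-geodesics-with-bounded-path-length} and Lemma \ref{lemma:eps_minimizers_are_bounded}, so that the extra energy is $O(\mathcal{H}^{N-2}\text{-size}\times\text{patch count}\times\text{transition width})\to0$. The sets near the singular set of $\partial E\cup\partial\Omega_i$ have small $\hno$ measure and are handled with crude bounds. Finally, let $\eta\to0$ diagonally.
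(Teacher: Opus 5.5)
Your proposal follows the paper's proof in all essential respects. The shared steps are: reduction to piecewise $C^2$ sets via Proposition \ref{prop:approx_sets} plus a diagonal argument; near-optimal profiles at frozen points, built in the adjusted variables and composed back with $T(x,\cdot)$; \ref{H7} to unfreeze the potential; the $\nabla_x T$ terms killed by $\varepsilon_n\|\nabla a\|_{L^2}^2\to0$; gluing of neighbouring patches by interpolation in $w$ over thin tangential strips (the paper's partition of unity $\varphi_n^i$); and crude bounds near the singular set. The differences are minor: you fix a finite family of patches for each $\eta$ and diagonalize where the paper lets the lattice spacing $r_n=\varepsilon_n^\alpha$ shrink with $n$, so only finitely many $W^{1,1}$ curves enter for each $\eta$ and the uniform length bound of Theorem \ref{existence-of-geodesics-with-bounded-path-length} is convenient rather than indispensable in your version; and you reach the wells by a cut-off instead of the $\lambda$-regularized reparametrization of Lemma \ref{lemma:reparameterization}.
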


To prove this, we need the following preliminary result.

\begin{lemma}\label{lemma:reparameterization}
    Fix $\lambda > 0$, $\varepsilon > 0$, $x \in \Omega$, and $p,q \in \R^M$. Let $\gamma \in C^1([-1,1]; \R^M)$ with $\gamma'(s) \neq 0$ for all $s \in (-1,1)$, $\gamma(-1) = p$, $\gamma(1) = q$. Then, there exist $\tau > 0$ and $C > 0$ with
    $$ C \varepsilon \leq \tau \leq \frac{\varepsilon}{\sqrt{\lambda}} \int_{-1}^1 | \gamma'(s)| \dd s, $$
    and $g \in C^1([0, \tau]; [-1,1])$ such that
    \begin{equation}\label{eq:reparameterization_speed}
        (g'(t))^2 = \frac{\lambda + W(x,\gamma(g(t)))}{\varepsilon^2 | \gamma'(g(t))|^2},
    \end{equation}
    for all $t \in (0,\tau)$, $g(0) = -1$, $g(\tau) = 1$, and
    \begin{align*}
        \int_0^\tau \left[ \frac{1}{\varepsilon} W(x, \gamma(g(t))) + \varepsilon |\gamma'(g(t))|^2 | g'(t)|^2 \right] \dd t \leq \int_{-1}^1 2 \sqrt{W(x, \gamma(s))} | \gamma'(s) | \dd s + 2 \sqrt{\lambda} \int_{-1}^1 | \gamma'(s) |\dd s.
    \end{align*}
\end{lemma}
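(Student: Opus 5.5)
We would construct $g$ as the inverse of an explicitly defined time map, since the ODE \eqref{eq:reparameterization_speed} is autonomous and separable. Set
\[
h(s) \coloneqq \frac{\varepsilon |\gamma'(s)|}{\sqrt{\lambda + W(x,\gamma(s))}}, \qquad s\in[-1,1],
\]
and define $G(s)\coloneqq \int_{-1}^s h(r)\,\dd r$, with $\tau\coloneqq G(1)$. The function $h$ is continuous on $[-1,1]$, because $\gamma\in C^1$ and $W(x,\cdot)$ is continuous (we work at a fixed $x$, inside one $\Omega_i$, where $W_i$ is continuous by \ref{H6}). It is strictly positive on $(-1,1)$, since $\gamma'\neq0$ there and $\lambda>0$. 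Hence $G$ is a $C^1$ strictly increasing bijection $[-1,1]\to[0,\tau]$ with $G'>0$ on $(-1,1)$. We let $g\coloneqq G^{-1}$, so that $g(0)=-1$ and $g(\tau)=1$. On $(0,\tau)$ the inverse function theorem gives $g'(t)=1/h(g(t))$, and squaring this is exactly \eqref{eq:reparameterization_speed}.

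The upper bound on $\tau$ follows from $W\ge0$: $h\le \varepsilon|\gamma'|/\sqrt\lambda$, and integrating gives $\tau\le \frac{\varepsilon}{\sqrt\lambda}\int_{-1}^1|\gamma'|$. For the lower bound we use that $\gamma([-1,1])$ is compact and $W(x,\cdot)$ is continuous, so $W(x,\gamma)\le K$. Then $h\ge \varepsilon|\gamma'|/\sqrt{\lambda+K}$, and $\tau\ge C\varepsilon$ with $C\coloneqq (\lambda+K)^{-1/2}\int_{-1}^1|\gamma'|>0$.

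For the energy estimate we insert \eqref{eq:reparameterization_speed} into the gradient term. This gives $\varepsilon|\gamma'(g)|^2|g'|^2=\frac1\varepsilon(\lambda+W(x,\gamma(g)))$, so the integrand becomes
\[
\frac1\varepsilon\bigl(2W(x,\gamma(g))+\lambda\bigr)\le \frac{2}{\varepsilon}\bigl(\lambda+W(x,\gamma(g))\bigr) = 2\sqrt{\lambda+W(x,\gamma(g))}\,|\gamma'(g)|\,g',
\]
where the last identity again uses $g'=\sqrt{\lambda+W}/(\varepsilon|\gamma'|)$. We then change variables $s=g(t)$; this is legitimate because $g$ is $C^1$ and monotone, and the integrand is continuous. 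The result is the bound
\[
\int_{-1}^1 2\sqrt{\lambda+W(x,\gamma(s))}\,|\gamma'(s)|\,\dd s.
\]
The conclusion follows from the subadditivity $\sqrt{\lambda+W}\le\sqrt W+\sqrt\lambda$.

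The only delicate point is regularity at the endpoints. There $\gamma'$ may vanish, so $h(\pm1)$ could be $0$ and $g'$ blows up. This does not affect the argument: $G$ remains a homeomorphism, and $g$ is $C^1$ on the open interval. To obtain $g\in C^1([0,\tau])$ literally, we would either read the $C^1$ requirement as holding on $(0,\tau)$ with continuity up to the boundary, or assume $\gamma'\neq0$ on the closed interval after a harmless reparametrization of $\gamma$. Either way, the energy integral is finite, since its integrand is bounded by $(2K+\lambda)/\varepsilon$.
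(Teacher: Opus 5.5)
Your proof is correct and follows the standard argument the paper relies on. The paper gives no proof of its own and cites Lemma 6.3 of \cite{CriGra} and Proposition 2 of \cite{modica87}, where $g$ is likewise obtained by inverting the time map $s\mapsto\int_{-1}^s \varepsilon|\gamma'|/\sqrt{\lambda+W(x,\gamma)}$; the ODE is then substituted into the gradient term and $\sqrt{\lambda+W}\le\sqrt{\lambda}+\sqrt{W}$ finishes the estimate. Your endpoint caveat is also justified: if $\gamma'(\pm1)=0$, then $\lambda>0$ forces $g'\to\infty$ at $t=0$ or $t=\tau$, so the statement's requirement $g\in C^1([0,\tau])$ should be read as $C^1$ on $(0,\tau)$ and continuous on $[0,\tau]$.
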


A proof of this result can be found in \cite[Lemma 6.3]{CriGra} (see also \cite[Proposition 2]{modica87} and \cite[Lemma 3.2]{baldo}).

\begin{proof}[Proof of Theorem \ref{thm:limsup}]
We divide the proof into several steps.\\[0.5em]
\textbf{Step 0: Approximation with $C^2$ sets.} 
We use Proposition \ref{prop:approx_sets} to approximate the jump set of $u$ with $C^2$ sets such that they coincide with the $C^2$ jump sets of the wells as much as possible. 
We define the singular set $S$ of the approximated jump set as the set of points belonging to the approximation for which either the measure-theoretic exterior normal is not defined, or its norm is not equal to $1$. Since S has finite $\mathcal{H}^{N-2}$-measure, we can define 
$$ S_n \coloneqq \mathcal{N}_{C\varepsilon_n} (S) = \{ x \in \Omega : | x - y | < C \varepsilon_n \text{ for } y \in S \}, $$
and it follows that
$$ \mathcal{L}^N (S_n) \leq D \varepsilon_n^2, $$
where the constant $D > 0$ depends only on $\mathcal{H}^{N-2} (S)$.

Therefore, considering only $\Omega \setminus S_n$, we are effectively isolating the singularities, and we can now assume that each interface $\Sigma$ is a $C^2$ parameterized hypersurface, where the normals are defined and the distance function from the interface is $C^1$ in a tubular neighborhood around it. In the following steps, we will carry out the construction of the recovery sequence and the relative computations only for the local case of a single $C^2$ interface. The global construction can be carried out in a similar way as in \cite[Lemma 4.6]{CriGra}, by using interpolation between the local constructions built below. We decided not to detailed this standard construction, and focus on the ideas for building the recovery sequence close to the regular part of the interfaces.\\[0.5em]
\textbf{Step 1: Construction of a local recovery sequence.}
For $0 < \alpha < \beta < 1$ define
\begin{equation}
    r_n \coloneqq \varepsilon_n^\alpha, \qquad \eta_n \coloneqq \varepsilon_n^\beta.
\end{equation}
This choice allows us to have $\varepsilon_n \ll \eta_n \ll r_n$.
Thanks to the assumption $a,b \in L^\infty $, using Theorem \ref{existence-of-geodesics-with-bounded-path-length} we know that there exists a constant $\overline{L} > 0$ such that the geodesics satisfy $L(\gamma) \leq \overline{L}$, where $L(\gamma)$ is the length of the geodesic $\gamma$. We therefore also define
\begin{equation}
    \ell_n \coloneqq \overline{L} \varepsilon_n.
\end{equation}
We are assuming that $\Sigma$ is the graph of a $C^2$ function, therefore there exist an open set $D \subset \R^{N-1}$, and a $C^2$ function $\psi \colon D \to \R$ such that $ \psi(D) = \Sigma $. \\
For $y \in D$, we define the normal to the interface at $\psi(y)$ as $\nu(y)$. With this, we define the function $\Psi \colon D \times [-\ell_n, \ell_n] \to \R^N$ as 
$$ \Psi(y,t) \coloneqq \psi(y) + t \nu(y). $$
For $n$ large enough, the tubular neighborhood theorem guarantees that this map is a $C^1$ diffeomorphism, and that every point in it has a unique projection on the interface, meaning that $\Psi(y,t)$ get projected to $\psi(y)$. \\
Now, take a lattice of points $\{y_i\}_{i \in I_n} \subset D$ with spacing $2r_n$, and define the set
\begin{equation}
    U_n^i \coloneqq \{ x \in \Omega : \exists y \in Q'(y_i,r_n), \exists t \in [-\ell_n, \ell_n] : \Psi(y,t) = x \},
\end{equation}
as the local tubular neighborhood around $\psi(y_i)$.
\begin{figure}[htbp]
        \includegraphics[width=0.9\linewidth]{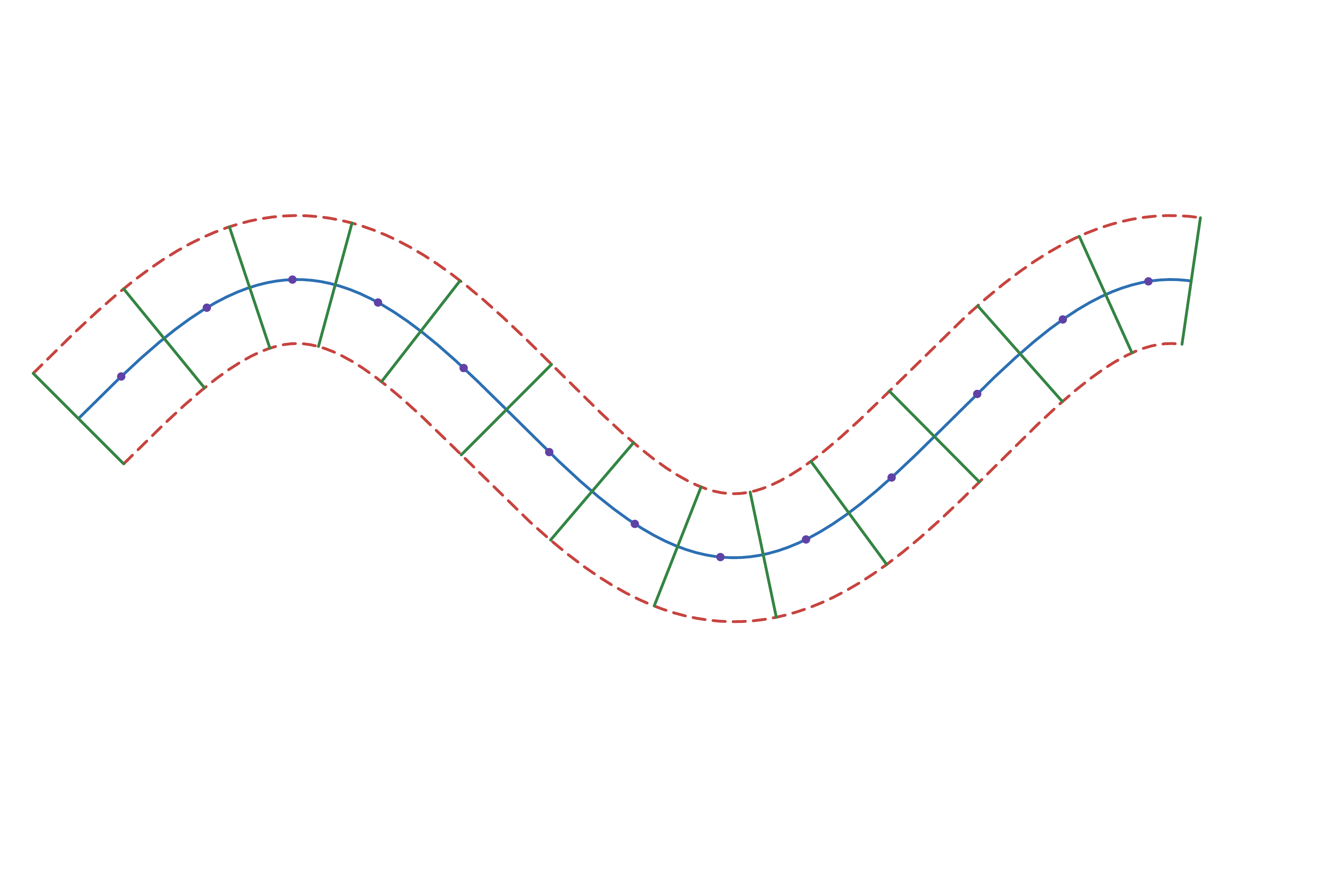}
    \caption{Partition of the tubular neighborhood.}
    \label{fig:tub_neighborhood}
\end{figure}

We observe that the, for the index set $I_n$, it holds that
\begin{equation}\label{eq:estimate_index_set}
    \#| I_n | \leq C \frac{1}{r_n^{N-1}},
\end{equation}
where the constant $C$ does not depend on $n$.

Take also $\{\varphi_n^i\}_{n, i \in I_n} \subset C^\infty (D)$ a partition of unity such that
\begin{equation}
    0 \leq \varphi_n^i \leq 1, \qquad \varphi_n^i \equiv 1 \in Q'(y_i,r_n-\eta_n), \qquad \varphi_n^i \equiv 0 \in D \setminus Q'(y_i,r_n+\eta_n)
\end{equation}
\begin{equation}
    \sum_{i \in I_n} \varphi_n^i(y) \equiv 1 \;\;\text{ in }\; D, \qquad | \nabla \varphi_n^i | \leq \frac{C}{2\eta_n}.
\end{equation}
Let $\gamma_i$ be the geodesic given by Theorem \ref{existence-of-geodesics-with-bounded-path-length} relative to $\psi(y_i)$, namely a curve in $C^1([-1,1]; \R^M)$ with $\gamma_i (-1) = u^- (\psi(y_i))$, $\gamma_i (1) = u^+ (\psi(y_i))$ and $\gamma_i'(s) \neq 0$, such that
\begin{equation}
    L(\gamma_i) \leq \overline{L}, \qquad \int_{-1}^1 2 \sqrt{W(\psi(y_i), \gamma_i(s))} |\gamma_i'(s)|\dd s = \dd_W(\psi(y_i), u^+(\psi(y_i)), u^-(\psi(y_i))).
\end{equation}
Now, we apply Lemma \ref{lemma:reparameterization} with
$$ \lambda = \varepsilon_n^2, \qquad \varepsilon = \varepsilon_n, \qquad \gamma = \gamma_i $$
to find $\tau_n^i < \ell_n$ and $g_n^i \in C^1(-\tau_n^i, \tau_n^i)$ such that
\begin{equation}\label{eq:reparameterization_limsup}
    ((g_n^i)'(t))^2 = \frac{\varepsilon_n^2 + W(x,\gamma_i(g_n^i(t)))}{\varepsilon^2 | \gamma_i'(g_n^i(t))|^2},
\end{equation}
and
\begin{equation*}
    \int_0^{\tau_n^i} \! \left[ \frac{1}{\varepsilon} W(x, \gamma_i(g_n^i(t))) + \varepsilon |\gamma_i'(g_n^i(t))|^2 | (g_n^i)'(t)|^2 \right] \! \dd t\leq \int_{-1}^1 2 \sqrt{W(x, \gamma_i(s))} | \gamma_i'(s) |  \dd s + 2 \overline{L} \varepsilon_n. 
\end{equation*}
This result, together with Theorem \ref{existence-of-geodesics-with-bounded-path-length}, also implies that the reparametrized geodesics are bounded
\begin{equation}\label{eq:limsup_bounded_geodesics}
    \| \gamma_i \circ g_n^i \|_\infty \leq C < + \infty.
\end{equation}
This in turn, together with \ref{H4} and Lemma \ref{lemma:reparameterization}, gives us
\begin{equation}\label{eq:limsup_geodesics_1overepssquared}
    | \gamma_i'(g_n^i(t)) | \, |(g_n^i)'(t) | \leq \frac{C}{\varepsilon_n^2}.
\end{equation}
Extend $g_n^i$ to the whole real line by setting $g_n^i(s) \equiv 1$ for $s \geq \tau_n^i$ and $g_n^i(s) \equiv -1$ for $s \leq -\tau_n^j$.\\
We define $v_n^i$ in $ D \times [-\ell_n, \ell_n] $ using the maps $T_a$ defined in Remark \ref{rem:T_symmetric}, as
\begin{equation}
    v_n^i(y,t) \coloneqq T_a (\Psi(y,t)) T_a^{-1} (\psi(y_i)) \gamma_i(g_n^i(t)).
\end{equation}
The reason for this choice is that this respects the boundary values, since 
\begin{align}\label{eq:vni_positive_boundary}
    v_n^i(y,\ell_n) &= T_a (\Psi(y,\ell_n)) T_a^{-1} (\psi(y_i)) \gamma_i(g_n^i(\ell_n)) \nonumber\\
    &= T_a (\Psi(y,\ell_n)) T_a^{-1} (\psi(y_i)) a ( \psi (y_i)) \nonumber\\
    &= T_a (\Psi(y,\ell_n)) e_1 \nonumber\\
    &= a(\Psi(y, \ell_n)),
\end{align}
and analogously 
\begin{equation}\label{eq:vni_negative_boundary}
    v_n^i(y, -\ell_n) = -a(\Psi(y,-\ell_n)).
\end{equation} Then, for $y \in D$ and $t \in [-\ell_n, \ell_n]$ we define
\begin{equation}
    v_n(y,t) \coloneqq \sum_{i \in I_n} \varphi_n^i(y) v_n^i (y,t).
\end{equation}
This sequence still respects the boundary values, since
\begin{align*}
    v_n (y, \ell_n) &= \sum_{i \in I_n} \varphi_n^i(y) v_n^i (y,\ell_n) \\
    &= \sum_{i \in I_n} \varphi_n^i(y) a (\Psi(y, \ell_n)) \\
    &= a (\Psi(y, \ell_n)) \sum_i \varphi_n^i(y) \\
    &= a (\Psi(y, \ell_n)),
\end{align*}
where in the last step we used \eqref{eq:vni_positive_boundary}. Analogously it holds
$$ v_n (y, - \ell_n) = - a (\Psi(y, -\ell_n)).$$
Also, thanks to $a \in L^\infty$, it is clear that 
\begin{equation}\label{eq:vn_bounded}
    \sup_{n \in \N} \| v_n \|_\infty \leq C < + \infty.
\end{equation}
Since $\Psi$ is a diffeomorphism, we can define the local recovery sequence $u_n$ around $\Sigma$ as
\begin{gather}\label{eq:recovery_sequence}
    u_n(x) \coloneqq \begin{cases}
        a(x) &\qquad \mathrm{dist}_\Sigma(x) > \ell_n,\\
        v_n(\Psi^{-1}(x)) &\qquad |\mathrm{dist}_\Sigma(x)| \leq \ell_n,\\
        -a(x) &\qquad \mathrm{dist}_\Sigma(x) < - \ell_n.
    \end{cases}
\end{gather}
Here, $\mathrm{dist}_\Sigma$ denotes the signed distance from $\Sigma$.
Note that this construction implies $u_n \in L^1(\Omega; \R^M)$.
Finally, we denote the tubular neighborhood by $\Sigma_n \coloneqq \Psi(D \times [-\ell_n, \ell_n])$ (see Figure \ref{fig:tub_neighborhood}).\\[0.5em]
\textbf{Step 2: Localization in strip.}
Since we define the recovery sequence to be equal to the wells outside of the $\ell_n$-tubular neighborhood of the interface, and the wells are in $W^{1,2}$, we get that it is only necessary to study what happens inside of the tubular neighborhood.\\
Indeed,
\begin{align*}
    \mathcal{F}_{\varepsilon_n}(u_n) &= \int_\Omega \left[ \frac{1}{\varepsilon_n} W(x, u_n) + \varepsilon_n | \nabla u_n|^2 \right] \dd x \\
    &= \int_{\Sigma_n} \left[ \frac{1}{\varepsilon_n} W(x, u_n) + \varepsilon_n | \nabla u_n|^2 \right] \dd x + \int_{\{ |\mathrm{dist}_\Sigma(x)| > \ell_n \}}  \varepsilon_n |\nabla a|^2 \dd x.
\end{align*}
The last term of the last line can be bounded by
$$ \int_{\{ |\mathrm{dist}_\Sigma(x)| > \ell_n \}}  \varepsilon_n |\nabla a|^2 \dd x \leq \varepsilon_n \| \nabla a \|^2_{L^2(\Omega; \R^{M \times N})} $$
which vanishes in the limit since $a \in W^{1,2}(\Omega; \R^M)$. We are therefore left with estimating the integral over the tubular neighborhood.
In order to change coordinates with the diffeomorphism $\Psi$, we need to see how the gradient changes under this reparameterization. 

Thanks to the definition of $\Psi$, we have that
\begin{gather*}
    J \Psi = ( J \psi + t J \nu \; | \; \nu ).
\end{gather*}
We now want to compute the determinant $\mathrm{det}(J\Psi)$.
Since $v_n (y,t) = u_n (\Psi(y,t))$, by the chain rule we get $ \nabla_{(y,t)} v_n = (J \Psi)^T \nabla_x u_n $, therefore
$$ \nabla_x u_n = ((J \Psi)^{T})^{-1} \nabla_{(y,t)} v_n. $$
To find $| \nabla_x u_n |^2$, we compute the dot product
$$ | \nabla_x u_n |^2 = (\nabla_{(y,t)} v_n)^T ( (J\Psi)^T J\Psi)^{-1} \nabla_{(y,t)} v_n. $$
The matrix $ (J\Psi)^T J\Psi$ yields
\begin{gather*}
    (J\Psi)^T J\Psi = \begin{pmatrix}
        (J\psi + t J \nu)^T (J\psi + t J \nu) & (J\psi + t J \nu)^T \nu \\
        \nu^T (J\psi + t J \nu) & \nu^T \nu
    \end{pmatrix}.
\end{gather*}
We denote the top left block by 
\begin{equation}
    G = (J\psi + t J \nu)^T (J\psi + t J \nu).
\end{equation}
Since $\nu$ is a unit normal vector, we have $\nu^T \nu = |\nu|^2 = 1$. Differentiating this identity we also find that $(J\nu)^T \nu = 0$. Moreover, since $\nu $ is orthogonal to the surface tangent vectors $J \psi$, we have $(J\psi)^T \nu = 0$. In particular, these imply that the matrix is equal to
\begin{gather*}
    (J\Psi)^T J\Psi = \begin{pmatrix}
        G & 0 \\
        0 & 1
    \end{pmatrix} \implies ((J\Psi)^T J\Psi)^{-1} = \begin{pmatrix}
        G^{-1} & 0 \\
        0 & 1
    \end{pmatrix}.
\end{gather*}
Splitting the gradient of $v_n$ into a tangential part and a normal part, we get
\begin{gather}
    | \nabla_x u_n |^2 = \begin{pmatrix}
        ( \nabla_y v_n)^T & \partial_t v_n
    \end{pmatrix} \begin{pmatrix}
        G^{-1} & 0 \\ 0 & 1
    \end{pmatrix} \begin{pmatrix}
        \nabla_y v_n \\ 
        \partial_t v_n
    \end{pmatrix} = \nabla_y v_n \cdot G^{-1} \nabla_y v_n + | \partial_t v_n |^2.
\end{gather}
Therefore, applying the change of coordinates $x = \Psi(y,t)$, we get
\begin{align*}
    &\int_{\Sigma_n} \left[ \frac{1}{\varepsilon_n} W(x, u_n) + \varepsilon_n | \nabla u_n|^2 \right] \dd x \\
    &= \sum_{i \in I_n} \int_{U_n^i} \left[ \frac{1}{\varepsilon_n} W(x, u_n) + \varepsilon_n | \nabla u_n|^2 \right] \dd x \\
    &= \sum_{i \in I_n} \int_{Q'(y_i, r_n)} \int_{-\ell_n}^{\ell_n} \left[ \frac{1}{\varepsilon_n} W(\Psi(y,t), v_n) + \varepsilon_n | \partial_t v_n|^2 + \varepsilon_n \nabla_y v_n \cdot G^{-1} (y,t) \nabla_y v_n \right] |\mathrm{det}(J\Psi)| \, \mathrm{d} y \mathrm{d} t.
\end{align*}
For brevity, we define
\begin{equation*}
    g_n (y,t) \coloneqq \frac{1}{\varepsilon_n} W(\Psi(y,t), v_n) + \varepsilon_n | \partial_t v_n|^2 + \varepsilon_n \nabla_y v_n \cdot G^{-1} (y,t) \nabla_y v_n.
\end{equation*}

Since $\psi$ is a $C^2$ map, and $\nu$ is $C^1$, we have that $J\psi + t J\nu$, and therefore $G$, is continuous, therefore its norm is bounded on compact sets. Moreover, thanks to tubular neighborhood theorem, $G$ is also always strictly positive definite. What these two facts imply is that for all $n \in \N$ and for all $y \in D, t \in (-\ell_n, \ell_n)$, there exist $C > 0$ and $t_0 \in (-\ell_n, \ell_n)$ such that $\forall t \in (-t_0,t_0)$ it holds
\begin{equation}\label{eq:tangential_derivative}
    \frac1C | \nabla_y v_n |^2 \leq \nabla_y v_n \cdot G^{-1} \nabla_y v_n \leq C | \nabla_y v_n |^2.
\end{equation}
A proof of this can be found in \cite[Lemma 2.3]{lotoreichik2023spectralasymptoticsdiracoperator}.
Thanks to the definition of the recovery sequence and the partition of unity $\varphi_n^i$, we can split the integral as
\begin{align*}
    \sum_{i \in I_n} \int_{Q'(y_i, r_n)} \int_{-\ell_n}^{\ell_n} g_n (y,t) | \mathrm{det}(J\Psi)| \,\mathrm{d}y \mathrm{d}t = &\sum_{i \in I_n} \int_{Q'(y_i,r_n-\eta_n)} \int_{-\ell_n}^{\ell_n} g_n (y,t) | \mathrm{det}(J\Psi)| \, \mathrm{d} y \mathrm{d} t \\
    + &\sum_{i \in I_n} \int_{Q'(y_i,r_n) \setminus Q'(y_i, r_n - \eta_n)} \int_{-\ell_n}^{\ell_n} g_n (y,t) | \mathrm{det}(J\Psi)| \, \mathrm{d} y \mathrm{d} t.
\end{align*}
We denote the first term of the right-hand side by $I_1^n$ and the second term by $I_2^n$. \\[0.5em]
\textbf{Step 3: Estimate of $I_2^n$.}
We prove that the contribution of $I_2$ is neglibile in the limit.\\
Thanks to Assumption \ref{H5}, together with \eqref{eq:vn_bounded}, we know that $\forall n \in \N$ it holds $W(\Psi(y,t), v_n) \leq C$, which implies
\begin{align*}
    &\sum_{i \in I_n} \int_{Q'(y_i,r_n) \setminus Q'(y_i, r_n - \eta_n)} \int_{-\ell_n}^{\ell_n} \cfrac{1}{\varepsilon_n} W(\Psi(y,t),v_n) | \mathrm{det}(J\Psi)|\, \dd y \dd t \\
    &\qquad \qquad \qquad\qquad \qquad\qquad\leq \frac{1}{r_n^{N-1}} \left[ r_n^{N-1} - (r_n - \eta_n)^{N-1} \right] C \frac{\ell_n}{\varepsilon_n} \\
    &\qquad \qquad \qquad\qquad \qquad\qquad= \overline{L} C \left[ 1 - \left( 1 - \frac{\eta_n}{r_n} \right)^{N-1} \right] \to 0,
\end{align*}
where we used $\ell_n = \overline{L} \varepsilon_n$ and \eqref{eq:estimate_index_set}.
This term vanishes in the limit since $\eta_n \ll r_n$.

For the gradient part of $I_2$, observe that for a point $y \in Q'(y_i,r_n) \setminus Q'(y_i, r_n - \eta_n)$, there exists a finite set of indexes $J \subset I_n$ independent of $y$ such that $\varphi_n^j(y) = 0$ for all $j \in I_n \setminus J$. In particular, the cardinality of the set $J $ depends only on the dimension and is uniform in $n$. Therefore for $I_2$ we have
$$ v_n(y,t) \coloneqq \sum_{i \in J} \varphi_n^i(y) T_a (\Psi(y,t)) T_a^{-1} (\psi(y_i)) \gamma_i(g_n^i(t)). $$

We first treat the derivative with respect to $t$. For this we have
\begin{align*}
    | \partial_t v_n (y,t)|^2 &= \left| \partial_t \sum_{i \in J(y)} \varphi_n^i(y) v_n^i (y,t) \right|^2 \\
    &\leq \sum_{i \in J(y)} |\partial_t v_n^i(y,t)|^2 \\
    &\leq \sum_{i \in J(y)} C | \nabla T_a (\Psi(y,t)) \cdot n(y) \gamma_i (g_n^i(t)) + T_a (\Psi(y,t)) \gamma_i'(g_n^i(t)) (g_n^i)'(t) |^2 \\
    &\leq C | \nabla T_a (\Psi(y,t)) |^2 + \frac{C}{\varepsilon_n^2} |T_a (\Psi(y,t))|^2,
\end{align*}
where we used that $| \varphi_n^i | \leq 1$, that $| J(y) | $ is uniform in $n$, together with \eqref{eq:limsup_bounded_geodesics} and \eqref{eq:limsup_geodesics_1overepssquared}.
This implies 
\begin{align}
    &\sum_{i \in J(y)} \int_{Q'(y_i,r_n) \setminus Q'(y_i, r_n - \eta_n)} \int_{-\ell_n}^{\ell_n} \varepsilon_n | \partial_t v_n|^2 | \mathrm{det}(J \Psi) | \,\dd y \dd t \nonumber\\
    &\qquad\qquad\leq C \varepsilon_n \| \nabla T_a \|^2_{L^2(\Omega)} + C\left[ 1 - \left( 1 - \frac{\eta_n}{r_n} \right)^{N-1} \right] \frac{\ell_n}{\varepsilon_n} \| T_a\|^2_\infty \to 0,
\end{align}
where this last term vanishes since $\ell_n = \overline{L} \varepsilon_n$, together with $\mathrm{det}(J \Psi)$ being uniformly bounded, \eqref{eq:T_a_properties} and since
$$  1 - \left( 1 - \frac{\eta_n}{r_n} \right)^{N-1} \leq 2 (N-1) \frac{\eta_n}{r_n}, $$
and we chose $\eta_n, r_n$ such that $\eta_n \ll r_n$.

For the term involving the tangential derivative, using \eqref{eq:tangential_derivative} we just need to bound $|\nabla_y v_n|^2$. Therefore, using analogous reasoning as the normal derivative, we have
\begin{align*}
    | \nabla_y v_n(y,t)|^2 &= \left| \nabla_y \sum_{i \in J(y)} \varphi_n^i(y) v_n^i (y,t) \right|^2 \\
    &\leq C \sum_{i \in J(y)} | \nabla_y \varphi_n^i(y) T_a (\Psi(y,t)) + \varphi_n^i(y) (J \psi + t Jn) \nabla_y T_a (\Psi(y,t)) |^2\\
    &\leq C | \nabla T_a (\Psi(y,t))|^2 + \frac{C}{\eta_n^2} | T_a (\Psi(y,t)) |^2,
\end{align*}
where we also used $| \nabla \varphi_n^i | \leq \frac{1}{2 \eta_n}$, together with $J\psi + t J \nu$ being uniformly bounded.
This implies
\begin{align}
    &\sum_{i \in J(y)} \int_{Q'(y_i,r_n) \setminus Q'(y_i, r_n - \eta_n)} \int_{-\ell_n}^{\ell_n} \varepsilon_n \nabla_y v_n G^{-1} \nabla_y v_n | \mathrm{det}(J\Psi)|\, \dd y \dd t \nonumber\\
    &\qquad\qquad\qquad\leq \varepsilon_n \| \nabla T_a \|^2_{L^2(\Omega)} + \left[ 1 - \left( 1 - \frac{\eta_n}{r_n} \right)^{N-1} \right] \frac{\ell_n \varepsilon_n}{\eta_n^2} \| T_a \|^2_\infty \to 0,
\end{align}
where we used that $\mathrm{det}(J \Psi)$ is bounded, together with \eqref{eq:T_a_properties}.
This last term vanishes since 
$$ \left[ 1 - \left( 1 - \frac{\eta_n}{r_n} \right)^{N-1} \right] \frac{\ell_n \varepsilon_n}{\eta_n^2} \leq C \frac{\varepsilon_n^2}{\eta_n^2}, $$
and we chose $\varepsilon_n, \eta_n$ such that $\varepsilon_n \ll \eta_n$. We can therefore conclude.\\[0.5em]
\textbf{Step 4: estimate of $I_1^n$.}
We want to prove that $I_1^n$ converges to our desired limit, namely
\begin{align*}
    &\sum_{i \in I_n} \int_{Q'(y_i,r_n-\eta_n)} \int_{-\ell_n}^{\ell_n} \left[ \frac{1}{\varepsilon_n} W(\Psi(y,t), v_n) + \varepsilon_n | \partial_t v_n|^2 + \varepsilon_n \nabla_y v_n G^{-1} \nabla_y v_n \right] | \mathrm{det}(J\Psi)|\, \dd y \dd t \\
    &\to \int_D \dd_W (\psi(y), u^+(\psi(y)),u^-(\psi(y))) | \mathrm{det}(J \psi)|\, \dd \mathcal{H}^{N-1}(y)\\
    &= \int_\Sigma \dd_W (x, u^+ (x), u^- (x)) \, \dd \mathcal{H}^{N-1}(x),
\end{align*}
where $|\mathrm{det}(J\psi)|$ is the surface area element relative to the parameterization $\psi$, which for a $C^2$ graph is equal to $\sqrt{1 + |\nabla \psi |^2}$.

Thanks to Step 3, we can equivalently consider the following integral
$$ \sum_{i \in I_n} \int_{Q'(y_i,r_n)} \int_{-\ell_n}^{\ell_n} \left[ \frac{1}{\varepsilon_n} W(\Psi(y,t), v_n) + \varepsilon_n | \partial_t v_n|^2 + \varepsilon_n \nabla_y v_n G^{-1} \nabla_y v_n \right]| \mathrm{det}(J\Psi)|\, \dd y \dd t, $$
as effectively we are adding a vanishing term.\\
Using the relative continuity condition \ref{H7}, we can rewrite the potential term as
\begin{align*}
    &\sum_{i \in I_n} \int_{Q'(y_i,r_n)} \int_{-\ell_n}^{\ell_n} \frac{1}{\varepsilon_n} W(\Psi(y,t), v_n)| \mathrm{det}(J\Psi)|\, \dd y \dd t \\
    &\qquad= \sum_{i \in I_n} \int_{Q'(y_i,r_n)} \int_{-\ell_n}^{\ell_n} \frac{1}{\varepsilon_n} W(\Psi(y,t), T_a ( \Psi(y,t) ) T_a^{-1}(\Psi(y,t))v_n)| \mathrm{det}(J\Psi)|\, \dd y \dd t \\
    &\qquad\leq \sum_{i \in I_n} \int_{Q'(y_i,r_n)} \int_{-\ell_n}^{\ell_n} \frac{1}{\varepsilon_n} W(\psi(y_i), T_a(\psi(y_i)) T_a^{-1}(\Psi(y,t))v_n) | \mathrm{det}(J\Psi)|\, \dd y \dd t \\
    &\qquad\quad+ \sum_{i \in I_n} \int_{Q'(y_i,r_n)} \int_{-\ell_n}^{\ell_n} \cfrac{1}{\varepsilon_n} \omega (|\Psi(y,t) - \Psi(y_i,0)|) W(\Psi(y,t),v_n) | \mathrm{det}(J\Psi)|\, \dd y \dd t,
\end{align*}
where $\omega$ is the modulus of continuity from \ref{H7}.
We claim that the last term vanishes in the limit. Indeed, since $\Psi$ is $C^1$, and for $(y,t) \in Q'(y_i,r_n) \times [-\ell_n, \ell_n]$ we get 
$$ |(y,t)-(y_i,0)| \leq \sqrt{(N-1)r_n^2 + \ell_n^2} \eqqcolon f_n \to 0, $$
and therefore
$$ \omega (|\Psi(y,t) - \Psi(y_i,0)|) \leq \omega( \omega_\Psi (|(y,t) - (y_i,0)|)) \leq \omega ( \omega_\Psi (f_n)) \to 0. $$
This and assumption \ref{H5} imply that
\begin{align*}
    &\sum_{i \in I_n} \int_{Q'(y_i,r_n)} \int_{-\ell_n}^{\ell_n} \cfrac{1}{\varepsilon_n} \omega (|\Psi(y,t) - \Psi(y_i,0)|) W(\Psi(y,t),v_n)| \mathrm{det}(J\Psi)| \dd y \dd t \\
    &\qquad \qquad \qquad \qquad \qquad \qquad \qquad\leq C \cfrac{1}{r_n^{N-1}} r_n^{N-1} \cfrac{\ell_n}{\varepsilon_n} \omega( \omega_\Psi(f_n) ) \to 0.
\end{align*}
We are therefore left with estimating
\begin{align}
    &\sum_{i \in I_n} \int_{Q'(y_i,r_n)} \int_{-\ell_n}^{\ell_n} \Bigl[ \frac{1}{\varepsilon_n} W(\psi(y_i), T_a(\psi(y_i)) T_a^{-1}(\Psi(y,t))v_n) \nonumber \\
    &\hspace{6cm}+ \varepsilon_n | \partial_t v_n|^2 + \varepsilon_n \nabla_y v_n G^{-1} \nabla_y v_n \Bigr] | \mathrm{det}(J\Psi)| \dd y \dd t\label{eq:limsup_estimate_dontknow}
\end{align}
We define 
$$ w_n(y,t) \coloneqq T_a^{-1}(\Psi(y,t)) v_n(y,t). $$
Let us fix $d > 0$. For all $a,b \geq 0$, since $\left( \frac{a}{\sqrt{d}} - \sqrt{d} b \right)^2 \geq 0$, we get $2 a b \leq \frac1d |a|^2 + d |b|^2$. Thanks to this, we get
\begin{equation}\label{eq:minkowski_trick}
    | a + b |^2 \leq \left( 1 + \frac1d \right) |a|^2 + \left( 1 + d \right) |b|^2.
\end{equation}
Using this, together with \eqref{eq:T_a_properties}, for the normal derivative we get
\begin{align}
    \varepsilon_n | \partial_t v_n |^2 &= \varepsilon_n | \nabla T_a (\Psi(y,t)) \cdot n(y) w_n (y,t) + T_a(\Psi(y,t)) \partial_t w_n (y,t) |^2 \nonumber\\
    &\leq C \left( 1 + \frac1d \right) \varepsilon_n |\nabla T_a(\Psi(y,t))|^2 + C\left( 1 + d \right) \varepsilon_n |a(\Psi(y,t))|^2 |\partial_t w_n(y,t)|^2 \label{eq:change_normal}.
\end{align}
For the tangential derivative, we use again \eqref{eq:tangential_derivative}, together with \eqref{eq:minkowski_trick}, to get
\begin{align}
    \varepsilon_n \nabla_y v_n G^{-1} \nabla_y v_n &\leq C \varepsilon_n | \nabla_y v_n |^2 \nonumber\\
    &= C \varepsilon_n | (J \psi + t Jn) \nabla T_a (\Psi(y,t)) w_n (y,t) + T_a(\Psi(y,t)) \nabla_y w_n (y,t) |^2\nonumber\\
    &\leq C \left( 1 + \frac1d \right) \varepsilon_n |\nabla T_a(\Psi(y,t))|^2. \label{eq:change_tangential}
\end{align}
Here, we do not consider the term $\nabla_y w_n (y,t)$, since it turns out that, given our construction of the recovery sequence, we get
\begin{equation*}
    w_n (y,t) = T_a^{-1}(\Psi(y,t)) v_n(y,t) = T_a (\psi(y_i)) \gamma_i (g_n^i(t)) \implies \nabla_y w_n = 0.
\end{equation*}
Using \eqref{eq:change_normal} and \eqref{eq:change_tangential} in \eqref{eq:limsup_estimate_dontknow}, we get
\begin{align}
    &\sum_{i \in I_n} \int_{Q'(y_i,r_n)} \int_{-\ell_n}^{\ell_n} \bigg[ \frac{1}{\varepsilon_n} W(\psi(y_i), T_a(\psi(y_i)) T_a^{-1}(\Psi(y,t)) v_n) + \varepsilon_n | \partial_t v_n|^2 \nonumber \\
    &\hspace{8cm}+ \varepsilon_n \nabla_y v_n G^{-1} \nabla_y v_n \bigg] | \mathrm{det}(J\Psi)| \dd y \dd t \nonumber \\
    &\leq (1+d) \sum_{i \in I_n} \int_{Q'(y_i,r_n)} \int_{-\ell_n}^{\ell_n} \bigg[ \frac{1}{\varepsilon_n} W(\psi(y_i), T_a(\psi(y_i))w_n) \nonumber \\
    &\hspace{8cm}+ \varepsilon_n |a(\Psi(y,t))|^2 | \partial_t w_n|^2 \bigg] | \mathrm{det}(J\Psi)| \dd y \dd t \label{eq:limsup_remaining_term} \\
    &\qquad+ \left( 1 + \frac1d \right) \sum_{i \in I_n} \int_{Q'(y_i,r_n)} \int_{-\ell_n}^{\ell_n} \varepsilon_n | \nabla T_a (\Psi(y,t)) |^2 | \mathrm{det}(J\Psi)| \dd y \dd t. \label{eq:term_last_line}
\end{align}
The term in \eqref{eq:term_last_line} vanishes since
$$ \left( 1 + \frac1d \right) \sum_{i \in I_n} \int_{Q'(y_i,r_n)} \int_{-\ell_n}^{\ell_n} \varepsilon_n | \nabla T_a(\Psi(y,t)) |^2 | \mathrm{det}(J\Psi)| \dd y \dd t \leq \left( 1 + \frac1d \right) \varepsilon_n \| \nabla T_a \|^2_{L^2(\Omega)} \to 0. $$
Noticing that $T_a(\psi(y_i)) w_n(y,t) = \gamma_i (g_n^i(t)) \eqqcolon z_n^i (t)$, we can rewrite \eqref{eq:limsup_remaining_term} as follows:
\begin{align}
    &(1+d) \sum_{i \in I_n} \int_{Q'(y_i,r_n)} \int_{-\ell_n}^{\ell_n} \left[ \frac{1}{\varepsilon_n} W(\psi(y_i), T_a(\psi(y_i))w_n) + \varepsilon_n |a(\Psi(y,t)|^2 | \partial_t w_n|^2 \right] | \mathrm{det} (J\Psi)| \dd y \dd t \nonumber \\
    &= (1+d) \sum_{i \in I_n} \int_{Q'(y_i,r_n)} \int_{-\ell_n}^{\ell_n} \left[ \frac{1}{\varepsilon_n} W(\psi(y_i), z_n^i(t)) + \varepsilon_n \frac{|a(\Psi(y,t))|^2}{|a(\psi(y_i))|^2} |(z_n^i)'|^2 \right] | \mathrm{det} (J\Psi)| \dd y \dd t \nonumber \\
    &= (1+d) \sum_{i \in I_n} \int_{Q'(y_i,r_n)} \int_{-\ell_n}^{\ell_n} \left[ \frac{1}{\varepsilon_n} W(\psi(y_i), z_n^i(t)) + \varepsilon_n | (z_n^i)' |^2 \right] | \mathrm{det} (J\Psi)| \dd y \dd t \label{eq:limsup_remaining_term_last}\\
    &\qquad+ (1+d) \sum_{i \in I_n} \int_{Q'(y_i,r_n)} \int_{-\ell_n}^{\ell_n} \varepsilon_n \left( \frac{|a(\Psi(y,t))|^2}{|a(\psi(y_i))|^2} - 1 \right) | (z_n^i)'|^2 | \mathrm{det} (J\Psi)| \dd y \dd t \label{eq:limsup_afraction}.
\end{align}
Since by \ref{H6} we know that $a$ is continuous outside of the jump set, we get that $a(\Psi)$ is also continuous. By \ref{H3}, we have $|a(x)| \geq \delta > 0$, therefore
\begin{align*}
    \cfrac{|a(\Psi(y,t))|^2}{|a(\psi(y_i))|^2} - 1 &= \cfrac{| a(\Psi(y,t))|^2 - |a(\Psi(y_i,0))|^2}{|a(\psi(y_i))|^2} \\
    &\leq \cfrac{1}{\delta^2} \omega_{|a|^2} (|\Psi(y,t) - \Psi(y_i,0)|) \\
    &\leq \cfrac{1}{\delta^2} \omega_{|a|^2 \circ \Psi} (|(y,t) - (y_i,0)|) \\
    &\leq \cfrac{1}{\delta^2} \omega_{|a|^2 \circ \Psi} (f_n) \to 0.
\end{align*}
Substituting in \eqref{eq:limsup_afraction} we get
\begin{align*}
    &(1+d) \sum_{i \in I_n} \int_{Q'(y_i,r_n)} \int_{-\ell_n}^{\ell_n} \varepsilon_n \left( \frac{|a(\Psi(y,t))|^2}{|a(\psi(y_i))|^2} - 1 \right) | (z_n^i)' |^2 | \mathrm{det} (J\Psi)| \dd y \dd t \\
    &\leq (1+d) \frac{1}{r_n^{N-1}} r_n^{N-1} \frac{\ell_n}{\varepsilon_n} \frac{1}{\delta^2} \omega_{|a|^2 \circ \Psi} (f_n) \to 0.
\end{align*}
Since $| \mathrm{det} (J\Psi)|$ is a continuous function, it admits locally a modulus of continuity, and we can rewrite \eqref{eq:limsup_remaining_term_last} as
\begin{align}
    &(1+d) \sum_{i \in I_n} \int_{Q'(y_i,r_n)} \int_{-\ell_n}^{\ell_n} \left[ \frac{1}{\varepsilon_n} W(\psi(y_i), z_n^i(t)) + \varepsilon_n | (z_n^i)'|^2 \right] | \mathrm{det} (J\Psi)|(y,t) \dd y \dd t \nonumber \\
    &= (1+d) \sum_{i \in I_n} \int_{Q'(y_i,r_n)} \int_{-\ell_n}^{\ell_n} \left[ \frac{1}{\varepsilon_n} W(\psi(y_i), z_n^i(t)) + \varepsilon_n | (z_n^i)'|^2 \right] | \mathrm{det} (J\Psi)|(y_i,0) \dd y \dd t \label{eq:limsup_det_est_other} \\
    &\qquad+ (1+d) \sum_{i \in I_n} \int_{Q'(y_i,r_n)} \int_{-\ell_n}^{\ell_n} \left[ \frac{1}{\varepsilon_n} W(\psi(y_i), z_n^i(t)) + \varepsilon_n | (z_n^i)' |^2 \right] \cdot \nonumber \\
    &\hspace{7cm} \cdot\left( | \mathrm{det} (J\Psi)|(y,t) - | \mathrm{det} (J\Psi)|(y_i,0) \right) \dd y \dd t \label{eq:limsup_det_est}.
\end{align}
Using the modulus of continuity of $|\mathrm{det}(J \Psi)|$, we estimate \eqref{eq:limsup_det_est} as
\begin{align*}
    &(1+d) \sum_{i \in I_n} \int_{Q'(y_i,r_n)} \int_{-\ell_n}^{\ell_n} \left[ \frac{1}{\varepsilon_n} W(\psi(y_i), z_n^i(t)) + \varepsilon_n | (z_n^i)' |^2 \right]\cdot\\
    &\hspace{7cm}\cdot\left( | \mathrm{det} (J\Psi)|(y,t) - | \mathrm{det} (J\Psi)|(y_i,0) \right) \dd y \dd t \\
    &\leq (1+d) \frac{1}{r_n^{N-1}} r_n^{N-1} \ell_n \left[ \frac{1}{\varepsilon_n} C + \varepsilon_n \frac{1}{\varepsilon_n^2} C \right] \omega_{|\mathrm{det}(J\Psi)|} (f_n) \to 0.
\end{align*}
We are therefore left with estimating \eqref{eq:limsup_det_est_other}, namely
$$ (1+d) \sum_{i \in I_n} \int_{Q'(y_i,r_n)} \int_{-\ell_n}^{\ell_n} \left[ \frac{1}{\varepsilon_n} W(\psi(y_i), z_n^i(t)) + \varepsilon_n | (z_n^i)'(t)|^2 \right] | \mathrm{det} (J\Psi)|(y_i,0) \dd y \dd t. $$
Let us now zoom in on the term $| \mathrm{det} (J\Psi) | (y_i, 0)$: this would be the determinant of the matrix $(J\psi + t Jn \;| \; n ) $ computed in $(y_i,0)$, or equivalently, the determinant of the matrix $(J \psi (y_i) \; | \; n(y_i) )$. In particular, this represents the $N$-dimensional volume of the hyper-rectangle spanned by the columns of the matrix. Here we can see that, since $\nu$ is a unitary vector by definition perpendicular to the other columns, this volume will also be equal to the $N-1$-dimensional volume spanned by the first $N-1$ columns, which is equal to $| \mathrm{det}(J\psi)|(y_i)$.\\
Using now \eqref{eq:reparameterization_limsup}, we can write
\begin{align}
    &(1+d) \sum_{i \in I_n} \int_{Q'(y_i,r_n)} \int_{-\ell_n}^{\ell_n} \left[ \frac{1}{\varepsilon_n} W(\psi(y_i), z_n^i(t)) + \varepsilon_n | (z_n^i)'(t)|^2 \right] | \mathrm{det}(J\psi)|(y_i) \dd y \dd t \nonumber \\
    &\leq (1+d) \sum_{i \in I_n} \mathcal{H}^{N-1} (Q'(y_i, r_n)) |\mathrm{det}(J\psi)|(y_i)\cdot \nonumber\\
    &\hspace{5cm}\cdot\int_{-\ell_n}^{\ell_n} \left[ \frac{1}{\varepsilon_n} W(\psi(y_i), \gamma_i(g_n^i(t))) + \varepsilon_n |\gamma_i'(g_n^i(t))|^2 |(g_n^i)'(t)|^2 \right] \dd t \nonumber \\
    &\leq (1+d) \sum_{i \in I_n} \mathcal{H}^{N-1} (Q'(y_i, r_n)) |\mathrm{det}(J\psi)|(y_i) \int_{-1}^1 2 \sqrt{W(\psi(y_i), \gamma_i(s))} | \gamma_i'(s)| \dd s \label{eq:remainder_reparameterization} \\
    &\hspace{8cm}+ (1+d) 2 \overline{L} \mathcal{H}^{N-1}(\Sigma) \varepsilon_n \nonumber
\end{align}
This last term vanishes, therefore we can conclude by estimating \eqref{eq:remainder_reparameterization} with
\begin{align*}
    &\lim_{n \to \infty} \int_{\Sigma_n} \left[ \frac{1}{\varepsilon_n} W(x,u_n) + \varepsilon_n |\nabla u_n|^2 \right] \dd x \\
    &\leq (1+d) \lim_{n \to \infty} \sum_{i \in I_n} \mathcal{H}^{N-1} (Q'(y_i, r_n)) | \mathrm{det}(J\psi)|(y_i) \int_{-1}^1 2 \sqrt{W(\psi(y_i), \gamma_i(s))} | \gamma_i'(s)| \dd s \\
    &= (1+d) \lim_{n \to \infty} \sum_{i \in I_n} \mathcal{H}^{N-1} (Q'(y_i, r_n)) | \mathrm{det}(J\psi)|(y_i) \dd_W(\psi(y_i), u^+(\psi(y_i)),u^-(\psi(y_i))) \\
    &= (1+d) \int_D \dd_W (\psi(y), u^+(\psi(y)),u^-(\psi(y))) | \mathrm{det}(J\psi)|(y) \dd \mathcal{H}^{N-1} (y),
\end{align*}
where the last step follows from the continuity of $ y \mapsto \dd_W(\psi(y), u^+(\psi(y)),u^-(\psi(y)))$ and the continuity of $y \mapsto | \mathrm{det}(J\psi)|(y)$. The proof is then concluded by letting $d \to 0$.\\[0.5em]
\textbf{Step 5: General case.} Take $u \in BV(\Omega; \{a,b\})$. By Proposition \ref{prop:approx_sets} we know that there exists a sequence of functions $\{ v_n \}_n \subset BV(\Omega; \{a,b\})$ with piecewise $C^2$ jump set, such that 
$$ v_n \to u \text{ in } L^1(\Omega; \R^M) \text{ as } n \to \infty, \qquad \lim_{n\to \infty} \mathcal{F}_\infty(v_n) = \mathcal{F}_\infty (u). $$
We can therefore apply the previous steps to the function $v_n$ to obtain a sequence of functions $\{w_m^n\}_m \subset W^{1,2}(\Omega; \R^M)$ such that  
$$ w_m^n \to v_n \text{ in } L^1(\Omega; \R^M) \text{ as } m \to \infty, \qquad \lim_{m\to \infty} \mathcal{F}_{\varepsilon_m}(w_m^n) = \mathcal{F}_\infty (v_n). $$
Using a diagonalization argument, we find a sequence $\{m_n\}_n$ such that the sequence of functions defined by $\{u_n\}_n = \{w_{m_n}^n\}_n$ has the following properties
$$ u_n \to u \text{ in } L^1(\Omega; \R^M) \text{ as } n \to \infty, \qquad \lim_{n\to \infty} \mathcal{F}_{\varepsilon_n} (u_n) = \mathcal{F}_\infty (u), $$
and we conclude.
\end{proof}

We now show that it is possible to slightly adjust the proof of Theorem \ref{thm:limsup} and obtain the recovery sequence for the mass constrained case.

\begin{theorem}\label{thm:limsup_mass}
    Assume that \ref{H1}-\ref{H8} hold, with the function $f$ from $\ref{H5}$ satisfying $f(t) \leq |t|^\alpha$ with $\alpha > 1$.
    Let $m\in\R^M$.
    Let $u \in L^1(\Omega;\R^M)$. Then, there exists $\{u_n\}_n \subset H^1(\Omega;\R^M)$ such that $u_n \to u$ in $L^1(\Omega;\R^M)$, and moreover
    \[
        \mathcal{F}^m_\infty(u) = \lim_{n \to \infty} \mathcal{F}^m_{\eps_n}(u_n).
    \]
    for every sequence $\eps_n \to 0$.
\end{theorem}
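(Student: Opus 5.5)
The plan is to start from the recovery sequence of Theorem \ref{thm:limsup} and correct its mass, following the strategy of \cite[Theorem 15]{cristoferi2025phaseseparationmultiplyperiodic}. If $\mathcal{F}^m_\infty(u)=+\infty$ there is nothing to prove. Otherwise $u\in BV(\Omega;\{a,b\})$ with $\int_\Omega u\,\dd x=m$. By Theorem \ref{thm:limsup} there is $\{v_n\}_n\subset H^1(\Omega;\R^M)$ with $v_n\to u$ in $L^1$ and $\mathcal{F}_{\varepsilon_n}(v_n)\to\mathcal{F}_\infty(u)$. In the construction, $v_n$ differs from $u$ only on the tubular neighbourhoods $\Sigma_n$ of width $2\ell_n=2\overline{L}\varepsilon_n$, on the isolated singular set $S_n$, and through the approximation of Step 5. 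Since $\|v_n\|_\infty\le C$ by \eqref{eq:vn_bounded}, the mass defect satisfies
\[
m_n\coloneqq m-\int_\Omega v_n\,\dd x,\qquad |m_n|\to 0 .
\]
If the approximation in Step 5 is arranged so that the approximating jump sets $v_n$ carry mass exactly $m$ (for instance, by a small volume adjustment of the approximating sets), then $|m_n|\le C\varepsilon_n$. Otherwise I only use $|m_n|\to0$.

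The correction is a bulk perturbation. Since $a\neq b$ a.e. and $u$ takes values in $\{a,b\}$, I fix a ball $B=B(x_0,r)\Subset\Omega_i\setminus \overline{J_u}$ (for some $i$, using \ref{H6}) on which $u=a_i$ is continuous, with $B$ at positive distance from all interfaces. For $n$ large, $v_n=u$ on $B$. I take a smooth bump $\phi\in C^\infty_c(B;[0,1])$, set $\kappa\coloneqq\int\phi>0$, and define
\[
u_n\coloneqq v_n+\frac{m_n}{\kappa}\,\phi .
\]
Then $\int_\Omega u_n=m$, $u_n\in H^1$, and $u_n\to u$ in $L^1$. The Dirichlet part of the extra energy is
\[
\varepsilon_n\int_B|\nabla u_n|^2\le 2\varepsilon_n\int_B|\nabla a_i|^2+C\varepsilon_n |m_n|^2\to0 .
\]

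The main obstacle is the potential term $\frac1{\varepsilon_n}\int_B W(x,a_i+\frac{m_n}{\kappa}\phi)\,\dd x$. By \ref{H4}, this is at most
\[
\frac{C}{\varepsilon_n}\, f\!\left(\frac{|m_n|}{\kappa}\right)|B| .
\]
Here the extra hypothesis $f(t)\le t^\alpha$ with $\alpha>1$ near the wells is used. If $|m_n|\le C\varepsilon_n$, the bound becomes $C\varepsilon_n^{\alpha-1}\to0$, which settles the claim.

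So the crux is proving $|m_n|=O(\varepsilon_n)$ for the construction of Theorem \ref{thm:limsup}. I would obtain it as follows.
\begin{enumerate}
\item On $\Sigma_n$, $|v_n-u|\le C$ on a set of measure $O(\ell_n)=O(\varepsilon_n)$.
\item On $S_n$, the measure is $O(\varepsilon_n^2)$.
\item For the Step 5 approximation, I diagonalise by choosing the approximating set $v$ with $|\int v-m|\le\varepsilon_n$, so that the approximation error is also $O(\varepsilon_n)$.
\end{enumerate}

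If exact $O(\varepsilon_n)$ control of the approximation is hard, the alternative is to spread the correction over a larger set. I would replace $\phi$ by a profile supported on a fixed-size region with amplitude $|m_n|$. The $W$-cost is then $\varepsilon_n^{-1}|m_n|^\alpha$, and it suffices to choose the diagonal index so that $|m_n|\le \varepsilon_n^{1/\alpha}\,o(1)$, which is always possible since the Step 5 error can be made arbitrarily small independently of $\varepsilon_n$.
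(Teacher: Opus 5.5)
\textbf{There is a genuine gap: you never make the mass defect $m_n$ small enough.} The bump correction itself is fine. A fixed-size bump costs $O(\varepsilon_n^{-1}|m_n|^\alpha)$, which is even a bit cleaner than the paper's shrinking cone with $r_n=\varepsilon_n^\beta$, $\beta<1/N$. The problem is the step that is supposed to make $|m_n|$ small enough.

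Let $v^k$ be an approximant with piecewise $C^2$ jump set and mass defect $\delta_k\coloneqq|\int_\Omega v^k\dd x-m|>0$. The recovery sequence $w^k_n$ of Theorem \ref{thm:limsup} then has mass defect $\delta_k+O(\varepsilon_n)$. For fixed $k$, your bound for the correction is of order $\delta_k^\alpha/\varepsilon_n$, which diverges. So the diagonal index $k(n)$ must grow \emph{fast enough} that $\delta_{k(n)}\le\varepsilon_n$, or $\delta_{k(n)}=o(\varepsilon_n^{1/\alpha})$ in your fallback.

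On the other hand, Theorem \ref{thm:limsup} only gives $\mathcal{F}_{\varepsilon_n}(w^k_n)\to\mathcal{F}_\infty(v^k)$ for each \emph{fixed} $k$. The rate depends on $k$ in an uncontrolled way:
\begin{itemize}
\item the width $\overline{L}\varepsilon_n$ of the tubular neighbourhood must stay below the reach of the $C^2$ interface of $v^k$;
\item the moduli $\omega_\Psi$ and $\omega_{|\det J\Psi|}$ depend on $k$;
\item the constant $\mathcal{H}^{N-2}(S)$ controlling $\mathcal{L}^N(S_n)$ depends on $k$.
\end{itemize}
So $k(n)$ must also grow \emph{slowly enough}, and nothing makes the two requirements compatible. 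Your claim that the Step 5 error ``can be made arbitrarily small independently of $\varepsilon_n$'' is true for the mass alone but ignores this energy constraint. Spreading the bump over a fixed region changes nothing, since your $\phi$ is already of fixed size.

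\textbf{Missing idea: approximants of exact mass.} You name the right condition in your first paragraph, but only as a hypothesis. The paper's Step 1 (following Murray) carries it out:
\begin{itemize}
\item replace $A=\{u=a\}$ by $D_k=(A\cup B(x_0,1/k))\setminus B(x_1,1/k)$, where $x_0,x_1$ are points of density $0$ and $1$ of $A$;
\item approximate $D_k$ via Proposition \ref{prop:approx_sets};
\item remove or add a small ball near $x_0$ or $x_1$ to restore the mass exactly, at a perimeter cost that vanishes as $k\to\infty$.
\end{itemize}
With $\delta_k=0$, your items (1)--(2) give $|m_n|\le C\varepsilon_n$ for each fixed approximant, uniformly in the relevant constants. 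Your bump correction then costs $O(\varepsilon_n^{\alpha-1})$. Only after correcting do you diagonalise, as in Step 5 of Theorem \ref{thm:limsup}, and then only slow growth of $k(n)$ is needed.

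\textbf{The fixed ball $B$ also needs repairing, and the same fix handles it.} For a general $u\in BV(\Omega;\{a,b\})$ there need not be any ball on which $u$ equals a well: a set of finite perimeter $E$ can satisfy $0<|E\cap B|<|B|$ for every ball $B$. Moreover, Proposition \ref{prop:approx_sets} does not say the approximants agree with $u$ on a fixed ball. For a piecewise-$C^2$ approximant, a ball where it equals a well, at positive distance from its interfaces, always exists.

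\textbf{Caveat for the exact-mass adjustment.} Since the wells depend on $x$, the quantity to match is the vector $\int_F(a-b)\dd x\in\R^M$, not merely $|F|$. For $M\ge 2$ a single volume adjustment is therefore not enough in general, so the ball adjustment has to be done with this in mind.
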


\begin{proof}
The compactness and liminf proof work without any modifications. We only need to slightly modify the limsup proof, and this revolves around modifying the jump set approximation and the recovery sequence so that they satisfy the mass constraint.\\[0.5em]
\textbf{Step 1: Fixing the approximation.} We now have $u \in \mathrm{BV}(\Omega; \{ a,b\})$ with 
$$ \int_\Omega u \dd x = m a + (1-m) b. $$
Then, defining as before $A \coloneqq \{ u = a \}$, this implies that $|A| = m$. Since this is a set of finite perimeter, thanks to Proposition \ref{prop:approx_sets} we approximated it with a sequence $\{A_n\}_n$ of appropriate $C^2$ sets. The problem here is that we did not require these sets to satisfy the constraint $|A_n|=m$ for every $n \in \N$. Therefore, this is the first change that needs to be addressed: we thus follow an idea originally due to Ryan Murray, appropriately modified for our case (see \cite{murray2016slow}).

Since $A$ is a set of finite perimeter, we can take its reduced boundary $\partial^* A$. Let us now take $x_0, x_1 \in \Omega$ be points of density $0$ and $1$ respectively for $A$, and for $k \in \N$ define 
$$ D_k \coloneqq \left( A \cup B \! \left( \! x_0, \frac{1}{k} \! \right) \right) \setminus B \! \left( \! x_1, \frac{1}{k} \! \right). $$

It is possible to see that
$$ \mathbbm{1}_{D_k} \to \mathbbm{1}_A \quad \text{in } L^1(\Omega; \R) \quad \text{as } k \to \infty. $$
Moreover, we can check that
\begin{align*}
	\left| D \mathbbm{1}_{D_k} \right| (\Omega) &= \mathcal{H}^{N-1} \left(\Omega \cap \partial^* D_k \right) \\
	&\leq \mathcal{H}^{N-1} \left(\Omega \cap \partial^* A \right) + \mathcal{H}^{N-1} \left(\Omega \cap \partial B \! \left( \! x_1, \frac{1}{k} \! \right) \right) + \mathcal{H}^{N-1} \left(\Omega \cap \partial B \! \left( \! x_2, \frac{1}{k} \! \right) \right) \\
	&\to \mathcal{H}^{N-1} \left(\Omega \cap \partial^* A \right) \\
	&= \left| D \mathbbm{1}_{A} \right| (\Omega).
\end{align*}
Therefore we also have
$$ \lim_{k \to \infty} \mathrm{Per}(D_k;\Omega) = \mathrm{Per}(A;\Omega). $$
By definition of density for $x_0, x_1$ with respect to $A$, we have
$$ \lim_{r \to 0} \frac{|A \cap B(x_0,r)|}{|B(x_0,r)|} = 0, \qquad \lim_{r \to 0} \frac{|A \cap B(x_1,r)|}{|B(x_1,r)|} = 1. $$
Therefore, this implies that there exists $k$ large enough such that
$$ \left| A \cap B \left( x_0, \frac{1}{k} \right) \right| \leq \frac{1}{4} \left| B \left( x_0, \frac{1}{k} \right) \right|, \qquad \left| A \cap B \left( x_1, \frac{1}{k} \right) \right| \geq \frac{3}{4} \left| B \left( x_1, \frac{1}{k} \right) \right|. $$
Let now $(D_k^n)_n$ be the sequence of $C^2$ sets obtained by applying Proposition \ref{prop:approx_sets} to the set $D_k$. Since we have convergence in perimeter and $L^1$, this implies that for a fixed $k$, there exists $\widetilde{n}_1(k) \in \N$ such that for every $n \geq \widetilde{n}_1(k)$ we have
$$ \left| \mathrm{Per} \left( D_k; \Omega \right) - \mathrm{Per} \left( D_k^n; \Omega \right) \right| \leq \frac{1}{k}, \qquad \int_\Omega \left| \mathbbm{1}_{D_k} (x) - \mathbbm{1}_{D_k^n}(x) \right| \dd x \leq \frac{1}{k}. $$
Since these sets $D_k^n$ are obtained by using a standard mollifying procedure and taking a super-level set, we know that there exists $\widetilde{n}_2(k) \in \N$ such that for every $n \geq \max \{ \widetilde{n}_1(k), \widetilde{n}_2(k) \}$ we have both the previous inequalities and also
$$ B \left( x_0, \left( \frac{4}{5} \right)^{\frac{1}{N}} \frac{1}{k} \right) \subset D_k^n, \qquad B \left( x_1, \left( \frac{4}{5} \right)^{\frac{1}{N}} \frac{1}{k} \right) \subset \Omega \setminus D_k^n. $$
This holds because 
$$ \left( \frac{4}{5} \right)^{\frac{1}{N}} < 1 \quad \forall N \in \N. $$

We have now three cases to distinguish between. If $| D_k^n | = m$ then we do not have to do anything. Assume now that $ | D_k^n | > m $. Define $r_k^n >0$ to be the radius such that
$$ \left| B \left( x_0, r_k^n \right) \right| = \left| D_k^n \right| - m > 0, $$
and define
$$ A_k^n \coloneqq D_k^n \setminus B \left( x_0, r_k^n \right). $$
We now want to prove that $$ r_k^n < \left( \frac{4}{5} \right)^\frac{1}{N} \frac{1}{k}. $$
Since we know that $|A|=m$ and
$$ \left| A \cap B \left( x_0, \frac{1}{k} \right) \right| \geq 0, \qquad \left| A \cap B \left( x_1, \frac{1}{k} \right) \right| \geq \frac{3}{4} \left| B \left( x_1, \frac{1}{k} \right) \right|, $$
we have
\begin{align*}
	\left| D_k \right| &= |A| + \left| B \left( x_0, \frac{1}{k} \right) \setminus A \right| - \left| B \left( x_1, \frac{1}{k} \right) \cap A \right| \\
	&\leq |A| + \left| B \left( x_0, \frac{1}{k} \right) \right| - \left| B \left( x_0, \frac{1}{k} \right) \cap A \right| - \left| B \left( x_1, \frac{1}{k} \right) \cap A \right| \\
	&\leq |A| + \left| B \left( x_0, \frac{1}{k} \right) \right| - \frac{3}{4} \left| B \left( x_1, \frac{1}{k} \right) \right| \\
	&= |A| + \frac{1}{4} \left| B \left( x_0, \frac{1}{k} \right) \right| \\
	&= m + \left| B \left( x_0, \left( \frac{1}{4} \right)^\frac{1}{N} \frac{1}{k} \right) \right|.
\end{align*}
Therefore, for $n$ large enough we also get 
$$ \left| B \left( x_0, r_k^n \right) \right| = \left| D_k^n \right| - m \leq \left| B \left( x_0, \left( \frac{1}{4} \right)^\frac{1}{N} \frac{1}{k} \right) \right| < \left| B \left( x_0, \left( \frac{4}{5} \right)^\frac{1}{N} \frac{1}{k} \right) \right|, $$
therefore we got the estimate on $r_k^n$.
This in particular implies that the set $A_k^n$ is also $C^2$, and it follows that
$$ |A_k^n| = |D_k^n| - \left| B \left( x_0, r_k^n \right) \right| = |D_k^n| - |D_k^n| + m = m. $$
The case of $|D_k^n| < m$ follows analogously. 

Therefore we just proved that for every $k$ we can modify the sequence of sets obtained from Proposition \ref{prop:approx_sets} applied to $D_k$ such that every element of the sequence satisfies the mass constraint. Using a diagonal argument, we can therefore obtain the desired conclusion.\\[0.5em]
\textbf{Step 2: Fixing the recovery sequence.}
Let $u_n$ be defined as in \eqref{eq:recovery_sequence}. In general it is not true that this function satisfies the mass constraint, therefore we need to modify it accordingly.\\
Let $N \geq 2$ and define
$$ m_n \coloneqq \int_\Omega u_n (x) \dd x. $$
If, for a given $n \in \N$, we have $m_n = m$, then we are done. Let us suppose that $m_n \neq m$. 
Let us recall the definition of $\Sigma_n$:
$$ \Sigma_n \coloneqq \{ x \in \Omega : | \mathrm{dist}(x, \Sigma) | \leq \tau_n \}, $$
which is the set where $u_n(x) \notin \{a, b\}$. 

Let $x_0 \in \Omega \setminus \Sigma_n$ such that $u_n (x) = u(x) = a(x)$ in a neighborhood of $x_0$. Let now $(r_n)_{n \in \N}$ be an infinitesimal sequence and define $B_n \coloneqq B (x_0, r_n)$ to be such neighborhood. Since $r_n \leq \mathrm{dist}(x_0,\Sigma_n) $, we can modify $u_n$ as follows
\begin{gather*}
    v_n (x) \coloneqq \begin{cases}
        u_n(x) \qquad &x \in \Omega \setminus B_n, \\
        a(x) + c_n (m_n - m) \left( 1 - \frac{|x - x_0|}{r_n} \right) \qquad &x \in B_n,
    \end{cases}
\end{gather*}
where $c_n \in \R$ is to be determined by enforcing the mass constraint.

Therefore we must have
\begin{align*}
    m &= \int_\Omega v_n (x) \dd x \\
    &= \int_{\Omega \setminus B_n} u_n(x) \dd x + \int_{B_n} \left[ a(x) + c_n (m_n - m) \left( 1 - \frac{|x - x_0|}{r_n} \right) \right] \dd x \\
    &= \int_\Omega u_n (x) \dd x - \int_{B_n} u_n (x) \dd x + \int_{B_n} a(x) \dd x + c_n (m_n - m) \int_{B_n} \left[ 1 - \frac{|x - x_0|}{r_n}\right] \dd x \\
    &= m_n + c_n (m_n - m) N \omega_N r_n^N \int_0^1 ( 1 - s) s^{N-1} \dd s \\
    &= m_n + c_n (m_n - m) r_n^N \frac{\omega_N}{N+1},
\end{align*}
which implies
$$ c_n = - \frac{N+1}{\omega_N} \ \frac{1}{r_n^N}.$$
Therefore with this choice of $c_n$, the function $v_n$ satisfies the mass constraint. 
Moreover, we also have
$$ | m_n - m| \leq \int_\Omega |u_n - u| \dd x = \int_{\Sigma_n} |u_n - u| \dd x \leq |b-a| \ln(\Sigma_n) \leq C \varepsilon_n \mathcal{H}^{N-1}(\Sigma). $$
We need to check that $v_n$ still converges in $L^1$, and that is does not change the energy in the limit. Checking the $L^1$ convergence is easy, since the way we chose the constant $c_n$ implies
$$ \left| \int_{B_n} c_n (m_n - m) \left( 1 - \frac{|x - x_0|}{r_n} \right) \dd x  \right| = | m - m_n | \leq C \varepsilon_n \to 0. $$
For the energy convergence, we need to check that
$$ \lim_{n \to \infty} F_n^{(1)} (v_n, B_n) = 0. $$
Let us check first the potential energy term: we can use Assumption \ref{H5} with $f(t) \leq t^\alpha$ and $\alpha > 1$ to see that
\begin{align}
	\int_{B_n} \frac{1}{\varepsilon_n} W \left( x, v_n(x) \right) \dd x &\leq \int_{B_n} \frac{1}{\varepsilon_n} | v_n(x) - a(x)|^\alpha \dd x \\
    &\leq C \frac{r_n^N}{\varepsilon_n} | c_n |^\alpha |m_n - m|^\alpha \\
    &\leq C \left( \frac{\varepsilon_n}{r_n^N} \right)^\alpha.
\end{align}
Therefore, for this term to vanish in the limit, it is enough to have 
$$ r_n = \varepsilon_n^\beta, \qquad 0 < \beta < \frac{1}{N}, $$
and we conclude.
\end{proof}

\begin{remark}
    The necessary assumption on the wells, namely \ref{H5} with $f(t) \leq |t|^\alpha$ for a $\alpha > 1$, implies uniform differentiability of $p \mapsto W(x,p)$ for $p \in \{a(x), b(x)\}$.
\end{remark}

\begin{remark}
    Note that a similar procedure has been carried out in \cite{cristoferi2025phaseseparationmultiplyperiodic} for fixing the approximation, with the difference that the points $x_0, x_1$ lay on the reduced boundary. Both methods work, but for our setting it was easier to have the points lay outside of the reduced boundary.
\end{remark}


\appendix
\section{Auxiliary results}

Finally, we prove two technical results that we will need in the proof of the liminf inequality (see Theorem \ref{thm:liminf}).

\begin{lemma}\label{lemma:technical}
        Suppose that $g: \R^N \times \R^M \to [0, \infty)$ is continuous and there exist $p \in \R^N, (p_n) \subset \R^N$ and $L_n \in \R^{m \times m}$ with $p_n \to p$ in $\R^N$ and $L_n \to Id_m$ in $\R^{m \times m}$. Then,
        \[
            \inf_{r \in \R^M} g(p_n, L_n r) \xrightarrow{n \to \infty} \inf_{r \in \R^M} f(p, r).
        \]
    \end{lemma}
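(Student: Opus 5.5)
The plan is to remove the matrices $L_n$ first and then compare infima of continuous functions. I read the right-hand side of the statement as $\inf_{r\in\R^M} g(p,r)$; the $f$ there appears to be a typo for $g$. Since $L_n\to Id_M$ and the invertible matrices form an open set, $L_n$ is invertible for all $n$ large enough. For such $n$ the map $r\mapsto L_n r$ is a bijection of $\R^M$, so
\[
\inf_{r\in\R^M} g(p_n,L_n r)=\inf_{s\in\R^M} g(p_n,s).
\]
It therefore suffices to prove that $m(p_n)\to m(p)$, where $m(q)\coloneqq\inf_{s\in\R^M} g(q,s)$.

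\textbf{Upper bound.} Fix $\eta>0$ and choose $s_\eta$ with $g(p,s_\eta)\le m(p)+\eta$. By continuity of $g$ we have $g(p_n,s_\eta)\to g(p,s_\eta)$. Hence
\[
\limsup_n m(p_n)\le \lim_n g(p_n,s_\eta)\le m(p)+\eta,
\]
and letting $\eta\to0$ gives $\limsup_n m(p_n)\le m(p)$. This direction uses only pointwise continuity.

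\textbf{Lower bound.} This is the main obstacle. An infimum over the non-compact set $\R^M$ is in general only upper semicontinuous in the parameter. For instance, with $g(q,s)=1$ for $q\le 0$ and $g(q,s)=\max\{0,1-qs\}$ for $q>0$, the infima along $q\downarrow 0$ are $0$ while the value at $0$ is $1$, so the lemma fails for arbitrary continuous $g$. The proof therefore needs, and I would add as a hypothesis, the condition that is available in the application: there is a compact ball $B_K(0)$ such that $\inf_{\R^M} g(q,\cdot)=\inf_{B_K(0)} g(q,\cdot)$ for all $q$ in a neighbourhood of $p$.

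In the proof of Theorem \ref{thm:liminf} this condition is supplied by Proposition \ref{prop:infimum_attained_in_ball}, because the points $w_m^\pm(\rho_m x')$ stay uniformly bounded. It is also robust under applying $L_n$: the sets $L_n^{-1}B_K(0)$ lie in $B_{2K}(0)$ once $\|L_n-Id_M\|$ is small.

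Under this hypothesis, $g$ is uniformly continuous on the compact set $\overline{B_1(p)}\times\overline{B_{K}(0)}$. Hence
\[
\sup_{s\in B_K(0)}|g(p_n,s)-g(p,s)|\to0,
\]
and therefore
\[
|m(p_n)-m(p)|\le \sup_{s\in B_K(0)}|g(p_n,s)-g(p,s)|\to 0.
\]
This gives the full convergence, not only the lower bound.

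A fully equivalent alternative is to avoid the change of variables $s=L_n r$ and directly use joint uniform continuity of $(q,r)\mapsto g(q,Lr)$ on a compact set of $(q,L,r)$. The compactness hypothesis on the location of the infimum is still the essential input in that route.
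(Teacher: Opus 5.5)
Your proposal is correct, and your counterexample is valid. The function $g$ is jointly continuous, since $qs\to0$ as $(q,s)\to(0,s_0)$ with $q>0$. Moreover $\inf_s g(1/n,s)=0$ for every $n$, while $\inf_s g(0,s)=1$. So Lemma~\ref{lemma:technical} is false as stated, already for $N=M=1$ and $L_n=Id$.

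This also pinpoints the gap in the paper's proof. The paper shows $G_n(r_n)\to G(r)$ whenever $r_n\to r$, concludes $G_n\xrightarrow{\Gamma}G$, and then appeals to the fundamental property of $\Gamma$-convergence. That property gives convergence of infima only for equicoercive families. On the non-compact space $\R^M$, continuous convergence yields only $\limsup_n\inf G_n\le\inf G$, which is exactly your upper bound. Your example, where $G_n(s)=\max\{0,1-s/n\}$ converges continuously to $G\equiv1$, is continuously convergent but not equicoercive.

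Your added hypothesis (near-minimizers stay in a fixed ball for all parameters near $p$) is precisely the missing equicoercivity. With it, either the paper's $\Gamma$-convergence argument or your estimate $|m(p_n)-m(p)|\le\sup_{B_K(0)}|g(p_n,\cdot)-g(p,\cdot)|$ closes the proof; yours is the more elementary and self-contained of the two. The hypothesis is also what the application actually provides, through Proposition~\ref{prop:infimum_attained_in_ball} and the uniform $L^\infty$ bounds on $w_m^\pm$.

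One refinement concerns how the lemma is used. In the proof of Theorem~\ref{thm:liminf} it is applied with two different matrices $L^{j,m}_{\rho_m x'}$ and $L^{i,m}_{\rho_m x'}$ acting on the same variable $r$. The substitution $s=L_n r$ therefore cannot remove both; it leaves $L^{i,m}_{\rho_m x'}(L^{j,m}_{\rho_m x'})^{-1}$. There the route you list as an alternative is the one actually needed, so it is not merely equivalent:
\begin{itemize}
\item treat the matrices as part of the parameter;
\item use that $\|L-Id_M\|\le 1/2$ gives $\|L^{-1}\|\le 2$, so that, by the argument of Proposition~\ref{prop:infimum_attained_in_ball}, near-minimizing $r$ lie in a fixed ball;
\item conclude by joint uniform continuity on a compact set.
\end{itemize}
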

    
    \begin{proof}
        Consider the functionals
        \[
            G_n(r) := g(p_n, L_n r),
        \]
        \[
            G(r) := g(p, r).
        \]
        Let $r_n \to r$. By continuity 
        \[
            F_n(r_n) = f(p_n, L_n r_n) \xrightarrow{n \to \infty} f(p, r) = F(r),
        \]
        which, in particular, implies that the $\Gamma$-limit of $G_n$ is $G$. By the fundamental property of $\Gamma$-convergence, the infimum of $G_n$ converges to the infimum of $G$. This concludes the proof.
    \end{proof}
    
    \begin{lemma}\label{lemma:technical_approx_limits}
        Let $u \in BV(\Omega; \R^M)$, $0 \in J_u$ with $\nu_{0} = e_n$, and $F \subset (0,1)$ be a set with $|F| = 1$. Then, there exists a sequence $\{\rho_m\} \subset F$ with $\rho_m \to 0^+$ such that 
        \[
            \H^{N-1}(J_u \cap \partial (-\rho_m/2, \rho_m/2)) = 0,
        \]
        and for the rescaled trace $u|_{\partial Q_{\rho_m}(x_0)}$ restricted to $(-\rho_m/2, \rho_m/2)\times \{\pm \rho_m\}$ we have 
        \[
            u(\rho_m \cdot, \pm \rho_m/2) \to u^\pm(x_0)
        \]
        in $L^1((-1/2, 1/2)^{N-1})$.
    \end{lemma}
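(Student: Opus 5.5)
The statement to prove is Lemma~\ref{lemma:technical_approx_limits}: for $u\in BV(\Omega;\R^M)$ with $0\in J_u$, $\nu_0=e_N$, and a full-measure set $F\subset(0,1)$, we must find radii $\rho_m\in F$, $\rho_m\to0$, such that $J_u$ does not charge the boundary of the cube $Q(\rho_m)$ and the rescaled traces on the top and bottom faces converge to $u^\pm(0)$. The plan is a Fubini-type argument in the normal variable, combined with the definition of approximate jump point.

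\textbf{Blow-ups.} Set $u_\rho(z)\coloneqq u(\rho z)$ on $Q=Q(1)$. Since $0\in J_u$ with normal $e_N$, we have $u_\rho\to u_0$ in $L^1(Q;\R^M)$ as $\rho\to0$, where $u_0=u^+(0)$ on $\{z_N>0\}$ and $u_0=u^-(0)$ on $\{z_N<0\}$. Restricting to the slab $\{1/4<z_N<1/2\}$ gives
\[
\int_{1/4}^{1/2}\int_{Q'}|u(\rho z',\rho s)-u^+(0)|\,\dd z'\,\dd s\to0 .
\]
Changing variables $t=\rho s$ turns this into
\[
\frac{1}{\rho}\int_{\rho/4}^{\rho/2}\phi(t/\rho)\,\dd t\to0,
\qquad
\phi_\rho(s)\coloneqq \int_{Q'}|u(\rho z',\rho s)-u^+(0)|\,\dd z'.
\]
The analogous statement holds for $u^-(0)$ on the slab $\{-1/2<z_N<-1/4\}$.

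\textbf{Choice of radii.} Fix $\rho$ and work at scale $\rho$. The set of $s\in(1/4,1/2)$ with $\phi_\rho(s)>\sqrt{\omega(\rho)}$, where $\omega(\rho)$ denotes the slab integral above, has measure at most $\sqrt{\omega(\rho)}$ by Chebyshev. Replacing $\rho$ by $\rho'=2s\rho$ moves the face to height $\rho'/2$, but it also rescales $z'$: the cross-section $Q'(\rho)$ corresponds to a portion of $Q'(\rho')$ of proportion at least $1/2$ in each direction. This mismatch is awkward, so I would instead use full cubes directly.

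Consider
\[
\Phi(\rho)\coloneqq \rho^{1-N}\int_{Q'(\rho)}\bigl|u(x',\rho/2)-u^+(0)\bigr|\,\dd x' .
\]
Then $\int_{r}^{2r}\Phi(\rho)\,\dd\rho/r$ is controlled by
\[
C r^{-N}\int_{Q(4r)\cap\{x_N>r/2\}}|u-u^+(0)|,
\]
since the map $(x',\rho)\mapsto(x',\rho/2)$ covers each point with bounded multiplicity and $|x'|_\infty<\rho/2\le r$. The right-hand side tends to $0$ by the approximate jump condition. The same holds for the bottom-face quantity $\Phi^-$.

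For each dyadic $r=2^{-m}$ I then pick $\rho_m\in(r,2r)$ lying in the intersection of four sets:
\begin{itemize}
\item $F$, which has full measure;
\item the complement of the countable set of $\rho$ with $\H^{N-1}(J_u\cap\partial Q(\rho))>0$;
\item the set of $\rho$ at which the trace of $u$ on $\partial Q(\rho)$ coincides with the slice restriction, which holds for a.e.\ $\rho$ by BV slicing;
\item the set where $\Phi(\rho)+\Phi^-(\rho)\le 2\cdot\frac1r\int_r^{2r}(\Phi+\Phi^-)$, which has measure at least $r/2$ by Chebyshev.
\end{itemize}
Since this intersection has positive measure, such a $\rho_m$ exists. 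By construction $\Phi(\rho_m)\to0$ and $\Phi^-(\rho_m)\to0$, which is exactly the claimed $L^1(Q')$ convergence after the substitution $x'=\rho_m z'$.

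\textbf{Main obstacle.} The delicate point is the measure-theoretic bookkeeping. The traces on faces have to be genuine BV traces agreeing with the pointwise slice values for a.e.\ $\rho$, and the multiplicity estimate in the Fubini step needs care. I would handle both through the slicing theorem, using that for a.e.\ $t$ the slice $u(\cdot,t)$ equals the one-sided trace on $\{x_N=t\}$ \cite[Sect.~3.11]{AFP}. The countability of the exceptional radii follows because the sets $J_u\cap\partial Q(\rho)$ are essentially disjoint for distinct $\rho$ and $\H^{N-1}\restr J_u$ is $\sigma$-finite.
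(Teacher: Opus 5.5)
Your proof is correct, but it takes a different route from the paper's.

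\textbf{The paper's route.} From the one-sided approximate limit and Fubini, the paper gets $\int_0^1\int_{Q'}|u(\rho x',\rho t)-u^+(0)|\,\dd x'\,\dd t\to0$. From this it claims that, for a full-measure set of heights $t$, the face integrals at height $\rho t$ tend to zero. It then passes to the radii $\rho_m=t_m\tilde\rho_m$. The smaller cross-section is absorbed by the factor $t^{1-N}$, and $t_m$ is chosen so that $\rho_m\in F$.

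\textbf{Your route.} You average the normalized face quantity $\Phi+\Phi^-$ over each dyadic shell $(r,2r)$. You bound this average by $2r^{-N}\int_{Q'(2r)\times(r/2,r)}|u-u^+(0)|$ plus the symmetric term, which vanishes by the jump condition. Chebyshev then gives, in every shell, a set of good radii of measure at least $r/2$. You intersect it with $F$, with the co-countable set where $\H^{N-1}(J_u\cap\partial Q(\rho))=0$, and with the a.e.\ set where trace and slice agree.

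\textbf{What each buys.} Your selection supplies the uniformity that the paper's argument glosses over. $L^1$ convergence in $t$ yields a.e.\ convergence only along a subsequence, and only for a fixed $t$. With a varying $t_m$, the paper's step does not directly give $\Phi(\rho_m)\to0$. To make it rigorous, one must extract a subsequence and fix a single $t$ that works for all $m$ at once: $t\tilde\rho_m\in F$ and $t\tilde\rho_m$ outside the countable exceptional set. Your bound $\Phi(\rho_m)+\Phi^-(\rho_m)\le \frac{2}{r}\int_r^{2r}(\Phi+\Phi^-)$ sidesteps this bookkeeping. It also works directly with the centered cubes of the statement, rather than the corner cubes $(0,1)^N$ used in the proof. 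The paper's version, once repaired, can start from any prescribed sequence $\tilde\rho_m$, but this flexibility is not needed.

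\textbf{Minor remarks.} Your abandoned first attempt would also have worked. With $\rho'=2s\rho\le\rho$ one has $Q'(\rho')\subset Q'(\rho)$, so the new face integral is at most $(\rho/\rho')^{N-1}\le 2^{N-1}$ times the old one; this is exactly how the paper handles the mismatch. The map $(x',\rho)\mapsto(x',\rho/2)$ is injective, not merely of bounded multiplicity. The boundaries $\partial Q(\rho)$ are genuinely disjoint, not just essentially disjoint.
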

    
    \begin{proof}
        Since $u^\pm(x_0)$ are one side approximate limits we have 
        \[
            \lim_{\rho \to 0^+} \int_{(0, 1)^N} |u(\rho x) - u^+(x_0)| \, dx = 0
        \]
        and
        \[
            \lim_{\rho \to 0^+} \int_{(-1, 0)^N} |u(\rho x) - u^-(x_0)| \, dx = 0.
        \]
        In particular, we have by Fubini's Theorem
        \[
            \lim_{\rho \to 0} \int_0^1 \int_{(0, 1)^{N}} |u(\rho x', \rho t) - u^+(x_0)| \, dx' \, dt= 0,
        \]
        where $u(\rho x', \rho t)$ agrees with the two-sided trace at $\partial (0,1)^N$ of $u$ that exists since 
        \begin{align}\label{eq:eligible_rho}
            \H^{N-1}(J_u \cap \partial (-\rho/2, \rho/2)) = 0
        \end{align}
        for all but countably many $\rho > 0$. In particular, for a set $E_\rho \subset (0,1)$ of full measure we obtain $t \in E_\rho$
        \[
            \lim_{\rho \to 0} \int_{(0, 1)^{N-1}} |u(\rho x', \rho t) - u^+(x_0)| \, dy' = 0,
        \]
        and, therefore, it follows that 
        \[
            \lim_{\rho \to 0} \int_{(0, 1)^{N-1}} |u(\rho t x', \rho t) - u^+(x_0)| \, dy' = \lim_{\rho \to 0} \frac1 {t^{N-1}} \int_{(0, t)^{N-1}} |u(\rho x', \rho t) - u^+(x_0)| \, dy' = 0.
        \]
        Now, choose a sequence $\{ \tilde \rho_m\} \subset (0,1)$ with $\tilde \rho_m \to 0$ for which \eqref{eq:eligible_rho} holds. Then, with an $t_m \in E_{\rho_m}$ such that $t_m\rho_m \in F$ holds, the sequence $\rho_m := t_m \tilde \rho_m$ satisfies the sought-after requirements.
    \end{proof}


\subsection*{Acknowledgment}
The authors were partially supported under NWO-OCENW.M.21.336, MATHEMAMI - Mathematical Analysis of phase Transitions in HEterogeneous MAterials with Materials Inclusions.


\bibliographystyle{siam}
\bibliography{Bibliography}

\end{document}

%% file: Packages.tex
\usepackage[utf8]{inputenc}
\usepackage[T1]{fontenc}
\DeclareUnicodeCharacter{03C8}{\ensuremath{\psi}}
\usepackage[english]{babel}
\usepackage{latexsym,amsmath,amssymb,amsfonts}
\usepackage{dsfont}
\usepackage{color}
\usepackage{graphicx}
\usepackage{float}
\usepackage{esint}
\usepackage{bm}
\usepackage{mathtools}
\usepackage[top=1in, bottom=1.25in, left=1in, right=1in]{geometry}
\usepackage{enumitem}
\usepackage{hyperref}
\usepackage{cleveref}
\usepackage{bbm}
\usepackage{tikz}                        
\usepackage{pgfplots}    
\usepackage{soul}         
\usepackage{mathrsfs}

\hypersetup{
  colorlinks = true,
  linkcolor = blue,
  citecolor = red
}

\theoremstyle{plain}
\begingroup
\newtheorem{theorem}{Theorem}[section]
\newtheorem{lemma}[theorem]{Lemma}
\newtheorem{proposition}[theorem]{Proposition}
\newtheorem{corollary}[theorem]{Corollary}
\endgroup
\theoremstyle{definition}
\begingroup
\newtheorem{definition}[theorem]{Definition}
\newtheorem{remark}[theorem]{Remark}
\newtheorem{example}[theorem]{Example}

\endgroup
\theoremstyle{remark}


%% file: Commands.tex
\newcommand{\N}{\mathbb N}

\newcommand{\R}{\mathbb R}

\newcommand{\M}{\mathbb M}
\newcommand{\T}{\mathcal T}

\newcommand{\eps}{\varepsilon}
\DeclareMathOperator{\spt}{spt}

\newcommand{\ca}{\mathbb{1}}

\renewcommand{\ln}{{\mathcal{L}^N}}

\newcommand{\hno}{{\mathcal H}^{N-1}}

\renewcommand{\H}{\mathcal H}

\newcommand{\dd}{\,\mathrm{d}}

\newcommand{\dhno}{\,\dd{\mathcal H}^{N-1}}

\newcommand{\e}{\varepsilon}

\renewcommand{\o}{\Omega}

\def\ca{\mathbbmss{1}}
\renewcommand{\arg}{\mathrm{arg}}

\newcommand{\restr}{%
  \,\raisebox{-.127ex}{\reflectbox{\rotatebox[origin=br]{-90}{$\lnot$}}}\,%
}

\newcommand{\average}{{\mathchoice {\kern1ex\vcenter{\hrule height.4pt
width 6pt
depth0pt} \kern-9.7pt} {\kern1ex\vcenter{\hrule height.4pt width 4.3pt
depth0pt}
\kern-7pt} {} {} }}